\numberwithin{equation}{section}
\newtheorem{theorem}[equation]{Theorem}
\newtheorem{proposition}[equation]{Proposition}
\newtheorem{lemma}[equation]{Lemma}
\newtheorem{corollary}[equation]{Corollary}
\theoremstyle{definition}
\newtheorem{example}[equation]{Example}
\theoremstyle{remark}
\DeclareMathOperator{\Av}{A\!v}
\DeclareMathOperator{\av}{av}
\DeclareMathOperator{\dist}{dist}
\DeclareMathOperator{\Dom}{\mathcal D}
\DeclareMathOperator{\Exp}{Exp}
\DeclareMathOperator{\Fix}{Fix}
\DeclareMathOperator{\frakD}{\mathfrak D}
\DeclareMathOperator{\op}{op}
\DeclareMathOperator{\spec}{spec}
\DeclareMathOperator{\supp}{supp}
\DeclareMathOperator{\rg}{rg}
\DeclareMathOperator{\sym}{\pmb \sigma}
\DeclareMathOperator{\tr}{tr}
\DeclareMathOperator{\Tr}{Tr}
\DeclareMathOperator{\WF}{WF}
\def\calA{\mathcal A}
\def\B{\mathcal B}
\def\E{\mathcal E}
\def\F{\mathcal F}
\def\G{\mathcal G}
\def\Hor{\mathcal H}
\def\calI{\mathcal I}
\def\calK{\mathcal K}
\def\Lie{\mathcal {L}}
\def\N{\mathcal N}
\def\T{\mathcal T}
\def\calU{\mathcal U}
\def\calV{\mathcal V}
\def\calW{\mathcal W}
\def\X{\mathcal X}
\def\Y{\mathcal Y}
\def\Z{\mathcal Z}
\def\Eigen{\mathscr E}
\def\Ha{\mathscr H}
\def\isotropy{\mathscr I}
\def\preisotropy{{\!\!\mathscr J\!}}
\def\Oh{O}
\def\C{\mathbb C}
\def\D{\mathbb D}
\def\Q{\mathbb Q}
\def\R{\mathbb R}
\def\a{\mathfrak a}
\def\A{\mathfrak A}
\def\h{\mathfrak h}
\def\m{\mathfrak m}
\def\proj{\mathfrak p}
\def\Dee{\mathrm {D}}
\def\gg{\mathrm g}
\def\Eta{\mathrm H}
\def\hor{\mathrm{Hor}}
\def\rmL{\mathrm L}
\def\rmR{\mathrm R}
\def\vert{\mathrm{Vert}}
\def\Laplacian{\square}
\def\im{i}
\def\minus{\backslash}
\def\Wedge{\raise2ex\hbox{$\mathchar"0356$}}
\def\inner{\mathbf i}
\def\Id{I}
\def\id{\mathrm{id}}
\def\rpar{)}
\def\lbra{[}
\def\st{\text{s.t.}}
\def\set#1{\{#1\}}
\def\contr{\rfloor}
\newcommand{\tensor}{\text{\raisebox{0.0ex}{\scalebox{0.8}{$\otimes$}}}}
\newcommand{\boxtensor}{\mathbin{\text{\raisebox{0.0ex}{\scalebox{0.9}{$\boxtimes$}}}}}
\def\trans{\pitchfork}
\def\one{{1\hspace{-3pt}1}}
\newcommand{\x}{\mathbin {\text{\raisebox{0.25ex}{\scalebox{0.6}{$\times$}}}} }
\def\display#1#2{\mbox{\parbox{#1} {#2}}}
\begin{document}

\title[Atiyah-Bott-Lefschetz formula]{An Atiyah-Bott formula for the Lefschetz number of a singular foliation
}
\author{Luiz Hartmann}
\email{hartmann@dm.ufscar.br}
\address{Department of Mathematics\\
Universidade Federal de São Carlos (UFSCar)\\
São Carlos, Brazil
}
\author{Gerardo A. Mendoza}
\email{gmendoza@temple.edu}
\address{Department of Mathematics\\
Temple University\\
Philadelphia, PA 19122}

\begin{abstract}
This paper presents a formula for the Lefschetz number of a geometric endomorphism in the style of the Atiyah-Bott theorem. The underlying data consist, first, of a compact manifold and a nowhere vanishing smooth real vector field $\T$ that preserves some Riemannian metric, and second, a sequence of first order operators on sections of Hermitian vector bundles with connection whose curvature is annihilated by $\T$ and for which parallel transport along integral curves of $\T$ is unitary. Assuming that the operators of the sequence commute with the various covariant derivatives $\Lie_\T=\nabla_\T$ and that their restriction to the spaces of sections annihilated by $\Lie_\T$ form a complex, an ellipticity condition gives finite-dimensionality of the resulting equivariant cohomology spaces. The Atiyah-Bott framework, adapted to give a geometric endomorphism only for the complex of $\Lie_\T$-parallel sections, together with the finiteness of cohomology allows for the definition of a Lefschetz number. Replacing the condition that the fixed points of the equivariant map $f$ associated with the endomorphism be simple by a condition on wave front sets, which is the underlying condition of Atiyah and Bott, yields that the set of closures of orbits by $\T$ left invariant by  $f$ is finite, and then a formula similar to theirs, now relating the Lefschetz number with traces along these orbits.
\end{abstract}

\thanks{Research partially supported by FAPESP 2019/08834-4.}
\keywords{Lefschetz number, singular Riemannian foliation, equivariant cohomology, $\R$-actions}
\subjclass[2020]{Primary 58J20; Secondary 58J10, 57R30, 55N91, 53C21}
\maketitle

\section{Introduction}

The formula of Atiyah and Bott \cite{AtBo67,AtBo68} for the Lefschetz number has been the source of inspiration of many interesting extensions to new settings, applications, and intriguing connections to other fields. A sampling of variants include theorems in   
the context of quasicomplexes by Tarkhanov and Wallenta \cite{TaWa12} and Wallenta \cite{Wal14}, complex analysis/geometry as in the already cited work of Atiyah and Bott,  Donnelly and Fefferman \cite{DonFef86}, as well as Kytmanov et al. \cite{KyMyTa04}, and Tarkhanov \cite{Tar04}. Other extensions include work by \'Alvarez and Kordyukov \cite{AlKor08} and Heitsch and Lazarov \cite{HeLa}. Extensions to singular settings include manifolds with conical singularities studied by Bei \cite{Bei} and Nazaikinskii et al. \cite{NSSV}, and orbifolds, for instance Sardo Infirri \cite{SI92} in the complex case. In another direction, there are Atiyah-Bott type formulas in the case of flows \cite{Dei97,OBr77}. For connections to other fields see Leichtnam \cite{Leichtnam}.

Closer to the present work, Zeggar \cite{Zeg92} studied the Atiyah-Bott formula for the Lefschetz number within the framework of basic cohomology for a regularly oriented Riemannian foliation adapting earlier work of Toledo \cite{Tol73}. Such foliations include orbifolds as base spaces (as the space of leaves). Still in the case of regular foliations but in a somewhat different direction, \cite{AlKor08} and \cite{HeLa} deal with a kind of families Atiyah-Bott-Lefschetz formula in that they consider fiberwise complexes for which they derive, under certain hypotheses, a global integral formula.

\medskip
Our starting point is a closed $n$-manifold $\N$ together with a nowhere vanishing real vector field $\T$ that admits an invariant Riemannian metric. These (man\-i\-fold-and-vector field) arise naturally as boundaries of complex $b$-manifolds with compact boundary, see \cite[\S 4]{Me2014}. With additional structure on the boundary inherited from the interior complex $b$-structure, they can be viewed as generalizations of holomorphic line bundles. Ultimately it is these boundaries of complex $b$-manifolds, which admit families of CR structures, that are our interest. Complex $b$-manifolds are of interest in that they generalize real blowups of complex analytic manifolds with point singularities. Further work on such manifolds will be taken up elsewhere.

Given $\N$ and $\T$ (and some invariant Riemannian metric $\gg$), all fixed henceforth, denote by $\a_t$ the one-parameter group of diffeomorphisms generated by $\T$. Since these are isometries, the closure of $\mathcal G=\set{\a_t:t\in \R}$, denoted by $G$, is a Lie subgroup of the group of all isometries of $\N$ for that metric. The group $G$ is compact and abelian because $\mathcal G$ is abelian and the group of isometries of $\gg$ is compact. It is connected because it is the closure of a connected set. We will write $\a_g$ for the isometry defined by $g\in G$ while still keeping the notation $\a_t$ for the diffeomorphisms generated by $\T$, and use multiplicative notation for the group operation, so $\a_{g_1}\circ\a_{g_2}=\a_{g_1g_2}$. 

The group $G$ is diffeomorphic to a torus, hence its orbits are diffeomorphic to tori of possibly varying dimensions. These orbits define the foliation mentioned above. That this foliation is Riemannian is a consequence of a theorem of Molino  \cite[Prop. 6.2, p. 191]{Molino88} stating that the singular foliation obtained by closing the leaves of a Riemannian foliation is again Riemannian. In our case, the initial foliation is by the orbits of $\T$. More details of this initial description will be given later. We point out that the specific invariant Riemannian metric on $\N$ is not important, only the existence of one such metric matters; $G$ itself is independent of the metric.

We will write $\B$ for the space of orbits of $G$ with the quotient topology, use $\Y$ to denote an arbitrary element of $\B$, and $\Y_p$ for the orbit of $G$ through $p$. Note that $\Y_p$ is the closure of the orbit of $p$ by $\mathcal G$. The space $\B$ is Hausdorff, not necessarily a manifold, but has an open dense subset that is a manifold. Generally we will not emphasize the fact that the results may have an interpretation as theorems on a manifold with singularities. However, we will use notation such as $C^\infty(\B;\E)$ to mean certain spaces of smooth sections on $\N$. For instance, $C^\infty(\B)$ is the space of smooth function on $\N$ which are constant on orbits of $G$; this point of view, akin to an analytic desingularization of $\B$, is not uncommon in the present context in the literature.

\medskip

Let 
\begin{equation}\label{Inner}
\inner_\T:\Wedge^q\N\to\Wedge^{q-1}\N
\end{equation}
be interior multiplication by $\T$. The kernel of $\inner_\T$ in $\C T^*\N$ is a subbundle $\Hor^*$ of rank $n-1$, and the kernel of \eqref{Inner} is $\Wedge^q \Hor^*$ (we slightly simplify the notation by regarding $\Wedge^q\N$ already as the exterior powers of the complexification of $T^*\N$). The central element of our theorem is a sequence of first order differential operators
\begin{equation*}
C^\infty(\N;E^0)\xrightarrow{\Dee_0}C^\infty(\N;E^1)\xrightarrow{\Dee_1}\cdots \xrightarrow{\Dee_{m-1}}C^\infty(\N;E^m)
\end{equation*}
on sections of Hermitian vector bundles over $\N$, each equipped with an Hermitian connection $\nabla^q$ whose curvature is $E^q\otimes \Wedge^2\Hor^*$-valued. The sequence is not required to be a cochain complex: $\Dee_{q+1}\circ \Dee_q$ may not vanish. We fix once and for all a smooth positive density $\m$ invariant under $\a_t$ and use it and the Hermitian inner product to give each of the spaces of the complex a pre-Hilbert space structure.

We write $\Lie_\T$ for $\nabla^q_\T$ leaving implicit the reference to $q$. Since the connection is Hermitian,
\begin{equation*}
\T h_q(u,v)=h_q(\Lie_\T u,v)+h_q(u,\Lie_\T v),\quad u,v\in C^\infty(\N;E^q),
\end{equation*}
where $h_q$ is the Hermitian metric of $E^q$. This and the $\T$-invariance of $\m$ give that
\begin{equation}\label{iTisSymmetric}
-\im\Lie_\T:C^\infty(\N;E^q)\to C^\infty(\N;E^q) \text{ is formally selfadjoint}.
\end{equation}

We further require that
\begin{equation}\label{Commutators}
\Lie_\T \circ \Dee_q=\Dee_q\circ \Lie_\T \text{ and }\Dee_{q+1}\circ \Dee_q u=0 \text{ if }\Lie_\T u=0.
\end{equation}
 for each $q$. Define
\begin{equation}\label{PseudoSections} 
C^\infty(\B;\E^q)=\set{u\in C^\infty(\N;E^q):\Lie_\T u=0}.
\end{equation}
Letting now $\D^q$ denote the restriction of $\Dee_q$ to $\E^q$ we get a cochain complex
\begin{equation}\label{TheGeneralComplex}
C^\infty(\B; \E^0)\xrightarrow{\D_0}C^\infty(\B; \E^1)\xrightarrow{\D_1}\cdots \xrightarrow{\D_{m-1}}C^\infty(\B; \E^m)
\end{equation}
whose cohomology spaces will be denoted $H^q(\B,\E)$. These may not be finite-{}di\-men\-sional. However, we have:

\begin{proposition}\label{Ellipticity}
If
\begin{equation}\label{FiniteCohomology}
P_q = \Dee_{q}^\star \circ \Dee_{q}+\Dee_{q-1}\circ\Dee_{q-1}^\star-\Lie_\T^2
\end{equation}
is elliptic, then the cohomology spaces of the complex \eqref{TheGeneralComplex} are finite-di\-men\-sion\-al.
\end{proposition}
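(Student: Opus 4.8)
The plan is to realize $P_q$ as an honest nonnegative selfadjoint elliptic operator on the closed manifold $\N$, to observe that on $\Lie_\T$-parallel sections it reduces to the Laplacian $\Laplacian_q=\D_q^\star\circ\D_q+\D_{q-1}\circ\D_{q-1}^\star$ of the complex \eqref{TheGeneralComplex} — the summand $-\Lie_\T^2$ being precisely what restores ellipticity in the flow direction, while it vanishes on $C^\infty(\B;\E^q)$ — and then to run the usual Hodge identification of cohomology with harmonic forms, after the crucial reduction to the $\Lie_\T$-invariant subspace. So first I would record the structural facts about $P_q$. By \eqref{iTisSymmetric} the operator $-\im\Lie_\T$ is formally selfadjoint, hence $-\Lie_\T^2=(-\im\Lie_\T)^2$ is formally selfadjoint and $\ge0$ with $\langle-\Lie_\T^2u,u\rangle=\|\Lie_\T u\|^2$; since $\Dee_q^\star\circ\Dee_q$ and $\Dee_{q-1}\circ\Dee_{q-1}^\star$ are obviously formally selfadjoint and $\ge0$, so is $P_q$. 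Being also elliptic on the closed manifold $\N$, $P_q$ is essentially selfadjoint on $C^\infty(\N;E^q)$ (a standard consequence of ellipticity on a closed manifold, via elliptic regularity for $P_q\mp\im$), and its closure $\overline{P_q}$ has discrete spectrum, contained in $[0,\infty)$, accumulating only at $+\infty$, with finite-dimensional eigenspaces consisting of smooth sections. Taking formal adjoints in $\Lie_\T\circ\Dee_q=\Dee_q\circ\Lie_\T$ and using $\Lie_\T^\star=-\Lie_\T$ shows $\Lie_\T\circ\Dee_q^\star=\Dee_q^\star\circ\Lie_\T$ as well; hence each of $\Dee_q$, $\Dee_q^\star$, $\Lie_\T$, and therefore $P_q$, maps $C^\infty(\B;\E^q)=\set{u:\Lie_\T u=0}$ into itself, and there $P_q=\Laplacian_q$; moreover the restriction of $\Dee_q^\star$ to $\Lie_\T$-parallel sections is the adjoint $\D_q^\star$ of $\D_q$ for the induced inner products.

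Next I would pass to the $\Lie_\T$-invariant subspace at the Hilbert-space level. The flow $\a_t$ lifts, via $\nabla^q$-parallel transport along orbits of $\T$, to a one-parameter group of unitary automorphisms of $E^q$ covering $\a_t$; since these are isometries, the closure $G_q$ of this group in the isometric bundle automorphisms of $E^q$ is compact (it covers $G$), and averaging over $G_q$ is the orthogonal projection of $L^2(\N;E^q)$ onto the closed subspace $L^2(\B;\E^q):=\overline{C^\infty(\B;\E^q)}$. Because $\Dee_q$, $\Dee_q^\star$, $\Lie_\T$ commute with $\Lie_\T$, they commute with this flow, hence with $G_q$, hence $\overline{P_q}$ commutes with that projection; therefore $\overline{P_q}$ restricts to a selfadjoint operator on $L^2(\B;\E^q)$ whose spectrum is a subset of that of $\overline{P_q}$, so again discrete with finite-dimensional eigenspaces lying, by elliptic regularity, in $C^\infty(\B;\E^q)$. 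In particular its kernel is $\ker P_q\cap C^\infty(\B;\E^q)$, which is finite-dimensional; and for $u\in C^\infty(\B;\E^q)$ the identity $\langle P_qu,u\rangle=\|\Dee_qu\|^2+\|\Dee_{q-1}^\star u\|^2$ identifies this kernel with the space of harmonic forms $\set{u\in C^\infty(\B;\E^q):\D_qu=0,\ \D_{q-1}^\star u=0}$ of the complex \eqref{TheGeneralComplex}. Since $0$ is isolated in the spectrum, $\overline{P_q}|_{L^2(\B;\E^q)}$ has closed range, and for any $w\in C^\infty(\B;\E^q)$ orthogonal to the harmonic space the section $z=\sum_{\lambda>0}\lambda^{-1}w_\lambda$ (sum over its positive-eigenvalue components) converges in $L^2$, solves $\overline{P_q}z=w$, and by elliptic regularity together with $G_q$-invariance lies in $C^\infty(\B;\E^q)$ with $\Laplacian_q z=w$.

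The standard Hodge argument then finishes. Given a cocycle $w\in C^\infty(\B;\E^q)$, decompose $w=u_0+w'$ with $u_0$ harmonic (hence smooth, hence in $C^\infty(\B;\E^q)$) and $w'$ in the orthocomplement of the harmonic space; then $\D_qw'=0$, and with $z$ as above $w'=\D_q^\star\D_qz+\D_{q-1}\D_{q-1}^\star z$. Applying $\D_q$ and using $\D_q\circ\D_{q-1}=0$ — which holds by \eqref{Commutators} since elements of $C^\infty(\B;\E^{q-1})$ are $\Lie_\T$-parallel — gives $\D_q\D_q^\star\D_qz=0$, whence $\|\D_q^\star\D_qz\|^2=0$ and $w'=\D_{q-1}(\D_{q-1}^\star z)$ is exact; thus $[w]=[u_0]$, so the natural map from the harmonic space to $H^q(\B,\E)$ is onto. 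It is injective because a harmonic form $u_0=\D_{q-1}v$ satisfies $\|u_0\|^2=\langle v,\D_{q-1}^\star u_0\rangle=0$. Hence $H^q(\B,\E)$ is isomorphic to the finite-dimensional harmonic space, which is the assertion.

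I expect the main obstacle not to be the Hodge computation but the reduction that legitimizes it: since $\B$ is not a manifold, elliptic theory cannot be applied to $(\D_q)$ directly, and the entire content of the hypothesis is that the auxiliary operator $P_q$ on $\N$ — whose extra summand $-\Lie_\T^2$ supplies the ellipticity that $\Laplacian_q$ alone lacks in the $\T$-direction, yet dies on $\Lie_\T$-parallel sections — is elliptic there. The delicate point is to justify rigorously that $\overline{P_q}$ genuinely restricts to a selfadjoint operator on $L^2(\B;\E^q)$ with discrete spectrum and smooth eigensections, for which the compactness of the group $G_q$ generated by the lifted flow (equivalently, the Riemannian character of the foliation by closed orbits, and the unitarity of $\nabla^q$-parallel transport along $\T$ together with the horizontality of the curvature) is exactly what is used.
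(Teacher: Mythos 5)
Your proof is correct and follows essentially the same route as the paper: realize $P_q$ as a nonnegative, formally selfadjoint elliptic operator on the closed manifold $\N$ with discrete spectrum and smooth finite-dimensional eigenspaces, restrict to the $\Lie_\T$-invariant subspace where the term $-\Lie_\T^2$ dies and $P_q$ becomes the Hodge Laplacian of \eqref{TheGeneralComplex}, and then run the standard Hodge identification of $H^q(\B,\E)$ with the (finite-dimensional) harmonic space. The only divergence is in how the reduction to $L^2(\B;\E^q)$ is legitimized: you invoke the compact group generated by the lifted flow and the averaging projection (Lemma \ref{AvIsProjector}), whereas the paper splits each $P_q$-eigenspace into joint eigenspaces of $-\im\Lie_\T$, deduces that $C^\infty(\B;\E^q)$ is dense in $L^2(\B;\E^q)$, and identifies $\D_q^\star$ with the Hilbert-space adjoint of $\D_q$ on explicit maximal domains — both reductions are valid and rest on the same commutation $[P_q,\Lie_\T]=0$.
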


The $\star$ denotes the formal adjoint.  The proof consists of showing that the restriction of $P_q$ to $C^\infty(\B;\E^q)$ is the Laplacian in degree $q$ for the complex \eqref{TheGeneralComplex} and relating this to cohomology using Hodge theory. This does require some delicate analysis of the domains of the operators involved; the proof is given in Section \ref{s-FiniteDimCoho}. For now we give credence to this approach by noting  that \eqref{iTisSymmetric} and \eqref{Commutators} imply that $
\Dee_q$ and $\Lie_\T$ commute, so $\D_q^\star$ maps $C^\infty(\B;E^{q+1})$ to $C^\infty(\B;E^q)$.

\medskip
Next we adapt Atiyah-Bott's \cite{AtBo67} notion of geometric endomorphism to the present situation. Using parallel transport along the curves $t\mapsto \a_t(p)$ we get a one-parameter group of unitary morphisms $\A^q_t:E^q\to E^q$ covering $\a_t$. 

\begin{proposition}\label{OnConnections}
The closure of $\set{\A^q_t:t\in \R}$ in the compact-open topology of vector bundle morphisms $E^q\to E^q$ is again a compact connected abelian Lie group, $\hat G^q$, acting on $E^q$ by smooth unitary morphisms $\hat g\mapsto \A^q_{\hat g}$. For each $q$ there is a surjective homomorphism $\wp^q:\hat G^q\to G$ such that the homomorphism $\A^q_{\hat g}$ covers $\a_{\wp^q(\hat g)}$.
\end{proposition}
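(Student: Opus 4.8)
The plan is to lift the whole situation to the bundle of unitary frames. Write $k$ for the rank of $E^q$ and let $\pi\colon P\to\N$ be the principal $U(k)$-bundle of unitary frames of $E^q$; since $\N$ is closed and $U(k)$ is compact, $P$ is a compact manifold. The Hermitian connection $\nabla^q$ corresponds to a principal connection on $P$; let $\omega$ be its connection $1$-form, $H=\ker\omega\subset TP$ the horizontal subbundle, and $V=\ker d\pi$ the vertical subbundle. Let $\tilde\T$ be the $\omega$-horizontal lift of $\T$ — the unique $H$-valued lift of $\T$, hence a vector field on $P$ invariant under the right $U(k)$-action — and let $\tilde\a_t$ be its flow. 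Then $\tilde\a_t$ is a one-parameter group of $U(k)$-equivariant diffeomorphisms of $P$ with $\pi\circ\tilde\a_t=\a_t\circ\pi$, and under the standard bijection between $U(k)$-equivariant diffeomorphisms of $P$ and unitary bundle automorphisms of $E^q=P\times_{U(k)}\C^k$ — a group isomorphism which is moreover a homeomorphism for the compact-open topologies, a routine verification using compactness of $P$ and of the unit sphere bundle of $E^q$ — the group $\set{\tilde\a_t}$ corresponds to $\set{\A^q_t}$. It therefore suffices to prove the proposition with $\set{\A^q_t}$ replaced by $\set{\tilde\a_t:t\in\R}$, its closure taken among $U(k)$-equivariant diffeomorphisms of $P$.

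The key step is to produce a Riemannian metric $\hat\gg$ on $P$ that is invariant under both the $U(k)$-action and the flow $\tilde\a_t$; then $\tilde\T$ is a Killing field and the closure of $\set{\tilde\a_t}$ lies in the compact group $\Iso(P,\hat\gg)$. Define $\hat\gg$ by declaring $H\perp V$, setting $\hat\gg|_H=\pi^*\gg$ via the isomorphism $d\pi\colon H_u\xrightarrow{\sim}T_{\pi(u)}\N$, and taking $\hat\gg|_V$ to be the image of a fixed $\mathrm{Ad}$-invariant inner product on $\mathfrak u(k)$ under the identification of $V_u$ with $\mathfrak u(k)$ by fundamental vector fields. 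Invariance under $U(k)$ is automatic. For the flow: since $\tilde\T$ is right-$U(k)$-invariant, $\tilde\a_t$ commutes with the $U(k)$-action, hence preserves $V$ and the fundamental vector fields and so preserves $\hat\gg|_V$; and since $\pi\circ\tilde\a_t=\a_t\circ\pi$ and $\gg$ is $\a_t$-invariant, $\tilde\a_t$ preserves $\hat\gg|_H$ as soon as it preserves $H$. That last point is exactly where the curvature hypothesis is used: by Cartan's formula $\Lie_{\tilde\T}\omega=d(\inner_{\tilde\T}\omega)+\inner_{\tilde\T}d\omega$, where $\inner_{\tilde\T}\omega=0$ because $\tilde\T$ is horizontal, the structure equation writes $d\omega$ as the curvature $2$-form $\Omega$ plus a term in $\omega\wedge\omega$ that $\inner_{\tilde\T}$ annihilates, and $\inner_{\tilde\T}\Omega=0$ because $\Omega$ is the pullback of the curvature of $\nabla^q$, which by hypothesis is $E^q\otimes\Wedge^2\Hor^*$-valued, that is, annihilated by $\inner_\T$. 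Hence $\Lie_{\tilde\T}\omega=0$, the flow preserves $H=\ker\omega$, and $\hat\gg$ is $\tilde\a_t$-invariant.

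Granting $\hat\gg$, the group $\Iso(P,\hat\gg)$ is a compact Lie group by the Myers--Steenrod theorem, so $\hat G^q:=\overline{\set{\tilde\a_t:t\in\R}}\subset\Iso(P,\hat\gg)$ is a closed, hence compact, Lie subgroup; it is connected, being the closure of the connected set $\set{\tilde\a_t}$, and abelian, since $\set{\tilde\a_t}$ is abelian and the commutator map is continuous. For each $h\in U(k)$, commuting with right translation by $h$ is a closed condition on $\Iso(P,\hat\gg)$ satisfied by every $\tilde\a_t$, hence by every element of $\hat G^q$; thus each $\hat g\in\hat G^q$ is $U(k)$-equivariant, is smooth by Myers--Steenrod, and so induces a smooth unitary automorphism $\A^q_{\hat g}$ of $E^q$, with the action $\hat G^q\times E^q\to E^q$ smooth because $\Iso(P,\hat\gg)\times P\to P$ is. Finally, $U(k)$-equivariance forces $\hat g$ to permute the fibers of $\pi$, so $\hat g$ covers a diffeomorphism $\bar g$ of $\N$; since $\hat g$ is a $\hat\gg$-isometry preserving $V$, and hence $H=V^\perp$, on which $\hat\gg=\pi^*\gg$, the map $\bar g$ is a $\gg$-isometry. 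Setting $\wp^q(\hat g)=\bar g$ defines a continuous homomorphism $\hat G^q\to\Iso(\N,\gg)$ with $\wp^q(\tilde\a_t)=\a_t$; its image is compact, hence closed, and contains $\set{\a_t:t\in\R}$, which is dense in $G$, whence $\wp^q(\hat G^q)=G$, and by construction $\A^q_{\hat g}$ covers $\a_{\wp^q(\hat g)}$.

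The main obstacle — indeed the only point that is not purely formal — is the construction in the second paragraph: exhibiting a Riemannian metric on the compact manifold $P$ preserved by the horizontal-lift flow $\tilde\a_t$. This is exactly where the hypothesis that the curvature of $\nabla^q$ is annihilated by $\inner_\T$ is used essentially, being precisely what guarantees that $\tilde\a_t$ preserves the horizontal distribution. Once such a metric is at hand, compactness of $\hat G^q$ follows at once from Myers--Steenrod, and the remaining assertions — the equivalence of $U(k)$-equivariant transformations of $P$ with unitary automorphisms of $E^q$ as topological groups, the closedness of the equivariance and fiber-preserving conditions, the smoothness of the action, and the continuity of $\wp^q$ — are routine.
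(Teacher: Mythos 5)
Your proof is correct, and it follows the same overall strategy as the paper's: build a Riemannian metric on a compact auxiliary space attached to $E^q$ that is preserved by the lifted flow, take the closure inside the isometry group of that space (compact by Myers--Steenrod), and push the resulting group back down to $E^q$; surjectivity of $\wp^q$ then comes from density. The difference is in the realization. The paper works on the sphere bundle of $E^q$ itself, with the metric assembled from the horizontal/vertical splitting of the tangent bundle of the total space, and proves the central point --- that the flow of the horizontal lift preserves that splitting precisely when the curvature is annihilated by $\inner_\T$ --- by an explicit computation with a local parallel frame and its connection forms (Proposition \ref{UnitaryIsometryGroup}). You work instead on the principal $U(k)$-bundle of unitary frames and obtain the same central point from Cartan's formula plus the structure equation, which is shorter and coordinate-free; the price is the (genuinely routine) bookkeeping identifying $U(k)$-equivariant diffeomorphisms of $P$ with unitary bundle automorphisms of $E^q$ as topological groups, where the paper instead needs the equally elementary fact that an isometry of the sphere bundle arising as a limit of the $\A^q_t$ extends to a unitary morphism of the vector bundle. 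One tiny point to make explicit: your argument for $\wp^q(\hat G^q)=G$ gives only the inclusion $\wp^q(\hat G^q)\supseteq G$; the reverse inclusion follows from continuity of $\wp^q$ together with $\hat G^q=\overline{\set{\tilde\a_t}}$, so that $\wp^q(\hat G^q)\subseteq\overline{\set{\a_t}}=G$.
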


The validity of the proposition for the vector bundle $E^q$ is equivalent to the assumed existence of a Hermitian connection whose curvature is valued in $E^q\otimes \Wedge^2\Hor^*$. The proofs of this statement and the proposition are given in Section \ref{ConnectionsGroupsAverage}. Appendix~\ref{CurvatureAndGroups} gives a full discussion on the existence of such connections. The surjectivity of $\wp^q$ is just a consequence of the fact that the bundle morphisms $\A^q_t$ cover $\a_t$.

\medskip
Define $\pmb \A^q_t:C^\infty(\N;E^q)\to C^\infty(\N;E^q)$ by the formula
\begin{equation}\label{AonSections}
\big(\pmb \A^q_t(u)\big)(p)=\A^q_t\big(u(\a_{-t}(p))\big).
\end{equation}

\medskip
Assume given a smooth map $f:\N\to \N$ such that $f_*\T=\T$ and vector bundle morphisms
\begin{equation}\label{GeometricEndomorphisms}
\phi^q:f^*E^q\to E^q
\end{equation}
covering the identity such that, with $f^*\A^q_t:f^*E^q\to f^*E^q$ defined by $
(f^* \A^q_t)(p,\eta)=\big(\a_t(p), \A^q_t(\eta)\big)$ for $\eta\in E^q_{f(p)}$, the  diagram
\begin{equation}\label{CDforPhi}
\begin{tikzcd}
f^*E^q \arrow[d,"f^*\A^q_t"'] \arrow[r,"\phi^q"]& E^q \arrow[d, "\A^q_t"]\\
f^*E^q\arrow[r,"\phi^q"]& E^q\\
\end{tikzcd}
\end{equation}
commutes. 
Then the maps 
\begin{equation}\label{DefOfPhiq}
\Phi^q:C^\infty(\N;E^q)\to C^\infty(\N;E^q),\quad \Phi^q(u)(p)=\phi^q(p,u(f(p)))
\end{equation}
commute with $\pmb \A_t^q$, hence they commute with $\Lie_\T$. Thus they map $C^\infty(\B;\E^q)$ to itself. Adapting from Atiyah-Bott \cite{AtBo67}, the setup is then completed with the requirement that when assembled in the diagram 
\begin{equation}\label{CD}
\begin{CD}
\cdots & \longrightarrow\ &C^\infty(\B; \E^{q-1})&@>{\D_{q-1}}>> &C^\infty(\B; \E^q)&@>{\D_{q}}>> &C^\infty(\B; \E^{q+1})&\ \longrightarrow &\ \cdots\\
& &@V\Phi^{q-1}VV & & @V\Phi^qVV & & @V\Phi^{q+1}VV \\
\cdots & \longrightarrow\ &C^\infty(\B; \E^{q-1})&@>{\D_{q-1}}>> &C^\infty(\B; \E^q)&@>{\D_{q}}>> &C^\infty(\B; \E^{q+1})&\ \longrightarrow &\ \cdots
\end{CD}
\end{equation}
they give a cochain map. 

Denote by $\pmb \Phi^q$ the maps induced in cohomology. Assuming the hypothesis of Proposition \ref{Ellipticity}, there is a well defined Lefschetz number,
\begin{equation}\label{LefschetzNumber}
L_{f,\phi}=\sum_{q=0}^m (-1)^q\tr \pmb \Phi^q.
\end{equation}

\medskip
The last elements needed in the statement of the main theorem are essentially analytic in nature. To bring them in we need to define averaging operators. In the diagram 
\begin{equation}\label{CDforAv}
\begin{tikzcd}
E^q \arrow[d]&\hat \pi^*E^q \arrow[l,"\hat{\pmb \rho}"']\arrow[d] \arrow[r,"\hat{\pmb \pi}"]& E^q \arrow[d]\\
\N & \N\times \hat G^q \arrow[l,"\hat\rho"']\arrow[r,"\hat\pi"]& \N\\
\end{tikzcd}
\end{equation}
the vertical arrows and the maps $\hat \pi$ and $\hat{\pmb \pi}$ are the canonical projections. The map $\hat{\pmb \rho}$ is the bundle morphism
\begin{equation*}
\hat{\pmb \rho}(p,\hat g;\eta)=\A^q_{\hat g}(\eta),\qquad (p,\hat g;\eta)\in \hat\pi^*E^q
\end{equation*}
covering
\begin{equation*}
\hat \rho(p,\hat g)=\a_{\wp^q(\hat g)}(p).
\end{equation*}
Observe that $\hat \rho:\N\times\hat G^q\to \N$ is surjective, in fact makes $\N\times\hat G^q$ into a (trivial) principal $\hat G^q$-bundle, $(p,\hat g)\cdot \hat h=(\a_{\wp^q(h^{-1})}p,\hat g h)$. The fiber over $p\in \N$ is
\begin{equation*}
\hat \rho^{-1}(p)=\set{(\a_{\wp^q(\hat g^{-1})}(p),\hat g):\hat g\in \hat G^q}.
\end{equation*}
Define 
\begin{equation}\label{Av}
\Av^q: C^\infty(\N;E^q)\to C^\infty(\N;E^q),\quad \Av^q(u)=\hat{\pmb \rho}_*((\hat{\pmb \pi}^* u)\, \hat \mu^q)
\end{equation}
where $\hat \mu^q$ the normalized Haar measure on $\hat G^q$. Explicitly, if $(\a_{\wp^q(\hat g^{-1})}(p),\hat g;\eta)\in \pi^* E^q$, thus $\eta\in E^q_{\a_{\wp^q(\hat g^{-1})}(p)}$, then $\A^q_{\hat g}(\eta)\in E^q_p$. Thus if $u$ is a continuous section of $E^q$ along $\Y_p$, then 
\begin{equation*}
\hat G^q\ni \hat g\mapsto \A^q_{\hat g}(u(\a_{\wp^q(\hat g^{-1})}p))\in E^q_p
\end{equation*}
is continuous and we can define 
\begin{equation}\label{defOfAverage}
(\Av^q u)(p)=\int_{\hat G^q} \A^q_{\smash[t]{\hat g}} \big(u(\a_{\wp^q(\hat g^{-1})}(p))\big)\,d\hat \mu^q(\hat g).
\end{equation}
Of course this averaging procedure can be applied to $u$ also if it is defined on all of $\N$. By properties of wavefront set under pull-back and push-forward, if $u$ is smooth, then so is $\Av^q(u)$. 

\begin{lemma}
The operator in \eqref{Av} satisfies
\begin{equation*}
\A^q_{\hat g}\big(\Av^q(u)(p)\big)=\Av^q(u)(\a_{\wp^q({\hat g})}(p))\text{ for all }\hat g\in \hat G^q.
\end{equation*}
In particular
\begin{equation*}
\A^q_{-t}(\Av^q(u)(\a_t(p)))=\Av^q(u)(p)
\end{equation*}
and so $\Lie_\T\Av^q u=0$. It follows that $\Av^q$ is a projection on $C^\infty(\B;\E^q)\subset C^\infty(\N;E^q)$.
 \end{lemma}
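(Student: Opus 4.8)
The plan is to run the standard Haar-averaging argument, carried out in four steps matching the four assertions. Throughout I would use that each $\A^q_{\hat g}$ is $\C$-linear on fibers, that $\hat g\mapsto\A^q_{\hat g}$ is a group homomorphism (Proposition~\ref{OnConnections}), that $\wp^q$ and $g\mapsto\a_g$ are homomorphisms, and that the Haar measure $\hat\mu^q$ is left-invariant and normalized.

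For the first (equivariance) identity I would apply the linear morphism $\A^q_{\hat g}$ under the integral sign in \eqref{defOfAverage}, collapse $\A^q_{\hat g}\A^q_{\hat k}$ to $\A^q_{\hat g\hat k}$, then substitute $\hat h=\hat g\hat k$ and use left-invariance of $\hat\mu^q$; this rewrites the integral as $\int_{\hat G^q}\A^q_{\hat h}\big(u(\a_{\wp^q(\hat h^{-1}\hat g)}(p))\big)\,d\hat\mu^q(\hat h)$. Since $\wp^q$ and $g\mapsto\a_g$ are homomorphisms, $\a_{\wp^q(\hat h^{-1}\hat g)}=\a_{\wp^q(\hat h^{-1})}\circ\a_{\wp^q(\hat g)}$, so by \eqref{defOfAverage} this equals exactly $\Av^q(u)(\a_{\wp^q(\hat g)}(p))$, as claimed.

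The remaining assertions then follow quickly. Specializing $\hat g$ to an element $\A^q_t$ of the one-parameter subgroup (for which $\a_{\wp^q(\A^q_t)}=\a_t$, as $\A^q_t$ covers $\a_t$) gives $\A^q_t(\Av^q(u)(p))=\Av^q(u)(\a_t(p))$; applying $\A^q_{-t}=(\A^q_t)^{-1}$ yields the second displayed relation, which by \eqref{AonSections} says precisely that $\pmb\A^q_t(\Av^q u)=\Av^q u$ for all $t$. Since a section $v$ is annihilated by $\Lie_\T$ exactly when it is fixed by every $\pmb\A^q_t$ --- the infinitesimal form of the relation between the parallel transport $\A^q_t$ along orbits of $\T$ and the covariant derivative $\Lie_\T=\nabla^q_\T$ --- this gives $\Lie_\T\Av^q u=0$, and with the smoothness of $\Av^q u$ noted before the lemma we conclude that $\Av^q$ maps $C^\infty(\N;E^q)$ into $C^\infty(\B;\E^q)$. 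Finally, for $u\in C^\infty(\B;\E^q)$ one has $\A^q_t(u(\a_{-t}(p)))=(\pmb\A^q_t u)(p)=u(p)$ for all $t$; since $\hat g\mapsto\A^q_{\hat g}(u(\a_{\wp^q(\hat g^{-1})}(p)))$ is continuous and agrees with $u(p)$ on the dense set $\{\A^q_t:t\in\R\}\subset\hat G^q$, it equals $u(p)$ for every $\hat g$, so $\Av^q u=u$ after integrating against the normalized measure $\hat\mu^q$; hence $\Av^q$ is a projection onto $C^\infty(\B;\E^q)$.

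The argument is essentially bookkeeping with homomorphisms and the invariance of Haar measure, so I do not expect a serious obstacle. The only step that needs a word of care is the last one, where invariance under the one-parameter group $\{\A^q_t\}$ must be promoted to invariance under all of $\hat G^q$; this rests on the density of $\{\A^q_t\}$ in $\hat G^q$ and the continuity of $\hat g\mapsto\A^q_{\hat g}(u(\a_{\wp^q(\hat g^{-1})}(p)))$, both already available.
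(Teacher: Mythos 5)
Your argument is correct and is exactly the standard Haar-averaging computation the authors have in mind (the paper itself only remarks ``The proof is clear'' and gives no details): apply $\A^q_{\hat g}$ under the integral, use the homomorphism property and left-invariance of $\hat\mu^q$, specialize to the one-parameter subgroup to get $\Lie_\T\Av^q u=0$, and use density of $\{\A^q_t\}$ in $\hat G^q$ to see that $\Av^q$ restricts to the identity on $C^\infty(\B;\E^q)$. No gaps; the one step you flagged as needing care (promoting invariance under $\{\A^q_t\}$ to invariance under $\hat G^q$) is handled correctly by continuity and density.
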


The proof is clear. In fact $\Av^q$ is an orthogonal projection, see Lemma \ref{AvIsProjector} (also a Radon transform, since \eqref{CDforAv} is a double fibration). The relevancy of the averaging operators lies in that they function as replacements of the diagonal embedding in the original version of the Atiyah-Bott formula. 

The operators $\Av^q$ and $\Phi^q\circ \Av^q$ have respective Schwartz kernels
\begin{equation*}
K_{\Av^q},\ K_{\Phi^q\circ \Av} \in C^{-\infty}(\N \times \N;E^q\boxtensor E^{q,*})
\end{equation*}
(we trivialize the density bundle using the density $\m$). We describe their wavefront sets with the aid of the mappings
\begin{equation}\label{iota_f}
\begin{gathered}
\hat \iota_f:\N\times \hat G^q\to \N \times \N,\quad \iota_\Delta:\N\to\N\times \N\\
\hat \iota_f(p,\hat g)=(\a_{\wp^q(\hat g)}(p),f(p)),\quad \iota_\Delta(p)=(p,p).
\end{gathered}
\end{equation}
For an arbitrary smooth map $\iota:\X\to\Y$ between some manifolds $\X$, $\Y$ define 
\begin{equation*}
N^*(\iota)=\set{\eta\in T^*\Y:\exists x\in \X\ \st\ \eta\in T^*_{\iota(x)}\Y,\text{ and }\iota^*\eta=0}.
\end{equation*}
Thus if $\iota$ is an embedding, then $N^*(\iota)$ is the conormal bundle of $\iota(\X)$. In the present situation the map $\hat \iota_f$ need not be an embedding.

\begin{lemma}
The wave front set of $K_{\Phi^q\circ \Av^q}$ consists of the nonzero elements of $N^*(\hat \iota_f)$. 
\end{lemma}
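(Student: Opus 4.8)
The plan is to compute the wave front set of $K_{\Phi^q\circ\Av^q}$ by writing this Schwartz kernel as a push-forward of a pull-back along the double fibration \eqref{CDforAv}, and then invoking the standard Hörmander calculus for the behavior of wave front sets under these operations. First I would observe that the Schwartz kernel of $\Av^q$ is obtained from the constant distribution $1$ (i.e. the smooth density $\hat\mu^q$) on $\N\times\hat G^q$: explicitly, $K_{\Av^q}=(\hat\iota_f\big|_{\phi^q=\id})_*(\text{smooth bundle section})$, so that for $\Phi^q\circ\Av^q$ we get $K_{\Phi^q\circ\Av^q}=(\hat\iota_f)_*(s)$ where $s$ is a smooth section of the appropriate bundle over $\N\times\hat G^q$ (built from $\phi^q$ and the morphisms $\A^q_{\hat g}$, which are smooth in $(\hat g,p)$), and $\hat\mu^q$ has been used to trivialize densities as in the statement. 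Since $s$ is smooth, its wave front set is empty.

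Next I would apply the push-forward theorem for wave front sets. For a smooth map $\iota:\X\to\Y$ (here $\X=\N\times\hat G^q$, $\Y=\N\times\N$, $\iota=\hat\iota_f$) and a compactly supported distribution $s$ on $\X$, one has
\begin{equation*}
\WF(\iota_*s)\subset\set{(\iota(x),\eta)\in T^*\Y:\ (x,\iota^*\eta)\in \WF(s)\cup(\X\times 0)},
\end{equation*}
where the zero section contribution appears precisely because $\iota$ need not be proper or a submersion; this is exactly the situation here since $\hat\iota_f$ need not be an embedding. As $\WF(s)=\emptyset$, the right-hand side reduces to $\set{(\iota(x),\eta):\iota^*\eta=0}$, which is by definition $N^*(\hat\iota_f)$ (intersected with the nonzero covectors). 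This gives the inclusion $\WF(K_{\Phi^q\circ\Av^q})\subset N^*(\hat\iota_f)\setminus 0$.

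For the reverse inclusion I would argue that no cancellation occurs, i.e. that every nonzero element of $N^*(\hat\iota_f)$ genuinely lies in the wave front set. The cleanest route is to note that $K_{\Av^q}$ is, up to the smooth nonvanishing factors coming from $\A^q_{\hat g}$ and $\phi^q$ (which are unitary, hence nowhere zero), the push-forward of the constant $1$ along $\hat\iota_f$; locally, away from the critical values, $\hat\iota_f$ is a fibration onto a submanifold $W$ of $\N\times\N$ and the push-forward is a (nonzero, smooth-density-weighted) delta section on $W$, whose wave front set is exactly the nonzero conormal bundle $N^*W$. One then checks that $N^*(\hat\iota_f)=\overline{\bigcup N^*W}$ and that the smooth unitary twists $\phi^q$, $\A^q_{\hat g}$ cannot kill conormal singularities because they are bundle isomorphisms; more carefully, one can test against $\hat\iota_f$-adapted oscillatory functions and compute that the relevant oscillatory integral has a nonvanishing leading symbol. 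This symbol computation — essentially a stationary-phase / nondegeneracy statement for the phase associated with $\hat\iota_f$ — is the step I expect to require the most care, because it must be done uniformly over the (possibly positive-dimensional and varying) fibers of $\hat\iota_f$ and because the group $\hat G^q$ enters nontrivially; however, the positivity built into the Haar average $\hat\mu^q$ and the unitarity of the $\A^q_{\hat g}$ prevent sign cancellations, so the leading term does not vanish. Combining the two inclusions yields that $\WF(K_{\Phi^q\circ\Av^q})$ consists exactly of the nonzero elements of $N^*(\hat\iota_f)$.
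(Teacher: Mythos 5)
Your structural reformulation is sound and is, for the containment $\WF(K_{\Phi^q\circ\Av^q})\subset N^*(\hat\iota_f)\setminus 0$, a legitimate and arguably more direct route than the paper's: after the change of variables $(p,\hat g)\mapsto(\a_{\wp^q(\hat g^{-1})}(p),\hat g)$ the kernel really is $(\hat\iota_f)_*$ of a smooth section built from $\phi^q_\tensor$, the $\A^q_{\hat g}$ and the Haar density, and since $\N\times\hat G^q$ is compact the push-forward wave front bound applies and gives exactly $N^*(\hat\iota_f)$. The paper instead keeps $\Phi^q$ and $\Av^q$ separate: it identifies $K_{\Phi^q}$ as a smooth section times the conormal delta on the graph of $f$, computes $\WF(K_{\Av^q})$ by composing the relations of $\hat\pi^*$ and $\hat\rho_*$ from the double fibration \eqref{CDforAv}, composes the two relations, and separately identifies the answer with $N^*(\hat\iota_f)=N^*(\iota_f)$ via the scalar factorization through $\N\times G$. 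Your single push-forward buys a shorter upper-bound argument; the paper's decomposition buys the explicit description \eqref{WFofPhiAv} (conormal to $\Y\times f(\Y)$ plus the relation $\pmb\xi=-(f\circ\a_g)^*\pmb\xi''$) that is used verbatim in the transversality analysis of Section \ref{discreteness}.

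The genuine gap is in your reverse inclusion, in two places. First, you assert that $\phi^q$ is unitary, ``hence nowhere zero.'' It is not: $\phi^q:f^*E^q\to E^q$ is only a bundle morphism covering the identity (the $\A^q_{\hat g}$ are unitary, $\phi^q$ is not), and if $\phi^q$ degenerates or vanishes on an open set the kernel loses those singularities — in the extreme case $\phi^q=0$ the kernel is $0$ while $N^*(\hat\iota_f)\setminus 0$ is not empty. So the ``no cancellation from the twist'' step fails as stated; at best one gets equality over the locus where $\phi^q$ is nondegenerate. Second, the identity $N^*(\hat\iota_f)=\overline{\bigcup N^*W}$, with $W$ ranging over the images of the principal strata, is exactly the hard point and you do not justify it: over a singular orbit $\Y$ the fiber of $N^*(\Y\times f(\Y))$ is strictly larger than the limits of conormal fibers coming from nearby principal orbits at a fixed base point, so whether sweeping the base point recovers everything is a nontrivial statement about the orbit-type stratification, not a consequence of the local fibration picture. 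To be fair, the paper's own argument also only establishes the inclusion $\subset$ with full rigor (Hörmander's composition theorem is one-sided), and that inclusion is all that is used downstream — to define $\iota_\Delta^*K_{\Phi^q\circ\Av^q}$ and to derive the transversality condition from \eqref{SimpleFixedPoints} — but since the lemma claims equality, your proposal as written does not prove it, and the two specific claims above would need to be repaired or the statement weakened to an inclusion.
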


The proof is simplified by reduction to the scalar case. The bottom row of \eqref{CDforAv} factors through the analogous diagram in the scalar case (where $\hat G^q$ is replaced by $G$ and $E^q$ be the trivial line bundle),
\begin{equation}\label{ScalarAv}
\begin{tikzcd}
\N & \N\times \hat G^q \arrow[l,"\hat \rho"']\arrow[d,"\id\times \hat\wp"]\arrow[r,"\hat \pi"]&\N\\
& \arrow[lu,"\rho"]\N\times G \arrow[ru,"\pi"'] &
\end{tikzcd}
\end{equation}
with $\pi(p,g)=p$ and $\rho(p,g)=\a_g(p)$. Then the wave front set of the Schwartz kernel of 
\begin{equation*}
\av:C^\infty(\N)\to C^\infty(\N),\quad \av(u)=\rho_*(\pi^*u\,\mu)
\end{equation*}
can be found (easily but tediously) via standard calculations using pull-backs and push-forwards from H\"ormander \cite{Hor71}, likewise that of $f^*\circ \av$. (Here $\mu$ is the normalized Haar measure of $G$.) Focusing on the later, one sees that $\hat \iota_f$ factors as 
\begin{equation*}
\N\times \hat G^q\to \N\times G\to \N \times \N
\end{equation*}
where the first map is $(p,\hat g)=(p,\wp^q(\hat g))$, a submersion, and the second is
\begin{equation*}
\iota_f:\N\times G\to\N,\quad \iota_f(p,g)=(\a_g(p),f(p)),
\end{equation*}
the scalar version of $\hat \iota_f$. Consequently $N^*(\hat \iota_f)=N^*(\iota_f)$. The details are given in Section \ref{WaveFront}.

If $G$ is the trivial group, then $\iota_f$ is the embedding of $\N$ in $\N \times \N$ as the graph of $f$. In this case,
\begin{equation}\label{transversality}
N^*(\iota_f^*)\cap N^*(\iota_\Delta^*) \text{ contains no zero covectors}. 
\end{equation}
is the Atiyah-Bott condition of simplicity of the fixed points of $f$, equivalently, the transversality in $\N \times \N$ of the graph of $f$ and the diagonal. In the present case this is also the condition leading to a formula. Since  $\a_t\circ  f=f\circ \a_t$ for all $t$, $f$ maps orbits of $G$ to orbits of $G$. (Note that, in general, $f(\Y_p)=\Y_{f(p)}$ because the curve $t\mapsto f(\a_t(p))$ is dense in $\Y_{f(p)}$.) Let
\begin{equation*}
\pmb f:\B\to \B
\end{equation*}
denote the corresponding map, which is continuous. Let $\Fix(\pmb f)$ be the set of fixed points of $\pmb f$,
\begin{equation*}
\Fix(\pmb f)=\set{\Y\in \B: f\text{ maps }\Y \text{ to itself}}.
\end{equation*}
It is not required that $f$ fixes $\Y$ pointwise, only that $f(\Y)=\Y$. 

\begin{theorem}
If \eqref{transversality} holds, then $\Fix(\pmb f)$ is a discrete set.
\end{theorem}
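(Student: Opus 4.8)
The plan is to translate the condition \eqref{transversality} into a statement about the geometry of the fixed-point set upstairs in $\N$ and then push it down to $\B$. First I would unwind what $\Fix(\pmb f)$ is in terms of $\N$: an orbit $\Y\in\Fix(\pmb f)$ is precisely a $G$-orbit with $f(\Y)=\Y$. The set of all $p\in\N$ lying on such orbits is $Z=\set{p\in\N:f(p)\in\Y_p}=\set{p:\exists g\in G,\ f(p)=\a_g(p)}$; equivalently, using the scalar map $\iota_f(p,g)=(\a_g(p),f(p))$ of \eqref{ScalarAv}, $Z$ is the image under the projection $\N\times G\to\N$, $(p,g)\mapsto p$, of the set $S=\iota_f^{-1}(\diag)=\set{(p,g):\a_g(p)=f(p)}$. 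Note $Z$ is $G$-invariant and closed (it is the image of the compact set $S$), and $\Fix(\pmb f)$ is exactly the image of $Z$ in $\B$; so it suffices to show $Z$ consists of finitely many $G$-orbits, or what is the same, that $Z$ meets only finitely many orbits.

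The key step is to use \eqref{transversality} to show that $S$ is a submanifold of $\N\times G$ that is transverse, in the appropriate sense, to the fibers of the projection to $\B$, so that $Z$ is a disjoint union of closed submanifolds each of which is a single $G$-orbit. Concretely, $\iota_f$ and $\iota_\Delta$ are both submersions onto their images near the relevant points? — not quite; rather the content of \eqref{transversality} is that $N^*(\iota_f)\cap N^*(\iota_\Delta)=0$, which by the standard transversality-of-maps criterion (two smooth maps $\iota_1:\X_1\to\Y$, $\iota_2:\X_2\to\Y$ are transverse iff $N^*(\iota_1)\cap N^*(\iota_2)$ has no nonzero elements, and then the fiber product $\X_1\times_\Y\X_2$ is a manifold of the expected dimension) tells us that $\iota_f$ is transverse to the diagonal embedding $\iota_\Delta$. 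Hence the fiber product of $\iota_f$ and $\iota_\Delta$ over $\N\times\N$, which is canonically $S=\set{(p,g):\a_g(p)=f(p)}$, is a smooth submanifold of $\N\times G$ of dimension $\dim(\N\times G)+\dim\N-\dim(\N\times\N)=\dim G$. So $S$ is a $(\dim G)$-dimensional closed submanifold of $\N\times G$.

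Now I would identify the $G$-action on $S$: $G$ acts on $\N\times G$ by $h\cdot(p,g)=(\a_h(p),ghg^{-1})=(\a_h p, hg)$ since $G$ is abelian; because $f\circ\a_h=\a_h\circ f$ one checks $h\cdot S=S$, and the orbit of $(p,g)$ is $\set{(\a_h p, hg):h\in G}$, which is a copy of $G$ sitting inside $S$ — an open subset (same dimension $\dim G$) and closed (orbits of compact group actions are closed). Therefore each $G$-orbit in $S$ is a connected component of $S$; since $S$ is compact it has finitely many components, so $S$ consists of finitely many $G$-orbits. Projecting to $\N$, $Z$ consists of finitely many $G$-orbits in $\N$, hence $\Fix(\pmb f)$ is finite, in particular discrete.

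The main obstacle I anticipate is the first reduction step — making rigorous the passage between the wave-front condition \eqref{transversality} on the Schwartz-kernel side and honest transversality of the maps $\iota_f$ and $\iota_\Delta$ — because $\iota_f$ need not be an embedding (the excerpt explicitly warns of this), so one must use the general notion of transversality of maps and the general fiber-product-is-a-manifold theorem rather than the submanifold version, and take care that $N^*(\iota_f)\cap N^*(\iota_\Delta)=\set{0}$ really is the clean-intersection/transversality hypothesis at every point of $S$, uniformly. A secondary point to be careful about: one should make sure $S$ is nonempty-component-by-component a single orbit and not, say, a finite union of orbits glued along lower strata — but since each orbit is both open and closed in $S$ this cannot happen. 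Once these are in place the counting argument is immediate.
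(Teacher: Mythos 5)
Your overall strategy --- realizing the fixed-orbit set upstairs as $S=\iota_f^{-1}(\Delta)\subset\N\times G$, reading \eqref{transversality} as transversality of $\iota_f$ to the diagonal so that $S$ is a compact manifold of dimension $\dim G$, and then decomposing $S$ into open-and-closed pieces --- is sound and genuinely different from the paper's proof (which argues by contradiction with a slice at a putative accumulation orbit, normalizes the displacements $v_j/|v_j|$, and extracts in the limit a nonzero fixed vector of the linearization on the normal space, contradicting \eqref{DetCondition}); your route, once correct, would even yield finiteness directly rather than mere discreteness. However, the step identifying the components of $S$ contains a genuine error. First, the action you write, $h\cdot(p,g)=(\a_h(p),hg)$, does not preserve $S$: requiring $(\a_h(p),hg)\in S$ forces $\a_{h^2g}(p)=\a_{hg}(p)$, i.e.\ $h\in\isotropy_p$. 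The action that does preserve $S$ is $h\cdot(p,g)=(\a_h(p),g)$. Second, and more seriously, the orbit of $(p,g)$ under that correct action is a copy of $\Y_p\cong G/\isotropy_p$, of dimension $\dim G-\dim\isotropy_p$, which is strictly smaller than $\dim S=\dim G$ whenever the isotropy group has positive dimension --- precisely the singular orbits this paper is built to handle (cf.\ Example \ref{ExampleOfAction}, where $G$ is a $2$-torus and the orbit through $(0,0,z_3)$ is a circle). Such an orbit is not open in $S$, so the ``open and closed, hence a component'' argument collapses exactly where it is needed.

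The repair stays entirely within your framework: for each $\Y\in\Fix(\pmb f)$ consider the saturated piece $S_\Y=S\cap(\Y\times G)$ rather than a single $G$-orbit in $S$. Using that $f$ commutes with the action and that the isotropy group $\isotropy_\Y$ is constant along $\Y$, one checks that $S_\Y=\Y\times g_0\isotropy_\Y$ for any $g_0$ with $\a_{g_0}(p_0)=f(p_0)$, $p_0\in\Y$; this is an embedded submanifold of dimension $\dim\Y+\dim\isotropy_\Y=\dim G$, hence open in the $\dim G$-dimensional manifold $S$, and it is closed because $\Y\times G$ is. The sets $S_\Y$ are pairwise disjoint and cover $S$, so compactness of $S$ leaves only finitely many of them, and $\Fix(\pmb f)$ is finite, a fortiori discrete. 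With that correction your argument is complete.
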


The proof is given in Section \ref{discreteness}. 

\smallskip
Let $\Y\in \Fix(\pmb f)$, pick $p_0\in \Y$. Then $f(p_0)\in \Y$ implies $f(p_0)=\a_{g_0^{-1}}(p_0)$ for some $g_0\in G$ and it follows that $f(p)=\a_{g_0^{-1}}(p)$ for every $p\in \Y$, that is, $\a_{g_0}\circ f$ is the identity on $\Y$. If this is the case, then
\begin{equation*}
d(\a_{g_0}\circ f)\big|_{N^*_{p}\Y}:N^*_p\Y\to N^*_p\Y
\end{equation*}
for any $g\in G$ and it makes sense to ask whether 
\begin{equation}\label{SimpleFixedPoints}
d(\a_{g_0}\circ f)\big|_{N^*_p\Y}-\Id:N^*_p\Y\to N^*_p\Y\text{ has trivial kernel}.
\end{equation}

\begin{proposition}
Condition \eqref{transversality} holds if and only if for every $\Y\in \Fix(\pmb f)$, \eqref{SimpleFixedPoints} holds for every $g_0\in G$ such that $\a_{g_0}^{-1}\circ f$ is the identity on $\Y$.
\end{proposition}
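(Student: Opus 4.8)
The plan is to unwind the definition of $N^*(\iota_f)$ and $N^*(\iota_\Delta)$ at a point and recognize, after a linear-algebra computation at each point of $\N$, that a nonzero covector lying in both conormal sets is precisely a nonzero vector in $N^*_p\Y$ fixed by $d(\a_{g_0}\circ f)$. So I would first fix $p\in\N$ and a covector $(\xi,\zeta)\in T^*_{(q,f(p))}(\N\times\N)$ with $q=\a_g(p)$, and compute the two pull-back conditions: $\iota_\Delta^*(\xi,\zeta)=0$ says $\xi+\zeta=0$ (identifying the two cotangent spaces at a diagonal point), so the shared covector has the form $(\xi,-\xi)$ with base point forced to satisfy $\a_g(p)=f(p)$, i.e.\ $p$ lies over a fixed orbit $\Y\in\Fix(\pmb f)$ and $g=g_0^{-1}$ modulo the stabilizer, so that $\a_{g_0}\circ f=\id$ on $\Y$. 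Then $\iota_f^*(\xi,-\xi)=0$: since $\iota_f(p,g)=(\a_g(p),f(p))$, its differential in the $\N$ direction is $p\mapsto (d\a_g, df)$, and pairing with $(\xi,-\xi)$ gives the condition $(d\a_g)^*\xi=(df)^*\xi$ on $T_p\N$; together with the $G$-direction of $d\iota_f$ (differentiating $g\mapsto\a_g(p)$, whose image is $T_p\Y$) vanishing against $\xi$, which forces $\xi\in N^*_p\Y$. On $N^*_p\Y$, and using that $\a_{g_0}$ is an isometry preserving $\Y$ so $d\a_{g_0}$ preserves $N^*_p\Y$, the equation $(d\a_g)^*\xi=(df)^*\xi$ rearranges (with $g$ chosen as $g_0^{-1}$ on $\Y$, using $f=\a_{g_0^{-1}}$ there) exactly to $\big(d(\a_{g_0}\circ f)\big|_{N^*_p\Y}\big)^*\xi=\xi$, i.e.\ $\xi$ is a fixed covector.

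Next I would package this: \eqref{transversality} fails iff there exists such a nonzero $(\xi,-\xi)$, iff there exists $\Y\in\Fix(\pmb f)$, a point $p\in\Y$, a $g_0$ with $\a_{g_0}\circ f=\id$ on $\Y$, and a nonzero $\xi\in N^*_p\Y$ fixed by the transpose of $d(\a_{g_0}\circ f)|_{N^*_p\Y}$ — equivalently, since a linear endomorphism and its transpose have the same eigenvalues, by $d(\a_{g_0}\circ f)|_{N^*_p\Y}$ itself. That is exactly the negation of \eqref{SimpleFixedPoints}. I would then remark that the choice of base point $p$ within $\Y$ and the choice of $g_0$ (which is determined up to the stabilizer of $\Y$, a subtorus of $G$) do not matter: conjugating by $\a_h$ for $h\in G$ carries $N^*_p\Y$ to $N^*_{\a_h(p)}\Y$ intertwining the two copies of $d(\a_{g_0}\circ f)$, and changing $g_0$ by an element of the stabilizer composes $d(\a_{g_0}\circ f)|_{N^*_p\Y}$ with $d\a_s|_{N^*_p\Y}$ for $s$ in the stabilizer; but $\a_s$ is the identity on $\Y$, hence $d\a_s$ is the identity on $T_p\Y$ and, because $\a_s$ is an isometry fixing $\Y$ pointwise, also on the normal space $N^*_p\Y$. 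This shows the condition in the proposition is independent of all the choices and matches \eqref{transversality} orbit by orbit; since $\Fix(\pmb f)$ is discrete (previous theorem), running over all $\Y\in\Fix(\pmb f)$ gives the stated equivalence.

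The main obstacle I expect is the careful bookkeeping of cotangent identifications and the role of the stabilizer subgroup. Two technical points need care: first, that $d\a_s|_{N^*_p\Y}=\id$ when $\a_s$ fixes $\Y$ pointwise — this uses that $\a_s$ is an isometry, so its differential at $p$ is orthogonal and preserves $T_p\Y$ hence also its orthogonal complement, on which it must act as an isometry, but one still has to see it is the identity there and not merely orthogonal; this follows because the fixed-point set of the isometry $\a_s$ is a totally geodesic submanifold and $d\a_s$ acts trivially on its normal space only if... — actually here one should instead argue directly that $g\mapsto\a_g$ near $s$ keeps $\Y$ fixed pointwise only on the stabilizer subtorus, and the relevant statement is just that $d(\a_{g_0'}\circ f)$ and $d(\a_{g_0}\circ f)$ on $N^*_p\Y$ differ by $d\a_s$ with $s$ in the stabilizer, which has trivial kernel precisely when the other does, so the ``has trivial kernel'' property is what is genuinely choice-independent even if the endomorphism is not. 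Second, I must make sure the identification $N^*(\hat\iota_f)=N^*(\iota_f)$ (already established in the excerpt) lets me work with the scalar map $\iota_f:\N\times G\to\N\times\N$ throughout, so that the $G$-variable in $d\iota_f$ is exactly what produces the constraint $\xi\in N^*_p\Y$. With those two points handled, the rest is the linear-algebra identity and the enumeration over $\Fix(\pmb f)$.
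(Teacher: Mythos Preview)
Your core computation is correct and matches the paper's route: the paper (Section~\ref{discreteness}, leading to Proposition~\ref{WFvsDet}) reads off the same biconditional from the already-computed description \eqref{WFofPhiAv} of $\WF(K_{\Phi\circ\Av})$, which is exactly the nonzero part of $N^*(\iota_f)$. Your direct unwinding of $N^*(\iota_f)\cap N^*(\iota_\Delta)$ reproduces that argument. Once you reach ``\eqref{transversality} fails iff there exist $\Y$, $p\in\Y$, $g_0$ with $\a_{g_0}\circ f=\id$ on $\Y$, and nonzero $\xi\in N^*_p\Y$ fixed by $(\a_{g_0}\circ f)^*$'', you are done: this is precisely the negation of ``for every $\Y$ and every such $g_0$, \eqref{SimpleFixedPoints} holds''. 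No further work is needed.

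Your subsequent paragraph on choice-independence is both unnecessary and partly wrong. It is unnecessary because the proposition is stated with ``for every $g_0$'', so the quantifier already ranges over the full isotropy coset; the invocation of discreteness of $\Fix(\pmb f)$ is likewise irrelevant here. It is wrong in two places. First, as you yourself suspected, $d\a_s\big|_{N^*_p\Y}$ need \emph{not} be the identity when $s$ lies in the isotropy group: take $G=S^1\times S^1$ acting on $S^3\subset\C^2$ coordinatewise, $p=(1,0)$, $s=(1,\omega)$; then $\a_s$ fixes the orbit $\{(z,0)\}$ pointwise but rotates the normal direction by $\omega$. Second, your attempted repair (``the `has trivial kernel' property is choice-independent'') confuses trivial kernel of $T$ with trivial kernel of $\Id-T$: if $T'=S\,T$ with $S$ invertible, then $\ker T=\ker T'$, but $\ker(\Id-T)$ and $\ker(\Id-T')$ are unrelated in general. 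In fact \eqref{SimpleFixedPoints} genuinely depends on the choice of $g_0$ within the isotropy coset, which is why the proposition (and the paper's Proposition~\ref{WFvsDet}) demands it for all such $g_0$. Drop that paragraph and your argument is complete.
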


\begin{theorem}\label{TheoremA}
Suppose \eqref{transversality} holds. Then $\Fix(\pmb f)$ is finite and, in each degree, the pull-back of the Schwartz kernel $K_{\Phi^q\circ \Av^q}$ of $\Phi^q\circ \Av^q$ by $\iota_\Delta$ is well defined. The resulting distribution
\begin{equation*}
\iota_\Delta^* (K_{\Phi^q\circ \Av^q})\in C^{-\infty}(\N; \iota_\Delta^* (E^q\boxtensor {E^q}^*))
\end{equation*}
is of order $0$ and  supported on 
\begin{equation*}
\bigcup_{\Y\in \Fix(\pmb f)} \Y.
\end{equation*}
Using the canonical section $\calI_\tensor^q\in  C^\infty(\N;{E^q}^*\otimes E^q)$ that corresponds to the identity map $\Id^q:{E^q}^*\to {E^q}^*$ we have
\begin{equation}\label{AbstractRHSFormula}
L_{f,\phi}=\sum_{q=0}^m (-1)^q \langle \iota_\Delta^*(K_{\Phi^q\circ \Av}^q), \calI_\tensor^q\rangle.
\end{equation}
\end{theorem}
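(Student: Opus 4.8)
The plan is to combine the three ingredients that have been set up: the finiteness of $\Fix(\pmb f)$, the Atiyah--Bott-style trace identity for geometric endomorphisms on a complex, and the Schwartz-kernel/wave-front calculus. First, I would establish finiteness of $\Fix(\pmb f)$: the preceding theorem gives that it is discrete, and $\B$ is compact (being the continuous image of the compact $\N$), so discreteness forces finiteness. Next, the analytic heart: I would argue that under \eqref{transversality}, equivalently \eqref{SimpleFixedPoints} via the preceding proposition, the wave front set of $K_{\Phi^q\circ\Av^q}$, which by the earlier Lemma is $N^*(\hat\iota_f)\setminus 0 = N^*(\iota_f)\setminus 0$, is disjoint from the conormal $N^*(\iota_\Delta)\setminus 0$ of the diagonal. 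This is precisely the transversality hypothesis, phrased so that H\"ormander's pull-back theorem applies: $\iota_\Delta^* K_{\Phi^q\circ\Av^q}$ is then a well-defined distribution, and a symbol count (the order of $K_{\Phi^q\circ\Av^q}$ coming from the normalized Haar measure on $\hat G^q$ pushed forward, and the codimension bookkeeping in the pull-back) gives that it has order $0$. Its support lies in $\{p : (p,p)\in \hat\iota_f(\N\times\hat G^q)\}$, i.e. the set of $p$ with $f(p)=\a_{\wp^q(\hat g)}(p)$ for some $\hat g$; projecting to $\B$ this is exactly $\Fix(\pmb f)$, so the support is $\bigcup_{\Y\in\Fix(\pmb f)}\Y$.

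The remaining step is to identify $\sum_q(-1)^q\langle\iota_\Delta^*(K_{\Phi^q\circ\Av^q}),\calI_\tensor^q\rangle$ with $L_{f,\phi}$. Here I would use the abstract homological algebra of the Atiyah--Bott argument (\cite{AtBo67}) applied to the complex \eqref{TheGeneralComplex}, together with the fact, recorded in the Lemmas above, that each $\Av^q$ is an \emph{orthogonal projection} of $C^\infty(\N;E^q)$ onto $C^\infty(\B;\E^q)$, and that $\Phi^q$ restricts to the chain map on \eqref{TheGeneralComplex} inducing $\pmb\Phi^q$ on cohomology. The point of pairing $\iota_\Delta^* K_{\Phi^q\circ\Av^q}$ with $\calI_\tensor^q$ is that this pairing computes a ``distributional trace'' of $\Phi^q\circ\Av^q$ as an operator on $C^\infty(\N;E^q)$; because $\Av^q$ projects onto the subcomplex and $\Phi^q$ commutes with it, this equals the trace of the induced operator on $C^\infty(\B;\E^q)$ in the sense that makes the graded alternating sum collapse, via Hodge theory and Proposition \ref{Ellipticity} (finite-dimensionality of $H^q(\B,\E)$), to $\sum_q(-1)^q\tr\pmb\Phi^q = L_{f,\phi}$. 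Concretely I expect this to go through a heat-kernel or resolvent regularization: insert $e^{-tP_q}$, use McKean--Singer to replace the supertrace of $\Phi^q\circ\Av^q\circ e^{-tP_q}$ by $\sum_q(-1)^q\tr\pmb\Phi^q$ for all $t$, and then let $t\to 0$, the localization at $\Fix(\pmb f)$ being guaranteed by the wave-front statement so that only the distributional pairing survives.

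The main obstacle I anticipate is the passage from the operator-theoretic trace $\sum_q(-1)^q\tr(\Phi^q|_{C^\infty(\B;\E^q)})$ to the distributional pairing $\sum_q(-1)^q\langle\iota_\Delta^* K_{\Phi^q\circ\Av^q},\calI_\tensor^q\rangle$. The subtlety is that $\Phi^q\circ\Av^q$ is \emph{not} trace class as an operator on all of $C^\infty(\N;E^q)$ (its Schwartz kernel is genuinely singular along $N^*(\iota_f)$), so one cannot naively take a trace; the identity holds only after the alternating sum over $q$ produces the cancellations that are the substance of the Atiyah--Bott method, or equivalently only after restricting to the finite-dimensional cohomology. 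Making this rigorous requires: (i) justifying that $\iota_\Delta^* K_{\Phi^q\circ\Av^q}$, paired with the smooth section $\calI_\tensor^q$, is a legitimate operation (done by the wave-front disjointness above); (ii) showing the alternating sum of these pairings is a homotopy invariant of the chain map $\Phi^\bullet$ — which I would obtain by a standard argument: if $\Phi^\bullet$ and $\Psi^\bullet$ are chain-homotopic via $h^q$ built from operators with controlled wave front sets (e.g. smoothing in the orbit directions), then the difference of the alternating sums telescopes; and (iii) evaluating one convenient representative — take the identity endomorphism composed with $\Av^q$, whose distributional pairing with $\calI_\tensor^q$ reduces, orbit by orbit over $\Fix(\pmb f)$, to the ordinary Atiyah--Bott local contribution and whose alternating sum is the Euler characteristic, matching $L_{\id}$. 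Steps (ii)--(iii) together pin down the constant and complete the identification.
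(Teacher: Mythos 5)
Your plan follows the paper's actual proof: finiteness of $\Fix(\pmb f)$ from discreteness plus compactness of $\B$; well-definedness of $\iota_\Delta^*K_{\Phi^q\circ\Av^q}$ from the wave-front disjointness via H\"ormander's pull-back theorem; and the trace formula by mollifying with $e^{-sP_q}$, using acyclicity of the nonzero joint eigenspace complexes (the McKean--Singer cancellation) to get $L_{f,\phi}=\sum_q(-1)^q\Tr(\Phi^q_s\circ\Av^q)$ for every $s>0$, and then letting $s\to0^+$ with convergence of the pulled-back kernels in the $C^{-\infty}_W$ topology. One remark: your steps (ii)--(iii) (homotopy invariance of the pairings and normalization against the identity endomorphism) are unnecessary, because the identity $\Tr(\Phi^q_s\circ\Av^q)=\Tr\big(\Phi^q_s\big|_{L^2(\B;\E^q)}\big)$ holds exactly in each degree and for each $s>0$ --- $\Av^q$ is the orthogonal projection onto $L^2(\B;\E^q)$ and $\Phi^q_s$ preserves that subspace --- so no constant remains to be pinned down.
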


For the proof see Section \ref{Limits}.

\medskip
The final step is the computation of the pairings in \eqref{AbstractRHSFormula}, completed in Section~\ref{sLimit}, see Theorem \ref{TheoremC}. 

Choose, for each $\Y\in \Fix(\pmb f)$ and $q = {0,1,\dots,m}$, an element $\hat g_\Y^q\in \hat G^q$ such that $\a_{\wp^q(\hat g_\Y^q)}\circ f$ fixes every point of $\Y$. This is possible precisely because $\Y\in \Fix(\pmb f)$. The map $\phi^q:f^*E^q\to E^q$ gives a map $\phi^q(p):E^q_{f(p)}\to E^q_p$, so $\phi^q(p) \circ \A_{(\hat g_\Y^q)^{-1}}^q$ maps $E^q_p$ to itself for any $p\in \Y$. Its trace is independent of $p\in \Y$ and the choice of $\hat g_\Y^q$ (subject to $\a_{\wp^q(\hat g_\Y^q)}\circ f$ being the identity on $\Y$). The isotropy group $\isotropy_\Y\subset G$ of the action of $G$ on $\N$ at a point of $\Y$ is independent of the point; let $\preisotropy_\Y^q\subset \hat G^q$ be the preimage of $\isotropy_\Y$ by $\wp^q$; this is a compact subgroup, not necessarily connected. Let $\hat G_\Y^q\subset \hat G^q$ be a compact connected subgroup of minimal dimension transverse to $\preisotropy_\Y^q$ at the identity element. The normalized Haar measures $\hat \mu_{\hat G^q}$ of $\hat G^q$ and $\hat \mu_{\preisotropy_\Y^q}$ of $\preisotropy_\Y^q$ determine a Haar measure $\hat \mu_{\hat G_\Y^q}$ on $\hat G_\Y^q$ via the requirement that $\hat \mu_{\hat G^q}= \hat \mu_{\preisotropy_\Y^q}\otimes \hat \mu_{\hat G_\Y^q}$ near the identity. For any fixed $p\in \Y$, 
\begin{equation*}
G_\Y^q\ni \hat g \mapsto \a_{\wp^q(\hat g)}(p)\in \Y
\end{equation*}
is a finitely sheeted covering. Let $\mathfrak k_\Y^q$ be the number of sheets. With this notation, Theorem~\ref{TheoremC} gives

\begin{theorem}\label{TheoremB} The Lefschetz number \eqref{LefschetzNumber} of the cochain map \eqref{CD} is
\begin{equation*}
L_{f,\phi} =\sum_{\Y\in \Fix(\pmb f)} \sum_{q=0}^m(-1)^q\frac{\hat \mu_{\hat G_{\Y}^q}(\hat G_{\Y}^q)}{\mathfrak k_\Y^q}\int_{\preisotropy_\Y^q}\frac{\tr\big(\phi^q(p_\Y)\circ \A_{\hat h(\hat g_\Y^q)^{-1}\big)}^q}{\big|\det\big[(\a_{\wp(\hat g_\Y^q\hat h^{-1})}\circ f)^*\big|_{N^*_{p_\Y}\Y}-\Id\big]\big|}\, d\hat \mu_{\preisotropy_\Y^q} (\hat h).
\end{equation*}
Here $p_\Y$ is some arbitrarily fixed point of $\Y$.
\end{theorem}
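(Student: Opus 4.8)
The plan is to start from the abstract formula \eqref{AbstractRHSFormula} of Theorem \ref{TheoremA}, which already localizes the Lefschetz number to the disjoint union $\bigcup_{\Y\in\Fix(\pmb f)}\Y$ and expresses it as a sum over $q$ of the pairing of $\iota_\Delta^*(K_{\Phi^q\circ\Av^q})$ with the canonical section $\calI_\tensor^q$. Since $\Fix(\pmb f)$ is finite and the pulled-back kernel is an order-$0$ distribution supported on the (disjoint) closed orbits $\Y$, the pairing splits as a sum over $\Y\in\Fix(\pmb f)$, and it suffices to compute, for each fixed $\Y$ and each $q$, the contribution $\langle \iota_\Delta^*(K_{\Phi^q\circ\Av^q})\big|_\Y,\calI_\tensor^q\big|_\Y\rangle$. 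I would therefore reduce the whole theorem to a purely local statement near a single $\Y\in\Fix(\pmb f)$, which is the content of Theorem \ref{TheoremC} referenced in the text.

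Next I would unwind the averaging operator. By \eqref{defOfAverage} and \eqref{DefOfPhiq}, the kernel of $\Phi^q\circ\Av^q$ is an integral over $\hat G^q$ of the (distributional) pull-back–push-forward of the identity along the maps in \eqref{CDforAv} composed with $f$; its wave front set is $N^*(\hat\iota_f)$ by the second Lemma, and pulling back by $\iota_\Delta$ is legitimate exactly because \eqref{transversality} forces $N^*(\hat\iota_f)\cap N^*(\iota_\Delta)$ to contain no nonzero covector (this is the Proposition relating \eqref{transversality} to \eqref{SimpleFixedPoints}). The resulting order-$0$ distribution on $\Y$ is, by the standard stationary-phase/transversal-intersection computation for a Radon-type double fibration, a sum of Dirac-type contributions coming from those $\hat g\in\hat G^q$ for which $\a_{\wp^q(\hat g)}\circ f$ fixes a point of $\Y$; by the normalization choices (picking $\hat g_\Y^q$ with $\a_{\wp^q(\hat g_\Y^q)}\circ f$ the identity on $\Y$, and the subgroup $\preisotropy_\Y^q$) these $\hat g$ range over the coset $\hat g_\Y^q\,\preisotropy_\Y^q$. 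For each such $\hat h\in\preisotropy_\Y^q$ the transverse Jacobian factor is $\big|\det\big[(\a_{\wp(\hat g_\Y^q\hat h^{-1})}\circ f)^*\big|_{N^*_{p_\Y}\Y}-\Id\big]\big|$ — a genuine nonzero number by \eqref{SimpleFixedPoints} — and pairing with $\calI_\tensor^q$ turns the bundle factor $\phi^q(p_\Y)\circ\A^q_{\hat h(\hat g_\Y^q)^{-1}}$ (an endomorphism of $E^q_{p_\Y}$) into its trace. The combinatorial constants $\hat\mu_{\hat G_\Y^q}(\hat G_\Y^q)$ and $1/\mathfrak k_\Y^q$ arise from disintegrating the Haar measure $\hat\mu^q$ over the fibers of $\wp^q$ and over the covering $\hat G_\Y^q\to\Y$, together with the factorization $\hat\mu_{\hat G^q}=\hat\mu_{\preisotropy_\Y^q}\otimes\hat\mu_{\hat G_\Y^q}$ near the identity; keeping track of exactly these factors — so that the final answer is independent of the choice of $p_\Y$, of $\hat g_\Y^q$, and of the transverse subgroup $\hat G_\Y^q$ — is where most of the bookkeeping lies.

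The main obstacle, I expect, is the local normal-form analysis that justifies the transverse-intersection computation in the singular foliated setting: one must choose coordinates near $p_\Y$ adapted to the orbit $\Y$ (using the slice theorem for the compact group action, so that $N^*_{p_\Y}\Y$ is identified with the conormal directions and the isotropy $\isotropy_\Y$ acts linearly on the slice), verify that along $\Y$ the map $\hat\iota_f$ has constant rank with image whose conormal is exactly the set over which we integrate, and then apply the pull-back theorem for distributions (Hörmander \cite{Hor71}) fiber-by-fiber over $\hat G^q$. The delicate point is that $\hat\iota_f$ is not an embedding — the orbit directions are ``absorbed'' — so the push-forward $\hat{\pmb\rho}_*$ integrates over positive-dimensional fibers, and one has to confirm that after pulling back by $\iota_\Delta$ the fiber integration collapses precisely to integration over $\preisotropy_\Y^q$ against $\hat\mu_{\preisotropy_\Y^q}$, with the orbit-direction integration producing the factor $\hat\mu_{\hat G_\Y^q}(\hat G_\Y^q)/\mathfrak k_\Y^q$. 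Once this normal form is in place the stationary-phase evaluation is the classical Atiyah–Bott computation applied in the normal bundle $N^*_{p_\Y}\Y$, and assembling the pieces over all $\Y$ and all $q$, then substituting into \eqref{AbstractRHSFormula}, yields the stated formula.
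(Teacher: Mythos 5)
Your proposal is correct and follows essentially the same route as the paper: reduce via Theorem \ref{TheoremA} to the pairings $\langle \iota_\Delta^*(K_{\Phi^q\circ \Av^q}),\calI_\tensor^q\rangle$, localize to each fixed orbit, and evaluate the contribution in slice coordinates adapted to $\Y$ with the Haar-measure factorization $\hat\mu_{\hat G^q}=\hat\mu_{\preisotropy_\Y^q}\otimes\hat\mu_{\hat G_\Y^q}$ producing the constants. The only difference is one of technique in the local evaluation: where you invoke an abstract stationary-phase/transversal-intersection argument, the paper constructs an explicit smooth approximating sequence $K_{f^*,k}$ from a bump function and the exponential map and computes the limit $k\to\infty$ by hand (Theorem \ref{TheoremC}), precisely because $\hat\iota_f$ is not an embedding and the fiber integration must be tracked concretely.
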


\section{A model in de Rham cohomology}

Recall that $\Wedge^q \Hor^*\subset \Wedge^q \N$ is the kernel of interior multiplication by $\T$,
\begin{equation*}
\inner_\T:\Wedge^q\N\to\Wedge^{q-1}\N
\end{equation*}
be interior multiplication by $\T$. Assuming without loss of generality that the invariant Riemannian metric $\gg$ satisfies $\gg(\T,\T)=1$, define the one-form $\theta$ by
\begin{equation}\label{PseudoConnection}
\langle \theta, v\rangle=\gg(\T(p),v), \quad v\in T_p\N.
\end{equation}
It satisfies $\inner_\T\theta=1$, $\Lie_\T\theta=0$ and $\inner_\T d\theta=0$. The map 
\begin{equation*}
\pi:\Wedge^q\N\to\Wedge^q\N, \quad \pi \eta= \eta-\theta\wedge \inner_\T \eta
\end{equation*}
is the projection on $\Wedge^q\Hor^*$ with kernel $\theta\wedge \Wedge^{q-1}\N$. With the maps
\begin{equation*}
\Dee_q:C^\infty(\N;\Wedge^q\Hor^*)\to C^\infty(\N;\Wedge^{q+1}\Hor^*),\quad \Dee_q\psi=\pi d\psi.
\end{equation*}
we get a sequence 
\begin{equation}\label{HorSequence}
C^\infty(\N)\xrightarrow{\Dee_0}C^\infty(\N;\Wedge^1\Hor^*)\xrightarrow{\Dee_1}\cdots \xrightarrow{\Dee_{n-2}}C^\infty(\N;\Wedge^{n-1}\Hor^*)
\end{equation}
which need not be a complex. The operator $D_0$ resembles a connection and the form $\theta$ is like the connection form on the bundle of unit frames (a circle bundle) of a Hermitian connection on a complex line bundle over $\B$. With some compatible almost CR  structure on $\N$ it is a pseudo-hermitian connection as introduced by Webster \cite{Webster1978}.

We use the metric $\gg$ to give $\Wedge^q\Hor^*$ a Hermitian metric. We will show that
\begin{equation*}
P_q=\Dee_{q}^\star \circ \Dee_{q}+\Dee_{q-1}\circ\Dee_{q-1}^\star-\Lie_\T^2
\end{equation*}
is elliptic, cf. Proposition \ref{Ellipticity}. Here $\Lie_\T$ is the actual Lie derivative with respect to $\T$ on differential forms on $\N$. To show that $P_q$ is elliptic we show that the symbol sequence of \eqref{HorSequence} is exact in the complement of the span of $\theta$. Namely, let $f:\N\to \R$ be some smooth function. Then, at any $p\in \N$ where $df(p)\ne 0$ we have
\begin{equation*}
\lim_{s\to\infty}\frac{1}{s}e^{-\im s f}\Dee_q(e^{\im s f}u)=\im (df-\T f\,\theta)\wedge u, \quad u\in C^\infty(\N,\Wedge^q\Hor^*),
\end{equation*}
so
\begin{equation*}
\sym(\Dee_q)(\pmb \xi)(u)=\im \big(\pmb \xi -\langle \pmb \xi,\T\rangle\theta\big)\wedge u.
\end{equation*}
From this formula we see that 
\begin{equation}
\cdots\to \Wedge^{q-1}\Hor_p^*\xrightarrow{\sym(\Dee_{q-1})(\pmb \xi)}\Wedge^q\Hor_p^*\xrightarrow{\sym(\Dee_{q+1})(\pmb\xi)}\Wedge^{q+1}\Hor_p^*\to \cdots
\end{equation}
is exact when $\pmb \xi -\langle \pmb \xi,\T\rangle\theta\ne 0$ with the same argument as in the case of the actual de Rham sequence. Thus the Laplacians of \eqref{HorSequence} fail to be elliptic on the span of $\theta$. Adding the term $-\Lie_\T^2$ gives ellipticity:
\begin{equation*}
\sym(P_q)(\pmb \xi)=\|\pmb \xi -\langle \pmb \xi,\T\rangle\,\theta\|^2\Id +\langle \pmb \xi,\T\rangle^2\Id.
\end{equation*}

Define
\begin{equation}\label{PseudoForms}
C^\infty(\B;\E^q)=\set{u\in C^\infty(\N;\Wedge^q\Hor^*):\Lie_\T u=0}.
\end{equation}
Since $d\circ\Lie_\T=\Lie_T \circ d$, $du\in C^\infty(\B;\E^{q+1})$ if $u\in C^\infty(\B;\E^q)$. This space consists of basic forms \cite[p. 38]{Molino88} with respect to the foliation determined by $\T$, but not necessarily of the singular foliation determined by the orbits of $G$. With the notation $\D_q u $ in place of $du$ we have a complex
\begin{equation}\label{TheHComplex}
C^\infty(\B; \E^0)\xrightarrow{\D_0}C^\infty(\B; \E^1)\xrightarrow{\D_1}\cdots \xrightarrow{\D_{n-2}}C^\infty(\B; \E^{n-1})
\end{equation}
whose cohomology groups are finite-dimensional because of Proposition \ref{Ellipticity}.

A smooth map $f:\N\to \N$ such that $f_*\T=\T$ induces by pull-back geometric endomorphisms with the right equivariant properties. The condition for the validity of the Atiyah-Bott formula for the Lefschetz number of $f$ is a condition on $f$, so Theorems \ref{TheoremA} and \ref{TheoremB} are valid.

\begin{example}\label{ActualLefschetz} Suppose that $\B$ is a smooth closed manifold and $\N=\B\times S^1$. The vector field $\T$ is the standard angular derivative and the metric on $\N$ is a product metric. The map $\N\to \B$ is the canonical projection, the spaces $C^\infty(\B; \E^q)$ are canonically $C^\infty(\B;\Wedge^q\B)$, and $\D$ is just the de Rham differential on $\B$. In the Atiyah-Bott formula for the Lefschetz number the condition on $\pmb f$ is that the fixed points are simple. This translates into the condition that the graph $\pmb \Gamma$ of $\pmb f$ in $\B\times \B$ is transverse to the diagonal $\pmb\Delta\subset \B\times \B$,  equivalently, that the conormal bundles of these two submanifolds intersect only at zero. From the operator point of view, the wave front sets of  the Schwartz kernels of the operators $\pmb f^*$ and $\pmb \iota^*$ (the embedding of the diagonal) do not intersect. This is the interpretation of the transversality condition that, with suitable modifications, will work in our case.
\end{example}


\section{Finite dimensionality of cohomology}\label{s-FiniteDimCoho}

In the rest of the paper we work with the general setup of the introduction. We now assume that 
\begin{equation}\label{DeltaPlusL2}
P_q=\Dee_{q}^\star \circ \Dee_{q}+\Dee_{q-1}\circ\Dee_{q-1}^\star-\Lie_\T^2\text{ on }C^\infty(\N;E^q)\text{ is elliptic}
\end{equation}
and prove that the cohomology groups of the complex \eqref{TheGeneralComplex} are finite-dimensional. As already mentioned this will be a consequence of the Hodge theory for the complex \eqref{TheGeneralComplex} and will follow from a careful analysis of the domains of the operators involved. 

Recalling that $C^\infty(\N;E^q)$ is a pre-Hilbert space, let $L^2(\N;E^q)$ denote its completion and let $L^2(\B;\E^q)\subset L^2(\N;E^q)$ denote the kernel of $\Lie_\T$. 

We give $\Dee$ and $\Lie_\T$ their maximal domains:
\begin{gather*}
\Dom^q(\Dee)=\set{u\in L^2(\N;E^q):\Dee_q u\in L^2(\N;E^{q+1})},\\
\Dom^q(\Lie_\T)=\set{u\in L^2(\N;E^q):\Lie_\T u\in L^2(\N;E^q)}.
\end{gather*}
With these domains they are closed, densely defined. In particular, $L^2(\B;\E^q)$ is a closed subspace of $L^2(\N;E^q)$.

Let
\begin{equation*}
\Dom^q(\D)=\set{u\in L^2(\B;\E^q):\Dee_q u\in L^2(\N;E^{q+1})}.
\end{equation*}
In view of \eqref{Commutators}, $\Dee_q u\in L^2(\N;\E^{q+1})$ if $u\in \Dom^q(\D)$ . For such $u$ we write $\D_q u$ in place  of $\Dee_q u$.

\begin{proposition}\label{domD}
If $u\in \Dom^q(\D)$ then $\D_q u\in \Dom^{q+1}(\D)$ and $\D_{q+1}\D_q u=0$. Furthermore, 
\begin{equation}\label{DWithDomain}
\D_q:\Dom^q(\D)\subset L^2(\B;\E^q) \to  L^2(\B;\E^{q+1})
\end{equation}
is closed, densely defined.
\end{proposition}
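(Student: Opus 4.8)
The plan is to reduce the whole proposition to one density statement. Note first that $\Dom^q(\D)=L^2(\B;\E^q)\cap\Dom^q(\Dee)$ and that $\D_q$ is just the restriction of $\Dee_q$ to this domain. Since $\Dee_q$ with its maximal domain is closed and $L^2(\B;\E^q)$ is a closed subspace of $L^2(\N;E^q)$, closedness of \eqref{DWithDomain} is then automatic: if $u_n\in\Dom^q(\D)$, $u_n\to u$ and $\D_qu_n\to w$ in $L^2$, then $u\in L^2(\B;\E^q)$ (closed subspace) while $u\in\Dom^q(\Dee)$ and $\Dee_qu=w$ (closedness of $\Dee_q$), so $u\in\Dom^q(\D)$ and $\D_qu=w$. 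Everything else follows once I prove that $C^\infty(\B;\E^q)$ is dense in $\Dom^q(\D)$ for the graph norm $u\mapsto\|u\|_{L^2}+\|\Dee_qu\|_{L^2}$. Indeed, graph density immediately gives dense-definedness ($L^2$-density of $C^\infty(\B;\E^q)\subset\Dom^q(\D)$ in $L^2(\B;\E^q)$), and it gives the complex property as follows. For $u\in\Dom^q(\D)$ one already knows -- equivalently, $\Lie_\T\D_qu=\Dee_q\Lie_\T u=0$ as distributions -- that $\D_qu\in L^2(\B;\E^{q+1})$. Choosing $w_n\in C^\infty(\B;\E^q)$ with $w_n\to u$ and $\Dee_qw_n\to\Dee_qu$ in $L^2$, the second relation of \eqref{Commutators} gives $\Dee_{q+1}\Dee_qw_n=0$ for every $n$; letting $n\to\infty$ and using that $\Dee_qw_n\to\Dee_qu$ in $L^2$ hence in $C^{-\infty}(\N;E^{q+1})$, while $\Dee_{q+1}$ is continuous on distributions, yields $\Dee_{q+1}\Dee_qu=0$. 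In particular $\Dee_{q+1}(\D_qu)=0\in L^2(\N;E^{q+2})$, so $\D_qu\in\Dom^{q+1}(\D)$ and $\D_{q+1}\D_qu=0$.

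To prove the graph-norm density I would proceed in two steps. First, since $\N$ is closed and $\Dee_q$ is a first order differential operator, its minimal and maximal closed extensions coincide (a Friedrichs mollifier argument), so $C^\infty(\N;E^q)$ is dense in $\Dom^q(\Dee)$ in the graph norm; pick $v_n\in C^\infty(\N;E^q)$ with $v_n\to u$ and $\Dee_qv_n\to\Dee_qu$ in $L^2$. Second, I would project onto $\B$-sections by averaging: put $w_n=\Av^qv_n$. This uses three facts about $\Av^q$, to be recorded in the functional-analytic preliminaries: (i) $\Av^q$ extends to a bounded operator on $L^2(\N;E^q)$, namely the orthogonal projection onto $L^2(\B;\E^q)$ (Lemma~\ref{AvIsProjector}) -- the usual statement that averaging the unitary family $\{\pmb\A^q_{\hat g}\}_{\hat g\in\hat G^q}$ against normalized Haar measure projects onto the common fixed subspace $\ker\Lie_\T=L^2(\B;\E^q)$; (ii) $\Av^q$ preserves smoothness, and since $\Lie_\T\Av^q=0$, it maps $C^\infty(\N;E^q)$ into $C^\infty(\B;\E^q)$; (iii) $\Av^q$ intertwines the operators of \eqref{TheGeneralComplex}: $\Dee_q\circ\Av^q=\Av^{q+1}\circ\Dee_q$ on $C^\infty(\N;E^q)$. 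Granting these, $w_n\in C^\infty(\B;\E^q)$, $w_n=\Av^qv_n\to\Av^qu=u$ in $L^2$ (as $u\in L^2(\B;\E^q)$ is fixed by $\Av^q$), and $\Dee_qw_n=\Av^{q+1}\Dee_qv_n\to\Av^{q+1}\Dee_qu=\Dee_qu$ in $L^2$ (the last equality since $\Dee_qu\in L^2(\B;\E^{q+1})$ is fixed by $\Av^{q+1}$); so $w_n\to u$ in the graph norm.

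The main obstacle is property (iii), together with the precise $L^2$-theory of $\Av^q$. Differentiating under the integral sign, (iii) reduces to $\Dee_q\circ\pmb\A^q_t=\pmb\A^{q+1}_t\circ\Dee_q$ for $t\in\R$, which I would get from $\Dee_q\circ\nabla^q_\T=\nabla^{q+1}_\T\circ\Dee_q$ (part of \eqref{Commutators}) and uniqueness for the transport equation $\dot w=-\nabla^{q+1}_\T w$: both $\pmb\A^{q+1}_t\Dee_qv$ and $\Dee_q\pmb\A^q_tv$ solve it with datum $\Dee_qv$ at $t=0$. The genuinely delicate point is the passage from the flow parameter to the averages, because $\hat G^q$ and $\hat G^{q+1}$ are a priori nonisomorphic tori; I would sidestep this by using that $\Av^q$ also equals the mean ergodic average $\lim_{T\to\infty}\frac1{2T}\int_{-T}^T\pmb\A^q_t\,dt$ (both operators are the orthogonal projection onto the fixed subspace of the flow, which is $L^2(\B;\E^q)$ by \eqref{iTisSymmetric}, and similarly in degree $q+1$), so that (iii) follows from the $t\in\R$ identity by integrating over $[-T,T]$, letting $T\to\infty$, and invoking closedness of $\Dee_q$. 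By contrast, the Friedrichs density on $\N$ and the bookkeeping with restrictions of closed operators are routine.
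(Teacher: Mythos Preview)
Your proof is correct and takes a genuinely different route from the paper's. The paper does not go through graph-norm density at all; it exploits the ellipticity hypothesis on $P_q$ (the standing assumption of Section~\ref{s-FiniteDimCoho}) to expand any $u\in L^2(\B;\E^q)$ as $u=\sum_{\lambda,\tau}\Pi^q_{\lambda,\tau}u$ in the joint eigenspaces of $P_q$ and $-\im\Lie_\T$, and then shows by a pairing argument that the terms with $\tau\ne 0$ vanish. The partial sums lie in $C^\infty(\B;\E^q)$ and converge to $u$ in $L^2$; since $\Dee_{q+1}\Dee_q$ is a differential operator, hence continuous on distributions, this $L^2$ density alone already forces $\Dee_{q+1}\Dee_q u=0$ distributionally for any $u$ with $\Lie_\T u=0$, without ever needing $\Dee_q u_n\to\Dee_q u$. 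Your approach, by contrast, bypasses ellipticity entirely: Friedrichs mollification plus the averaging projection gives graph-norm density, and your use of von Neumann's mean ergodic theorem to identify $\Av^q$ with the time average $\lim_{T\to\infty}\frac{1}{2T}\int_{-T}^T\pmb\A^q_t\,dt$ is a clean way around the mismatch between the a priori different tori $\hat G^q$ and $\hat G^{q+1}$. The paper's argument is shorter in context because the spectral machinery is needed anyway for the Hodge theory that follows, whereas yours establishes a slightly stronger statement (the proposition holds without assuming \eqref{DeltaPlusL2}) and makes the intertwining $\Dee_q\circ\Av^q=\Av^{q+1}\circ\Dee_q$ explicit---a relation the paper never isolates.
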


\begin{proof}
To say that $u\in \Dom^q(\D)$ is to say that $u\in \Dom^q(\Dee)$ and $\Lie_\T u=0$. Hence $\D_q u=\Dee_q u\in L^2(\N;E^{q+1})$ and $\Lie_\T\Dee_q u=0$. To complete the proof that $\D_qu\in \Dom^{q+1}(\D)$ we need to show that $\Dee_{q+1}\Dee_q u\in L^2(\N;E^{q+1})$. This we show by proving that in fact $\Dee_{q+1}\Dee_q u  = 0$ when $\Lie_\T u=0$, which in turn is a consequence of the fact that 
\begin{equation}\label{ExpectedDensity}
\text{$C^\infty(\B;\E^q)$ is dense in $L^2(\N;\E^q)$}
\end{equation}
for any $q$, as we will show in a moment. Once this assertion is proved, we will also conclude that $\Dom^q(\D)$ is dense in $L^2(\B;\E^q)$, since already $C^\infty(\B;\E^q)\subset \Dom^q(\D)$.

The ellipticity of $P_q$ implies that its spectrum consists of isolated eigenvalues of finite multiplicity. The eigenspace, $\Eigen_\lambda^q$ corresponding to a given eigenvalue $\lambda$ consists of smooth sections of $E^q$. Since $\Lie_\T$ commutes with $P_q$ (a consequence of \eqref{iTisSymmetric} and \eqref{Commutators}) each eigenspace splits into a direct sum $\Eigen_\lambda=\bigoplus \Eigen_{\lambda,\tau}^q$ of eigenspaces of $-\im \Lie_\T\big|_{\Eigen_\lambda}:\Eigen_\lambda^q\to \Eigen_\lambda^q$. Let 
\begin{equation*}
\Pi_{\lambda,\tau}^q:L^2(\N;E^q)\to L^2(\N;E^q)
\end{equation*}
be the orthogonal projection onto $\Eigen_{\lambda,\tau}^q$.  Note that $\spec(P_q)\subset \lbra 0,\infty\rpar$ since $P_q$ is non-negative.

Let $u\in L^2(\N;E^q)$. Then of course $u=\sum_{\lambda,\tau}\Pi^q_{\lambda,\tau}u$ 
with convergence in $L^2$. We will show that if in addition $\Lie_\T u=0$, then $\Pi^q_{\lambda,\tau}u=0$ if $\tau\ne 0$, so the above becomes
\begin{equation*}
u=\sum_{\lambda}\Pi^q_{\lambda,0}u,
\end{equation*}
a series whose partial sums are elements of $C^\infty(\B;\E^q)$. Let then $\lambda$ be an eigenvalue of $P_q$ and $\tau_0\ne 0$ be one of $-\im \Lie_\T\big|_{\Eigen_\lambda}$. Let $v\in \Eigen_{\lambda,\tau_0}^q$. Then $(\Lie_\T u,v)=(u,-\Lie_\T v)$ in the pairing of a distributional and a smooth section of $E^q$. Since $\Lie_\T u=0$,
\begin{equation*}
0=(u,\im \tau_0 v)=-\im\tau_0 (u,v).
\end{equation*}
Thus $(u,v)=0$ for all $v\in \Eigen_{\lambda,\tau_0}^q$, so $\Pi_{\lambda,\tau_0}u=0$. Since $\tau_0\ne 0$ and $\lambda$ are arbitrary, the Fourier series of $u$ has no terms with $\tau\ne 0$. 
\end{proof}

The closure of 
\begin{equation*}
\Dee_q:C^\infty(\N;E^q)\subset L^2(\N;E^q) \to  L^2(\N;E^{q+1})
\end{equation*}
is the operator $\Dee_q$ with its maximal domain: the operator has only one closed extension. The same holds for the formal adjoint, 
\begin{equation*}
\Dee^\star:C^\infty(\N;E^{q+1})\subset L^2(\N;E^{q+1}) \to  L^2(\N;E^q);
\end{equation*}
the domain of its closure is its maximal domain. A consequence of this is that the latter is the Hilbert space adjoint of the former (since $\N$ has no boundary). Let 
\begin{equation*}
\Dom^{q+1}(\D^\star)=\set{v\in L^2(\B;\E^{q+1}):\Dee^\star v\in L^2(\N;E^q)}.
\end{equation*}
As in Proposition \ref{domD}, if $v\in \Dom^{q+1}(\D^\star)$ then $\D^\star v\in L^2(\B;\E^q)$, and $\D_q^\star$ with its given domain is closed and densely defined. 

\begin{proposition}
The operator
\begin{equation*}
\D^\star:\Dom^{q+1}(\D^\star)\subset L^2(\B;\E^{q+1})\to L^2(\B;\E^q)
\end{equation*}
is the adjoint of \eqref{DWithDomain}.
\end{proposition}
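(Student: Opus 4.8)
The plan is to establish the two inclusions between $\D^\star$ (with the domain $\Dom^{q+1}(\D^\star)$) and the Hilbert space adjoint of the operator \eqref{DWithDomain}, call the latter $(\D_q)^*$. First I would show that $\D^\star \subseteq (\D_q)^*$: if $v \in \Dom^{q+1}(\D^\star)$, then for every $u \in \Dom^q(\D)$ we have $u \in \Dom^q(\Dee)$ and $v \in \Dom^{q+1}(\Dee^\star)$ (using that the maximal domain of the formal adjoint equals the domain of the closure, as recorded just before the proposition), so the pairing identity $(\Dee_q u, v) = (u, \Dee^\star v)$ holds by the fact that $\Dee^\star$ is the genuine Hilbert space adjoint of $\Dee_q$ on the ambient spaces $L^2(\N;E^q)$. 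Since $\D_q u = \Dee_q u$ and $\D^\star v = \Dee^\star v$, this gives $(\D_q u, v) = (u, \D^\star v)$ for all $u \in \Dom^q(\D)$, which is exactly the statement that $v \in \Dom((\D_q)^*)$ and $(\D_q)^* v = \D^\star v$.

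The reverse inclusion $(\D_q)^* \subseteq \D^\star$ is where the real content lies, and the main obstacle is a \emph{domain} issue: a priori, $v \in \Dom((\D_q)^*)$ only controls the pairing $(\D_q u, v)$ against $u$ ranging over the \emph{smaller} space $\Dom^q(\D) \subseteq L^2(\B;\E^q)$, not against all of $\Dom^q(\Dee)$, so one cannot immediately conclude $v \in \Dom^{q+1}(\Dee^\star)$. The remedy is the averaging projection $\Av^q$ and its density consequence. Let $v \in \Dom((\D_q)^*)$, so $v \in L^2(\B;\E^{q+1})$ and there is $w \in L^2(\B;\E^q)$ with $(\Dee_q u, v) = (u, w)$ for all $u \in \Dom^q(\D)$. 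I would take an arbitrary $u \in C^\infty(\N;E^q)$ (or more generally $u \in \Dom^q(\Dee)$) and apply $\Av^q$. Since $\Av^q u \in C^\infty(\B;\E^q) \subseteq \Dom^q(\D)$ when $u$ is smooth, and since $\Av^q$ is formally selfadjoint (it is the orthogonal projection of the Lemma, cf. Lemma~\ref{AvIsProjector}) and commutes with $\Dee_q$ — because $\Av^q$ is built from the unitary morphisms $\A^q_{\hat g}$, which intertwine with $\Dee_q$ by the equivariance in \eqref{Commutators}, so $\Av^q \Dee_q = \Dee_q \Av^q$ — we get
\begin{equation*}
(\Dee_q u, v) = (\Dee_q u, \Av^{q+1} v) = (\Av^{q+1}\Dee_q u, v) = (\Dee_q \Av^q u, v) = (\Av^q u, w) = (u, \Av^q w) = (u, w),
\end{equation*}
using $\Av^{q+1} v = v$ and $\Av^q w = w$ since $v,w$ already lie in the kernels of $\Lie_\T$. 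Hence $(\Dee_q u, v) = (u, w)$ for \emph{all} smooth $u$, and by density of $C^\infty(\N;E^q)$ in $\Dom^q(\Dee)$ (the core property stated before the proposition) for all $u \in \Dom^q(\Dee)$. This says precisely that $v \in \Dom^{q+1}(\Dee^\star)$ with $\Dee^\star v = w$; combined with $v \in L^2(\B;\E^{q+1})$ this gives $v \in \Dom^{q+1}(\D^\star)$ and $\D^\star v = w = (\D_q)^* v$.

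Putting the two inclusions together yields $\D^\star = (\D_q)^*$ as operators with their stated domains, which is the assertion. The only points requiring a little care beyond routine bookkeeping are: (i) that $\Av^q$ genuinely commutes with $\Dee_q$ — this follows from the definition \eqref{defOfAverage} of $\Av^q$ as an average of the $\A^q_{\hat g}$ over Haar measure together with the fact, implicit in \eqref{Commutators}, that each $\A^q_{\hat g}$ intertwines $\Dee_q$ on $C^\infty(\N;E^q)$, extended to the $L^2$ level by the closedness of $\Dee_q$; and (ii) that $\Av^q$ maps $\Dom^q(\Dee)$ into $\Dom^q(\D)$, which again follows from this commutation since $\Dee_q \Av^q u = \Av^q \Dee_q u \in L^2$. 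Both are the natural $L^2$-analogues of the smooth-section statements already established in the Lemma preceding this proposition.
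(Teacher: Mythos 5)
Your proof is correct in substance and, once unwound, rests on the same key fact as the paper's, though the packaging of the hard inclusion differs. The easy inclusion $\Dom^{q+1}(\D^\star)\subset\frakD^{q+1}$ is identical to the paper's. For the converse, the paper observes that the distributional section $\Dee_q^\star v$ is annihilated by $\Lie_\T$ (since $\Lie_\T v=0$ and $\Dee_q^\star$ commutes with $\Lie_\T$), deduces $(\Dee_q w,v)=0$ for every smooth $w$ orthogonal to $L^2(\B;\E^q)$, and then bounds $|(\Dee_q(u+w),v)|\le C\|u\|\le C\|u+w\|$ to place $v$ in the maximal domain of $\Dee_q^\star$; you reach the same conclusion by inserting the averaging projections and invoking the intertwining $\Av^{q+1}\circ\Dee_q=\Dee_q\circ\Av^q$ on smooth sections. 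The one step where your justification is too quick is exactly that intertwining: \eqref{Commutators} yields $\pmb\A^{q+1}_t\circ\Dee_q=\Dee_q\circ\pmb\A^q_t$ only for the one-parameter groups, and $\hat G^q$, $\hat G^{q+1}$ are closures taken in \emph{different} bundles, so the phrase ``each $\A^q_{\hat g}$ intertwines $\Dee_q$'' does not literally parse — there is no canonical element of $\hat G^{q+1}$ attached to a given $\hat g\in\hat G^q$ (one would have to pass to the joint closure of $\set{(\A^q_t,\A^{q+1}_t)}$ in $\hat G^q\times\hat G^{q+1}$ and check that it pushes Haar measure to Haar measure on each factor). The cleaner route to the intertwining is the paper's own computation: $\Dee_q$ maps $\ker\Lie_\T$ into $\ker\Lie_\T$, and maps smooth sections orthogonal to $\ker\Lie_\T$ into sections orthogonal to $\ker\Lie_\T$ — pair against $v'\in C^\infty(\B;\E^{q+1})$, use $\Lie_\T\Dee_q^\star v'=0$, and invoke the density of $C^\infty(\B;\E^{q+1})$ in $L^2(\B;\E^{q+1})$ established in the proof of Proposition \ref{domD}. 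With that substitution your argument closes and is the paper's proof in different clothing; what your version buys is a slightly more explicit identification $\Dee_q^\star v=(\D_q)^*v$ in place of the abstract boundedness argument.
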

\begin{proof}
Let $\frakD^{q+1}\subset L^2(\B;\E^{q+1})$ denote the domain of the adjoint of \eqref{DWithDomain}. If $v\in L^2(\B;\E^{q+1})$ belongs to $\Dom^{q+1}(\Dee^\star)$, then 
\begin{equation*}
(\D u,v)=(\Dee_q u,v)=(u,\Dee_q^\star v)
\end{equation*}
for any $u\in\Dom^q(\D)$ so $\Dom^q(\D)\ni u \mapsto (\D_q u,v)$ is $L^2$-continuous, hence $v\in \frakD^{q+1}$. Thus $\Dom^{q+1}(\D^\star)\subset \frakD^{q+1}$. Conversely, suppose $v\in \frakD^{q+1}$, so $(\Dee_q u,v)=(u,\Dee_q^*v)$ 
\begin{equation}\label{InAdjointDomain}
\Dom^q(\D)\ni u\mapsto (\D_q u,v)\in \C
\end{equation}
is $L^2$-continuous. In particular, $|(\D_q u,v)|\leq C\|u\|$ when $u\in C^\infty(\B;\E^q)\subset \Dom^q(\Dee)$. If $w\in C^\infty(\N;E^q)$ is orthogonal to $L^2(\B;\E^q)$, then $(\Dee_qw,v)=0$: indeed, $(\Dee_qw,v)=(w,\Dee_q^\star v)$ in the distributional sense, but $\Lie_\T\Dee_q^\star v=\Dee_q^\star \Lie_\T v=0$, so $(w,\Dee_q^\star v)=0$. Thus if $u,w$ are smooth with $\Lie_\T u=0$ and $w\perp u$, then
\begin{equation*}
|(\Dee_q(u+w),v)|=|(\Dee_qu,v)|=|(\D_qu,v)|\leq C\|u\|\leq C\|u+w\|.
\end{equation*}
Thus $v\in \Dom^{q+1}(\Dee^\star)$.
\end{proof}
The domain of the Hodge Laplacian in degree $q$ of the complex \eqref{TheGeneralComplex} is therefore
\begin{equation*}
\Dom(\Laplacian_q)=\set{u\in \Dom^q(\Dee)\cap \Dom^q(\Dee^\star): \Dee u\in \Dom^{q+1}(\Dee^\star),\ \Dee^\star u\in \Dom^{q-1}(\Dee),\ \Lie_\T u=0}.
\end{equation*}
This is the intersection of the domain of $P_q$, which is the Sobolev space $\mathrm H^2(\N;E^q)$, with $\ker\Lie_\T^2$, and $\Laplacian_q$ is the restriction of $P_q$ to $\Dom(\Laplacian_q)$.

The operator
\begin{equation*}
P_q:\mathrm H^2(\N;E^q)\subset L^2(\N;E^q)\to L^2(\N;E^q)
\end{equation*}
is elliptic  selfadjoint. Let
\begin{equation*}
\Pi^q_{0,0}:L^2(\N;E^q)\to L^2(\N;E^q)
\end{equation*}
be the orthogonal projection onto $\ker P_q=\Ha^q$, a subspace of $L^2(\B;\E^q)$, and let $G_q$ be the selfadjoint parametrix of $P_q$ such that 
\begin{equation*}
P_qG_q=\Id-\Pi^q_{0,0},\quad G_qP_q=\Id-\Pi^q_{0,0}.
\end{equation*}
Since $P_q$ commutes with $\Lie_\T$, so does $G_q$. 
Finally, let 
\begin{equation*}
\G_q:L^2(\B;\E^q)\to \Dom(\Laplacian_q)
\end{equation*}
be the restriction of $G_q$ to $L^2(\B;\E^q)=L^2(\N;\Wedge^q\Hor^*)\cap \ker \Lie_\T$. Then
\begin{equation*}
\Laplacian_q \G_q=\Id-\Pi^q_{0,0},\quad \G_q\Laplacian_q=\Id-\Pi^q_{0,0}.
\end{equation*}
Thus we have the Hodge decomposition
\begin{equation*}
L^2(\B;\E^q) = \Ha^q+\rg \D^\star+\rg \D
\end{equation*}
and the Hodge isomorphism 
\begin{equation*}
\Ha^q\to H^q(\B,\E)
\end{equation*}
with the cohomology spaces of the complex \eqref{TheGeneralComplex}.

Thus, the cohomology spaces $H^q(\B,\E)$ are finite dimensional. This concludes the proof of Proposition \ref{FiniteCohomology}.

Let $\h:H^q(\B;\E) \to \Ha^q$ denote the canonical Hodge isomorphism, let 
\begin{equation*}
\Phi^q_\h=\Pi^q_{0,0}\circ\Phi^q\big|_{\Ha^q}:\Ha^q\to\Ha^q.
\end{equation*}
If $u\in \Ha^q$, then the class of $\Phi^q(u)$ is by definition $\pmb \Phi^q$ applied to the class of $u$. In other words, $\Pi^q_{0,0}\circ \Phi^q\big(\h (\pmb u)\big)=\h\big(\pmb \Phi^q( \pmb u)\big)$ if $\pmb u\in H^q(\B;\E)$: the diagram
\begin{equation*}
\begin{tikzcd}[column sep=normal]
\Ha^q \arrow[r,"\Phi^q_\h"] & \Ha^q \\
H^q(\B ;\E) \arrow[u,"\h"'] \arrow[r,"\pmb \Phi^q"] & H^q(\B;\E) \arrow[u,"\h"']
\end{tikzcd}
\end{equation*}
commutes and therefore
\begin{equation*}
\tr \pmb \Phi^q=\tr \Phi^q_\h.
\end{equation*}

For sake of reference we state:
\begin{proposition}
The Lefschetz number of the complex \eqref{TheGeneralComplex} is given by 
\begin{equation}\label{HarmonicLefschetz}
L_{f,\Phi}=\sum_{q=0}^m (-1)^q\tr \Phi^q_\h.
\end{equation}
\end{proposition}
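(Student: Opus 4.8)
The plan is to read off \eqref{HarmonicLefschetz} directly from the definition \eqref{LefschetzNumber} together with the trace identity $\tr\pmb\Phi^q=\tr\Phi^q_\h$ that was just recorded above. Indeed, $L_{f,\phi}=\sum_{q=0}^m(-1)^q\tr\pmb\Phi^q$ by \eqref{LefschetzNumber}, so it suffices to know that each $\tr\pmb\Phi^q$ equals $\tr\Phi^q_\h$ and then sum over $q$ with the alternating signs. I would present this as a two-line computation, devoting what little argument there is to the justification of the commuting square relating $\pmb\Phi^q$ and $\Phi^q_\h$ through the Hodge isomorphism $\h\colon H^q(\B;\E)\to\Ha^q$.

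For that square, the point is that $\Phi^q$ is a cochain map for the complex \eqref{TheGeneralComplex} (diagram \eqref{CD}), hence descends to $\pmb\Phi^q$ on cohomology. Transporting this action through $\h$ means representing a class $\pmb u$ by its harmonic representative $u\in\Ha^q$, applying $\Phi^q$, and then extracting the harmonic component with $\Pi^q_{0,0}$. Since $u$ is harmonic it is in particular closed, $\D_q u=0$, and because $\Phi^{q+1}\circ\D_q=\D_q\circ\Phi^q$ the section $\Phi^q(u)$ is again closed; by the Hodge decomposition $L^2(\B;\E^q)=\Ha^q+\rg\D^\star+\rg\D$ the closed sections are exactly $\Ha^q+\rg\D$, so $\Phi^q(u)$ and its harmonic part $\Pi^q_{0,0}\Phi^q(u)$ differ by a coboundary. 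Hence $\Pi^q_{0,0}\circ\Phi^q$ represents the same operation on cohomology as $\Phi^q$, giving $\Pi^q_{0,0}\circ\Phi^q\big(\h(\pmb u)\big)=\h\big(\pmb\Phi^q(\pmb u)\big)$, i.e. $\Phi^q_\h=\h\circ\pmb\Phi^q\circ\h^{-1}$.

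Finally, the cohomology spaces $H^q(\B;\E)$ are finite-dimensional by Proposition \ref{Ellipticity} (equivalently, $\Ha^q$ is finite-dimensional via the Hodge isomorphism proved above), so the traces are defined, and trace is a conjugation invariant: $\tr\Phi^q_\h=\tr(\h\circ\pmb\Phi^q\circ\h^{-1})=\tr\pmb\Phi^q$. Substituting into \eqref{LefschetzNumber} yields \eqref{HarmonicLefschetz}. I do not expect any genuine obstacle here: the proposition merely repackages \eqref{LefschetzNumber} in terms of harmonic representatives, and its usefulness is purely for later bookkeeping, when the right-hand side of \eqref{AbstractRHSFormula} is analyzed via the Schwartz kernels of $\Phi^q\circ\Av^q$ and the harmonic projection $\Pi^q_{0,0}$ in Sections \ref{Limits} and \ref{sLimit}.
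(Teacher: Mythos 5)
Your proposal is correct and follows the same route as the paper: the paper likewise defines $\Phi^q_\h=\Pi^q_{0,0}\circ\Phi^q\big|_{\Ha^q}$, observes that the square relating $\pmb\Phi^q$ and $\Phi^q_\h$ through the Hodge isomorphism $\h$ commutes, concludes $\tr\pmb\Phi^q=\tr\Phi^q_\h$, and substitutes into \eqref{LefschetzNumber}. The only difference is that you spell out the routine verification of the commuting square (closedness of $\Phi^q(u)$ via the cochain property and the Hodge decomposition), which the paper leaves implicit with the phrase ``by definition.''
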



\section{Connections, actions, averages}\label{ConnectionsGroupsAverage}

We now prove Proposition \ref{OnConnections}. Suppose $\pi:F\to \N$ is a complex vector bundle of rank $r$ with Hermitian metric $h$ and connection $\nabla$.

Since the connection is Hermitian, the one-parameter group of diffeomorphisms generated by $\hat\T$, the horizontal vector field on $F$ projecting on $\T$, consists of unitary morphisms $\A_t:F\to F$ covering $\a_t$. 

Let $\hor$ be the horizontal subbundle of $TF$ according to the connection, let $\vert$ be the vertical subbundle, the kernel of $d\pi:TF\to T\N$. Define $\hat \gg$ to be the metric for which $TF=\hor\oplus \vert$ is an orthogonal decomposition, $d\pi:\hor_\eta \subset T_\eta F \to T_{\pi(\eta)}\N$ is an isometry for each $\eta\in F$, and $\hat\gg$ is the canonical metric determined by the real part of $h$ on $\vert$. Denote by $\gg'$ the metric on $\hor$.

\begin{proposition}\label{UnitaryIsometryGroup}
The diffeomorphisms $\A_t:F\to F$ preserve the decomposition $\hor\oplus \vert$ iff the curvature $\Omega:F\to F\otimes \Wedge^2\N$ of $\nabla$ has image in $F\otimes \Wedge^2\Hor^*$. If this is the case, then $\Lie_{\hat\T}\hat \gg=0$.
\end{proposition}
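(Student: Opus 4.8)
The plan is to analyze the flow of $\hat\T$ on $TF$ by splitting the question into its action on the horizontal and vertical subbundles separately, and to recognize that the obstruction to preserving the splitting is precisely the curvature restricted to directions involving $\T$.

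First I would recall that the flow $\A_t$ of $\hat\T$ always preserves the vertical bundle $\vert$, since $\A_t$ covers $\a_t$ and hence $d\A_t$ maps $\ker d\pi$ to $\ker d\pi$; moreover, since parallel transport is unitary, $d\A_t$ acts on $\vert_\eta \cong F_{\pi(\eta)}$ by the unitary parallel transport morphism, which is a $\gg'$-vertical isometry onto $\vert_{\A_t(\eta)}$ by construction of $\hat\gg$ on $\vert$. So the vertical part is automatically handled. The real content is whether $d\A_t$ maps $\hor$ into $\hor$. Equivalently, writing $[\hat\T,\cdot]$ for the infinitesimal generator, the splitting is preserved for all $t$ iff $\Lie_{\hat\T}$ preserves sections of $\hor$, i.e. iff for every horizontal vector field $X$ on $\N$ with horizontal lift $X^\hor$ we have $[\hat\T, X^\hor]$ again horizontal (it is automatically projectable onto $[\T,X]$). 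The standard curvature identity for a connection says that for lifts of vector fields $X,Y$ on $\N$, the vertical component of $[X^\hor,Y^\hor]$ is (the fundamental vertical vector field associated with) $-\Omega(X,Y)$ acting on the fiber. Applying this with $X = \T$ gives that the vertical part of $[\hat\T, Y^\hor]$ is governed by $\Omega(\T, Y)$, the contraction $\inner_\T\Omega$. Hence $d\A_t$ preserves $\hor$ for all $t$ and all lifts iff $\inner_\T\Omega = 0$, which is exactly the condition that $\Omega$ takes values in $F\otimes\Wedge^2\Hor^*$ (recall $\Wedge^2\Hor^* = \ker \inner_\T$ on $\Wedge^2\N$). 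This proves the equivalence in the first sentence.

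For the second assertion, assume $\inner_\T\Omega = 0$, so $\A_t$ preserves the orthogonal splitting $\hor\oplus\vert$. Then to show $\Lie_{\hat\T}\hat\gg = 0$ it suffices to check that $d\A_t$ is an isometry of $\hat\gg$, and by the splitting it suffices to check this separately on $\hor$ and on $\vert$. On $\vert$: as noted, $d\A_t$ acts fiberwise by unitary parallel transport, which preserves the canonical metric induced by $\mathrm{Re}\,h$, so $d\A_t$ is a $\hat\gg$-isometry on $\vert$. On $\hor$: by definition $d\pi : \hor_\eta \to T_{\pi(\eta)}\N$ is a $\gg'$-to-$\gg$ isometry, and $d\pi\circ d\A_t = d\a_t \circ d\pi$; since $\a_t$ is a $\gg$-isometry (the metric $\gg$ on $\N$ is $\T$-invariant by hypothesis) and $d\A_t$ maps $\hor$ to $\hor$, it follows that $d\A_t$ restricted to $\hor$ is a $\gg'$-isometry. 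Finally there is no cross term to worry about because $\A_t$ preserves the orthogonality of $\hor$ and $\vert$. Hence $\A_t^*\hat\gg = \hat\gg$ for all $t$, and differentiating at $t=0$ gives $\Lie_{\hat\T}\hat\gg = 0$.

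The main obstacle, and the step needing the most care, is the curvature identity: one must verify that the vertical component of the bracket of horizontal lifts is exactly $-\Omega$ applied appropriately, with signs and with the correct interpretation of ``$\Omega(\T,Y)$ acting on the fiber'' as a genuine vertical vector field on the total space $F$ — and, crucially, that the horizontal lift $\T^\hor$ of $\T$ is precisely the vector field $\hat\T$ generating $\A_t$, so that $[\hat\T, Y^\hor]$ is indeed $[\T^\hor, Y^\hor]$. One also has to be slightly careful that ``$d\A_t$ preserves $\hor$ for all $t$'' is equivalent to the infinitesimal statement ``$[\hat\T, \cdot]$ preserves horizontal sections''; this is the standard fact that a subbundle is invariant under a flow iff it is invariant under the Lie derivative along the generating field, valid here because $\hat\T$ is complete (its flow $\A_t$ is globally defined since $\T$ is complete on the compact $\N$ and parallel transport along complete curves is complete). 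Everything else is routine linear algebra on the fibers and the functoriality of pullback metrics.
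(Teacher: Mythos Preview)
Your argument is correct and complete. The key identification $\hat\T = \T^\hor$ together with the standard curvature identity (vertical component of $[X^\hor,Y^\hor]$ equals the fundamental vertical field of $-\Omega(X,Y)$) gives exactly $\inner_\T\Omega$ as the obstruction, and your treatment of the metric statement via the splitting is clean.

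The paper takes a different, explicitly computational route. It chooses flow-box coordinates $(x^1,\dots,x^n)$ with $\T=\partial/\partial x^n$, builds an orthonormal frame $\eta_\mu$ parallel along $\T$ so that $\inner_\T\omega^\nu_\mu=0$ automatically, writes down the horizontal frame $X_j$ in coordinates, and then computes $d\A_t(X_j)$ directly. The invariance of $\hor$ becomes the statement that $\a_t^*\omega^\nu_\mu$ is constant in $t$, i.e.\ $\Lie_\T\omega^\nu_\mu=0$; combined with $\inner_\T\omega^\nu_\mu=0$ this is equivalent to $\inner_\T d\omega^\nu_\mu=0$, hence $\inner_\T\Omega=0$. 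The metric invariance is then read off from the explicit frames. Your invariant argument is shorter and makes the geometric content (curvature as the infinitesimal failure of the flow to preserve $\hor$) transparent; the paper's coordinate approach, while longer, has the virtue that the same adapted frame and the identity $\inner_\T\omega^\nu_\mu=0$ are reused elsewhere in the section, so some of the setup is amortized.
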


We first address Proposition \ref{OnConnections}. The proof of Proposition \ref{UnitaryIsometryGroup} is given afterwards. 

Assuming that the curvature of $\nabla$ has values in $F\otimes\Wedge^2\Hor^*$, the proposition ensures the existence of the Riemannian metric $\hat \gg$ on $F$ which is preserved by $\hat \T$. Let $SF\to \N$ be the sphere bundle of $F$. Since the connection is Hermitian, $\hat\T$ is tangential to $SF$ and so the restriction of $\A_t$ to $SF$ is an isometry of $SF$ with the restricted metric. The closure of $\set{\A_t|_{SF}:t\in \R}$ in the compact-open topology of the group of homeomorphisms of $SF$ is thus an abelian subgroup of the group of isometries of $SF$, which is compact becasue $SF$ is compact. It is an elementary fact that that each of these isometries is the restriction to $SF$ of a unique unitary morphism $F\to F$ and so the collection of all morphisms obtained this way is a compact abelian group $\hat G$ acting on $F$ by unitary morphisms. The existence of a surjective map $\wp:\hat G\to G$ such that
\begin{equation}\label{wp}
\begin{tikzcd}
F \arrow[d, "\pi"'] \arrow[r,"\A_{\hat g}"] & F \arrow[d, "\pi"]\\
\N \arrow[r,"\a_{\wp(\hat g)}"] &\N
\end{tikzcd}
\end{equation}
commutes for every $\hat g\in \hat G$ follows from density and the condition that $\wp(\A_t)=\a_t$. 

\medskip

\begin{proof}[Proof of Proposition \ref{UnitaryIsometryGroup}]
Since $t\mapsto \hat \A_t$ is a one-parameter group, we only need to prove the conclusion locally for small $t$. Let $p_0\in \N$, let $\X$ be a hypersurface containing $p_0$ with $\T$ transversal to $\X$. Assume that $\X$ is diffeomorphic to a ball in $\R^{n-1}$, it and $\delta>0$ so small that 
\begin{equation*}
\X\times (-\delta,\delta)\ni (p,t) \mapsto \a_t p\in \N
\end{equation*}
is a diffeomorphism onto its image. Then the image, denoted $\calU$, is the domain of a local chart $(x^1,\dots,x^n)$ with $\T x^j=0$ for $j<n$ and $\T x^n=1$ with $x_n=0$ on $\X$. Thus $\T=\partial/\partial x^n$ and 
\begin{equation*}
\a_t(x^1,\dots,x^{n-1},0)=(x^1,\dots,x^{n-1},t)
\end{equation*}
Next, choose an orthonormal frame for $F$ along $\X$ and extend it so that $\eta_\mu(\a_t p)=\hat\A_t\big(\eta_\mu(p)\big)$ when $p\in \X$. Thus $\eta_\mu(\a_t p)$ is parallel transport of $\eta(p)$ from $p$ to $\a_t(p)$ along the curve $s\mapsto \a_s(p)$.  

In particular, the $\eta_\mu$ form an orthonormal frame of $F$ over $\calU$ and $\nabla_{\partial/\partial x^n}\eta_\mu=0$. Let the $\omega^\nu_\mu$ be the respective connection forms,
\begin{equation*}
\nabla \eta_\mu=\sum_\nu \eta_\nu\otimes \omega^\nu_\mu.
\end{equation*}
Since the connection is Hermitian and the frame is orthonormal, $\overline \omega^\mu_\nu=-\omega^\nu_\mu$. Denoting by $(x,\zeta)$ the induced coordinates on $\pi^{-1}(\calU)$, the real vector fields
\begin{equation}\label{TheHorFrame}
X_j=\frac{\partial }{\partial x^j}-\sum_{\mu,\nu}\big\langle \omega^\nu_\mu,\frac{\partial }{\partial x^j}\big\rangle\big(\zeta^\mu\frac{\partial }{\partial \zeta^\nu}-\overline \zeta^\nu\frac{\partial }{\partial \overline \zeta^\mu}\big),\quad j=1,\dotsc,n.
\end{equation}
form a frame of $\hor$ over $\pi^{-1}(\calU)$; the metric is such that 
\begin{equation*}
\hat \gg(X_j,X_k)=g(\partial_{x^j},\partial_{x^k}).
\end{equation*}  
The vector fields
\begin{equation}\label{TheVertFrame}
\frac{\partial }{\partial \Re \zeta^\mu},\ \frac{\partial }{\partial \Im \zeta^\nu}, \quad\mu,\nu=1,\dots,r.
\end{equation}
span the part of $\vert$ over $\pi^{-1}(\calU)$. The metric is such that the norm of a tangent vector
\begin{equation*}
v=\sum_\mu \big( a^\mu\frac{\partial }{\partial \Re \zeta^\mu}+ b^\mu \frac{\partial }{\partial \Im \zeta^\mu}\big)
\end{equation*}
with real coefficients $a^\mu$, $b^\mu$ is that of the element
\begin{equation*}
\sum_\mu (a^\mu\eta_\mu+b^\mu \im \eta_\mu)=\sum(a^\mu+\im b^\mu)\eta_\mu
\end{equation*}
(obtained via the canonical identification of the tangent space of a vector space at a point with the vector space itself). Thus the metric $\hat \gg$ makes \eqref{TheVertFrame} into an orthonormal frame of the part of $\vert$ over $\pi^{-1}(\calU)$. 

Since $0=\nabla_{\partial/\partial x^n}\eta_\mu=\sum_\nu\eta_\nu\otimes \langle \omega^\nu_\mu,\partial/\partial x^n\rangle$, 
\begin{equation}\label{CurvatureFormsAreInHstar}
\inner_\T\omega^\nu_\mu=0\quad \text{for all }\nu,\mu.
\end{equation}
In particular,
\begin{equation*}
X_n=\frac{\partial }{\partial x^n}.
\end{equation*}
Thus, in coordinates, the integral curve of $X_n$ starting at $\eta(0)=\sum_\mu \zeta^\mu\eta_\mu(p)$ with $p\in \X$ of coordinates $(x^1,\dots,x^{n-1},0)$ is just
\begin{equation*}
t \mapsto \A_t(\eta(0))=\sum_\mu \zeta^\mu \eta_\mu(\a_t p)=(x^1,\dots,x^{n-1},t;\zeta_1\dots,\zeta_r)=\eta(t).
\end{equation*}
Therefore
\begin{equation*}
d\A_t\big(\frac{\partial }{\partial \Re \zeta^\mu}\big) = \frac{\partial }{\partial \Re \zeta^\mu},\quad d\A_t\big(\frac{\partial }{\partial \Im \zeta^\mu}\big) = \frac{\partial }{\partial \Im \zeta^\mu},
\end{equation*}
so $d\A_t$ always preserves $\vert$. 

On the other hand, 
\begin{equation*}
d\A_t(X_j(\eta(0)))=X_j(\eta(t))+V_j(t)
\end{equation*}
with
\begin{equation*}
V_j(t)=\sum_{\mu,\nu}\big[\big\langle \omega^\nu_\mu(\a_tp),d\a_t\big(\frac{\partial }{\partial x^j}\Big|_p\big)\big\rangle-\big\langle \omega^\nu_\mu(p),\frac{\partial }{\partial x^j}\Big|_p\big\rangle\big]\big(\zeta^\nu\frac{\partial }{\partial \zeta^\mu}-\overline \zeta^\mu\frac{\partial }{\partial \overline \zeta^\nu}\big),
\end{equation*}
an element of $\vert_{\eta(t)}$. Thus $d\A_t$ preserves $\hor$ iff 
\begin{equation}\label{omegaConstant}
\big\langle \omega^\nu_\mu(\a_tp),d\a_t\big(\frac{\partial }{\partial x^j}\big)\big\rangle=\big\langle \omega^\nu_\mu(p),\frac{\partial }{\partial x^j}\big\rangle\quad\text{for all $j$ and all small $t$,}
\end{equation}
in other words, $\a_t^*(\omega^\nu_\mu(\a_tp))$ is constant. 

Hence, if $\A_t$ preserves the decomposition $\hor\oplus\vert$, then $\Lie_\T\omega^\nu_\mu=0$. Together with \eqref{CurvatureFormsAreInHstar} this yields
\begin{equation*}
\inner_\T d\omega^\nu_\mu=0\quad \text{for all }\nu,\mu
\end{equation*}
since $\Lie_\T\omega^\nu_\mu = \inner_\T d\omega^\nu_\mu + d(\inner_\T \omega^\nu_\mu)$. Applying this and \eqref{CurvatureFormsAreInHstar} to the curvature matrix 
\begin{equation*}
[\Omega^\nu_\mu]=[d\omega^\nu_\mu]+[\sum_\lambda \omega^\nu_\lambda\wedge \omega^\lambda_\mu]
\end{equation*}
we get $\inner_\T\Omega=0$ if the decomposition $\hor\oplus\vert$ is preserved.

On the other hand, if $\inner_\T\Omega=0$, then $\inner_\T d\omega^\nu_\mu=0$ since $\inner_\T\omega^\nu_\mu=0$ anyway. Then \eqref{omegaConstant} holds, so $\A_t$ preserves the decomposition $\hor\oplus\vert$. 

Finally, if $\inner_\T\Omega=0$, then $\A_t$ preserves the horizontal frame \eqref{TheHorFrame} and the vertical frame \eqref{TheVertFrame}. In particular, from the definition of the metric on $\hor$ we do have
\begin{equation*}
\gg'(d\A_t X_j,d\A_t X_k)=\gg(d\a_t \frac{\partial }{\partial x^j},d\a_t \frac{\partial }{\partial x^k})=\gg(\frac{\partial }{\partial x^j},\frac{\partial }{\partial x^k})=\gg'(X_j,X_k),
\end{equation*}
while from the definition of the metric on $\vert$ we also get $\A_t^*\gg'=\gg'$ along this summand. This completes the proof of Proposition \ref{OnConnections}.
\end{proof}

For a discussion on the existence of connections with $\Wedge^2\Hor^*$-valued curvature, see Appendix~\ref{CurvatureAndGroups}.

\smallskip
Continuing with a connection on $F$ whose curvature is valued in $F\otimes \Wedge^2\Hor^*$, with $\Lie_\T$ defined as $\nabla_\T$, and $\A_t$ as the one-parameter group of unitary morphisms defined by parallel transport along the integral curves of $\T$.

The operator $\Av:C^\infty(\N;F)\to C^\infty(\N;F)$ defined as that in \eqref{defOfAverage} has a continuous extension to $L^2(\N;F)$. Let $C^\infty(\B;\F)=\set{u\in C^\infty(\N;F):\Lie_\T u=0}$, let $L^2(\B;\F)$ denote the kernel of $\Lie_\T$ in $L^2(\N;F)$. 

\begin{lemma}\label{AvIsProjector}
The operator $\Av:L^2(\N;F)\to L^2(\N;F)$ is the orthogonal projection on $L^2(\B;\F)\subset L^2(\N;F)$.
\end{lemma}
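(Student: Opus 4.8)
The plan is to prove three things: that $\Av$ maps into $L^2(\B;\F)$, that it is the identity on $L^2(\B;\F)$, and that it is selfadjoint; together these say $\Av$ is the orthogonal projection onto $L^2(\B;\F)$. The first two points are essentially the content of the preceding lemma (the one proved "clearly"): the identity $\A^q_{\hat g}(\Av(u)(p))=\Av(u)(\a_{\wp(\hat g)}(p))$ gives $\Lie_\T\Av u=0$, so $\rg\Av\subset L^2(\B;\F)$ once we know $\Av$ extends continuously to $L^2$; and if $\Lie_\T u=0$ then parallel transport along orbit segments is trivial in the sense that $\A_{\hat g}(u(\a_{\wp(\hat g^{-1})}(p)))=u(p)$ for every $\hat g$, whence the integrand in \eqref{defOfAverage} is constant in $\hat g$ and $\Av u=u$. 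So $\Av$ is a (bounded) idempotent with range $L^2(\B;\F)$.

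The remaining and genuinely analytic point is twofold: first that $\Av$ is bounded on $L^2(\N;F)$ (so that the $C^\infty$ identities pass to the closure), and second that it is selfadjoint. For boundedness I would argue directly from \eqref{defOfAverage}: since each $\A^q_{\hat g}$ is a unitary morphism $F_{\a_{\wp(\hat g^{-1})}(p)}\to F_p$, one has the pointwise bound $|(\Av u)(p)|\le \int_{\hat G}|u(\a_{\wp(\hat g^{-1})}(p))|\,d\hat\mu(\hat g)$; applying Cauchy--Schwarz in $\hat g$ and integrating against the $\a_t$-invariant density $\m$, together with Fubini and the fact that for each fixed $\hat g$ the map $p\mapsto \a_{\wp(\hat g^{-1})}(p)$ is a $\m$-preserving diffeomorphism (being an isometry of $\gg$ up to the chosen normalization of $\m$), gives $\|\Av u\|_{L^2}\le \|u\|_{L^2}$. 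Thus $\Av$ extends to a bounded operator on $L^2(\N;F)$ and the two identities above hold on all of $L^2$ by density of $C^\infty(\N;F)$.

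For selfadjointness I would compute $(\Av u,v)_{L^2}$ for $u,v\in C^\infty(\N;F)$ directly: write it as $\int_\N \big(\int_{\hat G} h(\A_{\hat g}(u(\a_{\wp(\hat g^{-1})}p))),v(p))\,d\hat\mu(\hat g)\big)\,d\m(p)$, use that $\A_{\hat g}$ is unitary to move it to the other factor, i.e. $h(\A_{\hat g}(\xi),\eta)=h(\xi,\A_{\hat g}^{-1}(\eta))=h(\xi,\A_{\hat g^{-1}}(\eta))$, then apply Fubini and, for each fixed $\hat g$, change variables $p\mapsto \a_{\wp(\hat g)}(p)$ (which preserves $\m$). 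This turns the expression into $\int_{\hat G}\int_\N h(u(p),\A_{\hat g^{-1}}(v(\a_{\wp(\hat g)}p)))\,d\m(p)\,d\hat\mu(\hat g)$; finally substitute $\hat g\mapsto \hat g^{-1}$ using invariance of Haar measure to recognize the result as $(u,\Av v)_{L^2}$. Hence $\Av^\star=\Av$, and a bounded selfadjoint idempotent is exactly an orthogonal projection; its range is $L^2(\B;\F)$ by the first paragraph.

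The main obstacle I anticipate is purely bookkeeping rather than conceptual: keeping the covering map $\wp\colon\hat G\to G$, the direction of parallel transport, and the inverse in $\a_{\wp(\hat g^{-1})}$ consistent through the unitarity manipulation and the two changes of variable, and making sure the density $\m$ (not merely the Riemannian volume of $\gg$) is the one being preserved by the relevant diffeomorphisms — which it is, by construction $\m$ is $\a_t$-invariant and hence $\a_g$-invariant for all $g\in G$ by density of $\{\a_t\}$ in $G$ and continuity. Once the invariances are pinned down, boundedness, the idempotent property, and selfadjointness all fall out of Fubini and changes of variable, with no delicate estimates required.
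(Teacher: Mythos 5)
Your proof is correct and is exactly what the paper intends by its one-line proof (``observe that $\Av$ is a selfadjoint projection''): boundedness via Cauchy--Schwarz and the $\a_g$-invariance of $\m$ (which you rightly deduce from $\a_t$-invariance by density of $\set{\a_t}$ in $G$), idempotence with range $L^2(\B;\F)$, and symmetry via unitarity of $\A_{\hat g}$, the change of variables $p\mapsto\a_{\wp(\hat g)}(p)$, and invariance of Haar measure under inversion. All of these computations check out.

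The one step that needs more care is the claim that ``the two identities above hold on all of $L^2$ by density of $C^\infty(\N;F)$.'' For the first identity ($\rg\Av\subset L^2(\B;\F)$) this is fine, since $L^2(\B;\F)$ is closed and $\Av$ maps the dense subspace $C^\infty(\N;F)$ into it continuously. But for the second ($\Av u=u$ for every $u\in L^2(\B;\F)$) you need density of $C^\infty(\B;\F)$ in $L^2(\B;\F)$, not density of $C^\infty(\N;F)$ in $L^2(\N;F)$: a priori an $L^2$ section annihilated by $\Lie_\T$ in the maximal-domain sense need not be an $L^2$-limit of \emph{smooth} such sections. The paper supplies this density separately (the spectral argument proving \eqref{ExpectedDensity} in Section~\ref{s-FiniteDimCoho}). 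Alternatively you can bypass it: $-\im\Lie_\T$ with its maximal domain is the selfadjoint generator of the strongly continuous unitary group $\pmb\A_t$ on $L^2(\N;F)$, so $\Lie_\T u=0$ is equivalent to $\pmb\A_t u=u$ for all $t$, hence to $\pmb\A_{\hat g}u=u$ for all $\hat g\in\hat G$ by density and strong continuity, and then $\Av u=\int_{\hat G}\pmb\A_{\hat g}u\,d\hat\mu(\hat g)=u$ directly. With either repair the argument is complete.
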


The proof consist of observing that $\Av$ is a selfadjoint projection. 

\begin{example}\label{ExampleOfAction}
Let $\N=S^5\subset \C^3$ be the unit sphere. With the standard coordinates $(z^1,z^2,z^3)$ on $\C^3$, let 
\begin{equation*}
\T=\im\big(\tau(z_1\partial_{z_1}-\overline z_1\partial_{\overline z_1})+(z_2\partial_{z_2}-\overline z_2\partial_{\overline z_2})+2(z_3\partial_{z_3}-\overline z_3\partial_{\overline z_3})\big)
\end{equation*}
with $0\ne \tau\in \R$. This vector field is real, tangent to $S^5$, and preserves the standard metric on $S^5$. The one-parameter group of diffeomorphisms generated by $\T$ is 
\begin{equation*}
\a_t(z)=(e^{\im \tau t}z_1,e^{\im t}z_2,e^{2\im  t}z_3),
\end{equation*}
Assuming $\tau/2\pi$ is irrational, the closure of $\set{\a_t:t\in\R}$ in the compact-open topology is the $2$ torus $G=S^1\times S^1$, with 
\begin{equation*}
\a_\omega(z)=(\omega_1z_2,\omega_2 z_2,\omega_2^2z_3).
\end{equation*}
Let $F=S^5\times \C^3$ as a trivial Hermitian vector bundle over $S^5$. Denoting by $(\zeta_1,\zeta_2,\zeta_3)$ the coordinates on $\C^3$ with respect to the standard basis, let the $\sigma_i\in \R$ be positive, define
\begin{equation*}
\hat \T= \T+\im\big(\sigma_1(\zeta_1\partial_{\zeta_1}-\overline \zeta_1\partial_{\overline \zeta_1})+\sigma_2(\zeta_2\partial_{\zeta_2}-\overline \zeta_2\partial_{\overline \zeta_2})+\sigma_3(\zeta_3\partial_{\zeta_3}-\overline \zeta_3\partial_{\overline \zeta_3})\big).
\end{equation*}
Again this vector field preserves the product Riemannian metric on $F$. The one-parameter group generated by $\hat \T$ is given by the unitary maps
\begin{equation*}
\A_t(z;\zeta)=(\a_t(z);e^{\im \sigma_1 t}\zeta_1,e^{\im \sigma_2 t}\zeta_2,e^{\im \sigma_3 t}\zeta_3).
\end{equation*}
Assuming that the $\sigma_i/2\pi$ are independent over $\Q$, the closure is 
\begin{equation*}
\hat G=(S^1)^2\times (S^1)^3
\end{equation*}
with 
\begin{equation*}
\A_{(\omega,\theta)}(z;\zeta)=(\omega_1z_2,\omega_2 z_2,\omega_2^2z_3;\theta_1\zeta_1,\theta_2\zeta_2,\theta_3\zeta_3),\quad (\omega,\theta)\in \hat G,\ (z;\zeta)\in F
\end{equation*}
and $\wp(\omega;\theta)=\omega$. The isotropy group of $G$ at a point $z^0=(0,0,z_3^0)\in S^5$ is 
\begin{equation*}
\isotropy = (S^1\times\set{1})\cup (S^1\times\set{-1})
\end{equation*}
(which is not connected). 

The following illustrates a situation that will come up in Section \ref{sLimit}. The preimage of $\isotropy$ by $\wp$ is 
\begin{equation*}
\preisotropy = (S^1\times\set{1}\times (S^1)^3) \cup (S^1\times\set{-1}\times (S^1)^3),
\end{equation*}
The subgroup $\hat G_0=\set{1}\times S^1\times (\set{1})^3\subset \hat G$ is transverse to $\preisotropy$ at the identity element, a finitely sheeted covering of (actually diffeomorphic to) $G_0=\set{1}\times S^1$, the latter also a double cover of the $G$-orbit $\set{(0,0,z_3)}\subset S^5$ via $G_0\ni \omega\mapsto a_\omega (z^0)$. Any $1$-dimensional subgroup $\hat G_0'\subset \hat G$ with transversal intersection to $\preisotropy$ has $\#(\hat G_0'\cap \preisotropy)=2$.

\end{example}


\section{Passage to $L^2$ traces}\label{L2Traces}

The approach to the Atiyah-Bott formula for the Lefschetz number using the heat kernels as mollifiers following Kotake \cite{Kotake} works here as well. Postponing the specifics to Section \ref{Limits}, suppose 
\begin{equation}\label{CommuteAndSmoothing}
\display{280pt}{
the operators $\Phi^q_s:L^2(\N;E^q)\to L^2(\N;E^q)$, $s>0$, commute with $\Lie_\T$ and have smooth Schwartz kernel.}
\end{equation}
Then, in particular, they map $L^2(\B;\E^q)$ to itself. We do not assert that $\Phi^q$ is obtained as a limit of the $\Phi^q_s$ as $s\to 0^+$.

\begin{lemma}
The operators 
\begin{equation*}
\Phi^q_s\big|_{L^2(\B;\E^q)}:L^2(\B;\E^q)\to L^2(\B;\E^q)
\end{equation*}
are trace class.
\end{lemma}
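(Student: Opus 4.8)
The plan is to show that $\Phi^q_s\big|_{L^2(\B;\E^q)}$ is trace class by factoring it through a smoothing operator on the compact manifold $\N$ and using the fact that the orthogonal projection $\Av^q$ onto $L^2(\B;\E^q)$ — or equivalently the spectral projection $\Pi^q_{0,0}$ plus the structure established in Section~\ref{s-FiniteDimCoho} — provides good control. First I would recall from \eqref{CommuteAndSmoothing} that $\Phi^q_s:L^2(\N;E^q)\to L^2(\N;E^q)$ has a smooth Schwartz kernel, hence is trace class \emph{as an operator on $L^2(\N;E^q)$}, since $\N$ is closed and smoothing operators on sections of a bundle over a closed manifold are trace class (indeed of trace class with trace equal to the integral of the pointwise trace of the kernel on the diagonal against the density $\m$). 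Next, since $\Phi^q_s$ commutes with $\Lie_\T$ it leaves the closed subspace $L^2(\B;\E^q)=\ker\Lie_\T$ invariant, so if $\Av^q:L^2(\N;E^q)\to L^2(\B;\E^q)$ denotes the orthogonal projection (Lemma~\ref{AvIsProjector}), then
\begin{equation*}
\Phi^q_s\big|_{L^2(\B;\E^q)}=\Av^q\circ \Phi^q_s\circ \iota,
\end{equation*}
where $\iota:L^2(\B;\E^q)\hookrightarrow L^2(\N;E^q)$ is the inclusion.

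The key step is then the ideal property of the trace class: if $T$ is trace class on a Hilbert space $\Ha$ and $A,B$ are bounded operators, then $ATB$ is trace class. Applying this with $T=\Phi^q_s$ (trace class on $L^2(\N;E^q)$), $A=\Av^q$, and $B=\iota$, both of operator norm $1$, we conclude that $\Av^q\circ\Phi^q_s\circ\iota$ is trace class as an operator from $L^2(\B;\E^q)$ to itself. Equivalently, one observes that $P=\Av^q$ is a bounded projection with range $L^2(\B;\E^q)$, that $P\Phi^q_sP=\Phi^q_sP=P\Phi^q_s$ because $\Phi^q_s$ commutes with $P$ (commuting with $\Lie_\T$ forces commuting with the spectral projection of $-\im\Lie_\T$ onto its kernel, which is exactly $\Av^q$), and that $P\Phi^q_sP$ is trace class as a compression of a trace class operator; its restriction to the invariant subspace $L^2(\B;\E^q)$ is precisely $\Phi^q_s\big|_{L^2(\B;\E^q)}$, which is therefore trace class.

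The only point requiring a little care — and the main (modest) obstacle — is the identification of the orthogonal projection onto $\ker\Lie_\T$ with an operator that visibly commutes with $\Phi^q_s$; but this is immediate from the fact that $-\im\Lie_\T$ is formally selfadjoint \eqref{iTisSymmetric} with a selfadjoint closure (as noted in Section~\ref{s-FiniteDimCoho} the relevant first order operators on the closed manifold $\N$ have a unique closed extension), so its spectral projection onto $\set 0$ is $\Av^q$, and any bounded operator commuting with a selfadjoint operator commutes with its spectral projections. One may alternatively bypass spectral theory entirely: $\Phi^q_s$ restricted to the invariant subspace $L^2(\B;\E^q)$ has Schwartz kernel obtained by applying $\Av^q\otimes\Av^q$ (the averaging in each variable) to the smooth kernel of $\Phi^q_s$ on $\N\times\N$, which, by the smoothness-preservation of $\Av^q$ recorded after \eqref{defOfAverage}, is again smooth on $\N\times\N$ and supported in the relevant sense so that it represents a trace class operator on $L^2(\B;\E^q)$, with trace equal to $\int_\N \tr\big(K_{\Av^q\Phi^q_s\Av^q}(p,p)\big)\,d\m(p)$. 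Either route gives the claim; the details are routine and I would relegate them to a sentence.
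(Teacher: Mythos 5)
Your proof is correct and follows essentially the same route as the paper: both arguments rest on the fact that $\Phi^q_s$, being smoothing on the closed manifold $\N$, is trace class on $L^2(\N;E^q)$, and that its restriction to the closed invariant subspace $L^2(\B;\E^q)$ is therefore trace class. The paper phrases the last step via an orthonormal basis adapted to the splitting $L^2(\N;E^q)=L^2(\B;\E^q)\oplus L^2(\B;\E^q)^\perp$ and absolute convergence of the diagonal sums, whereas you invoke the ideal property of the trace class applied to $\Av^q\circ\Phi^q_s\circ\iota$ — a slightly more careful justification of the same compression argument.
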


Indeed, let $\set{\psi^0_k}$ be an orthonormal basis of $L^2(\B;\E^q)$, let $\set{\psi^\perp_k}$ be one of $L^2(\B;\E^q)^\perp$, together denoted $\set{\psi_k}$. Then the absolute convergence of 
\begin{equation*}
\Tr (\Phi^q_s) =\sum_k (\Phi^q_s\psi_k,\psi_k)
\end{equation*}
implies that of 
\begin{equation*}
\Tr (\Phi^q_s\big|_{L^2(\B;\E^q)}) =\sum_k (\Phi^q_s\psi^0_k,\psi^0_k).
\end{equation*}

Assume further that 
\begin{equation}\label{sGeometricEndo}
\D_q\circ \Phi^q_s=\Phi^{q+1}_s\circ \D_q,
\end{equation}
so they also give a geometric endomorphism. Followed by projections they then give maps $\Ha^q\to \Ha^q$; assume
\begin{equation}\label{PhiAt0}
\Phi^q_s = \Phi^q\text{ on }\Ha^q.
\end{equation}

\begin{proposition}
If \eqref{CommuteAndSmoothing}, \eqref{sGeometricEndo}, and \eqref{PhiAt0} hold, then Lefschetz number of the complex \eqref{TheGeneralComplex} is given by
\begin{equation}\label{LefshetzAsL2trace}
L_{f,\Phi}=\sum_{q=0}^m (-1)^q \Tr (\Phi^q_s\big|_{L^2(\B;\E^q)}).
\end{equation}
\end{proposition}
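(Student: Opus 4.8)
The plan is to run the mollifier argument of Kotake \cite{Kotake} in the form of a purely linear-algebraic identity for the Hilbert complex \eqref{DWithDomain}, leaning on the Hodge theory already set up in Section~\ref{s-FiniteDimCoho}. Write $V^q=L^2(\B;\E^q)$ and $T^q=\Phi^q_s\big|_{V^q}$. By \eqref{CommuteAndSmoothing} each $T^q$ is a bounded operator on $V^q$ (smooth Schwartz kernel on a compact manifold), by the preceding Lemma it is trace class, and by \eqref{sGeometricEndo} the family $\set{T^q}$ is a cochain map, $\D_q T^q=T^{q+1}\D_q$. It therefore suffices to prove the algebraic identity $\sum_q(-1)^q\Tr T^q=\sum_q(-1)^q\tr\Phi^q_\h$, since the right-hand side is $L_{f,\Phi}$ by \eqref{HarmonicLefschetz}.

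First I would isolate the closed subspaces involved. Put $Z^q=\ker\D_q$ and $B^q=\rg\D_{q-1}$; the latter is closed thanks to the Hodge decomposition $V^q=\Ha^q\oplus\rg\D^\star\oplus\rg\D$ of Section~\ref{s-FiniteDimCoho}, so that $Z^q=\Ha^q\oplus B^q$ and the orthogonal complement $C^q=\rg\D_q^\star$ of $Z^q$ in $V^q$ is carried by $\D_q$, via the closed range theorem, isomorphically onto $B^{q+1}$. Since $T^q$ commutes with $\D$ it preserves both $B^q$ and $Z^q$, so with respect to the orthogonal splitting $V^q=B^q\oplus\Ha^q\oplus C^q$ it is block upper triangular (as $T^q(B^q)\subseteq B^q$ and $T^q(\Ha^q)\subseteq Z^q$), with diagonal blocks $T^q\big|_{B^q}$, the harmonic compression $\Pi^q_{0,0}\circ T^q\big|_{\Ha^q}$, and $\Pi_{C^q}\circ T^q\big|_{C^q}$, where $\Pi_{C^q}$ denotes the orthogonal projection onto $C^q$. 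Since the diagonal blocks of a trace-class operator relative to an orthogonal decomposition are trace class with traces summing to $\Tr T^q$, and since the middle block equals $\Phi^q_\h$ by \eqref{PhiAt0}, this yields
\[
\Tr T^q=\Tr\big(T^q\big|_{B^q}\big)+\tr\Phi^q_\h+\Tr\big(\Pi_{C^q}\circ T^q\big|_{C^q}\big).
\]
Finally, intertwining with $\D_q T^q=T^{q+1}\D_q$ and using that $\D_q:C^q\to B^{q+1}$ is a topological isomorphism, I would identify the last term with $\Tr\big(T^{q+1}\big|_{B^{q+1}}\big)$, with the conventions $B^0=0$ and $B^{m+1}=0$.

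The alternating sum of these identities over $q=0,\dots,m$ then telescopes, the terms $\Tr(T^q|_{B^q})$ and $\Tr(T^{q+1}|_{B^{q+1}})$ cancelling between consecutive degrees and leaving $\sum_q(-1)^q\Tr T^q=\sum_q(-1)^q\tr\Phi^q_\h=L_{f,\Phi}$, as desired. I expect the main obstacle to be analytic rather than combinatorial: one must verify that $\rg\D_{q-1}$ and $\rg\D_q^\star$ are genuinely closed, so that both the block decomposition and the closed-range isomorphism $C^q\cong B^{q+1}$ are legitimate, and that the blocks of a trace-class operator along this $\D$-adapted orthogonal splitting are again trace class with additive traces — all of which rests on the Hodge theory of Section~\ref{s-FiniteDimCoho} together with standard facts about trace-class operators on Hilbert space. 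A secondary, more delicate bookkeeping point is that $\Phi^q_s$ need not preserve $\Ha^q$ or commute with $\D^\star$, so the identification of the middle diagonal block with $\Phi^q_\h$ must proceed only through the cochain-map property and the orthogonal projection $\Pi^q_{0,0}$, exactly as encoded in \eqref{PhiAt0}.
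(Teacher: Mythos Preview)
Your argument is correct but takes a different route from the paper. The paper decomposes $L^2(\B;\E^q)$ spectrally as $\bigoplus_\lambda \Eigen^q_{\lambda,0}$ using the elliptic operator $P_q$; since $\D_q$ intertwines these finite-dimensional eigenspaces, the compressions $\Pi^q_{\lambda,0}\Phi^q_s|_{\Eigen^q_{\lambda,0}}$ give a cochain map on each eigencomplex, which is acyclic for $\lambda\ne 0$, so only $\lambda=0$ survives and one lands directly on $\Ha^q$. Your approach instead uses only the three-term Hodge splitting $V^q=B^q\oplus\Ha^q\oplus C^q$, block upper-triangularity of $T^q$ coming from the cochain-map property, the similarity $\D_q:C^q\xrightarrow{\sim}B^{q+1}$ to match the outer diagonal blocks across degrees, and a telescoping sum. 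What you gain is generality: your proof needs only that $\D$ has closed range and finite-dimensional cohomology, not the full eigenspace structure of $P_q$, so it would apply verbatim to any Fredholm Hilbert complex. The paper's version is arguably more transparent in this specific setting since each step lives in finite dimensions, and it dovetails with the heat-kernel regularization introduced in the next section (where $\Phi^q_s=\Phi^q e^{-sP_q}$ actually does preserve the eigenspaces).
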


The proof consists of a sequence of well known observations. Let 
\begin{equation*}
\Pi^q_{\lambda,0}:L^2(\B;\E^q)\to L^2(\B;\E^q)
\end{equation*}
be the orthogonal projection on $\Eigen^q_{\lambda,0}\subset L^2(\B;\E^q)$, the kernel of $\Lie_\T$ in the eigenspace  of $P_q$ corresponding to the eigenvalue $\lambda$. One then has
\begin{equation*}
\Tr (\Phi^q_s\big|_{L^2(\B;\E^q)})=\sum _\lambda \Tr (\Pi^q_{\lambda,0}\circ \Phi^q_s\big|_{\Eigen^q_{\lambda,0}}).
\end{equation*}
It is easy to see that $\D_q$ maps $\Eigen^q_{\lambda,0}$ into $\Eigen^{q+1}_{\lambda,0}$ for each $\lambda$, so one has a complex
\begin{equation}\label{EigenDeRham}
\cdots \to \Eigen^{q-1}_{\lambda,0} \xrightarrow{\D_{q-1}} \Eigen^q_{\lambda,0} \xrightarrow{\D_q} \Eigen^{q+1}_{\lambda,0}\to \cdots
\end{equation}
of finite-dimensional spaces. But the complex is acyclic if $\lambda\ne 0$, so 
\begin{equation*}
\sum_{q=0}^m (-1)^q \Tr (\Phi^q_s\big|_{L^2(\B;\E^q)})=\sum_{q=0}^m (-1)^q \Tr (\Pi^q_{0,0}\circ \Phi^q_s\big|_{\Eigen^q_{0,0}}).
\end{equation*}
Observing now that $\Eigen^q_{0,0}$ is $\Ha^q$, the harmonic space of the complex \eqref{TheGeneralComplex} in degree $q$, one gets from \eqref{PhiAt0} that \eqref{LefshetzAsL2trace} is \eqref{HarmonicLefschetz}. 

\medskip
As is well known the trace of  $\Phi^q_s:L^2(\N;E^q)\to L^2(\N;E^q)$ is given by an integral along the diagonal in $\N\times\N$. With the correct interpretation, also the trace of $\Phi^q_s\big|_{L^2(\B;\E^q)}$ can be written as an integral over the diagonal of $\B\times \B$. This however makes the analysis much harder if $\B$ is not a smooth manifold. Alternatively, we use:

\begin{proposition}
The operator $\Phi^q_s\circ \Av^q:L^2(\N;E^q)\to L^2(\N;E^q)$ is trace class. Furthermore
\begin{equation*}
\Tr(\Phi^q_s\big|_{L^2(\B;\E^q)})=\Tr(\Phi^q_s\circ\Av^q).
\end{equation*}
\end{proposition}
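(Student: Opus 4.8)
The plan is to reduce everything to two facts already in hand: that $\Av^q$ is the orthogonal projection of $L^2(\N;E^q)$ onto $L^2(\B;\E^q)$ (Lemma~\ref{AvIsProjector}), and that $\Phi^q_s$, commuting with $\Lie_\T$, maps $L^2(\B;\E^q)=\ker\Lie_\T$ into itself, with $\Phi^q_s\big|_{L^2(\B;\E^q)}$ trace class by the preceding lemma. I would write the orthogonal decomposition $L^2(\N;E^q)=L^2(\B;\E^q)\oplus L^2(\B;\E^q)^\perp$. Since $\Av^q$ is the identity on the first summand and zero on the second, while $\Phi^q_s$ carries the first summand into itself, the composition $\Phi^q_s\circ\Av^q$ annihilates $L^2(\B;\E^q)^\perp$ and agrees with $\Phi^q_s\big|_{L^2(\B;\E^q)}$ on $L^2(\B;\E^q)$. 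Equivalently, $\Phi^q_s\circ\Av^q=\jmath\circ\big(\Phi^q_s\big|_{L^2(\B;\E^q)}\big)\circ\Av^q$, where $\jmath$ is the inclusion $L^2(\B;\E^q)\hookrightarrow L^2(\N;E^q)$.

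From this factorization both assertions follow at once. For trace-classness: $\Phi^q_s$ is bounded on $L^2(\N;E^q)$ — its Schwartz kernel is smooth and $\N$ is compact, so it is in fact smoothing — and likewise $\Av^q$ and $\jmath$ are bounded; since $\Phi^q_s\big|_{L^2(\B;\E^q)}$ is trace class and the trace-class operators form a two-sided ideal in the bounded operators, $\Phi^q_s\circ\Av^q$ is trace class. For the trace identity: take an orthonormal basis of $L^2(\N;E^q)$ obtained by concatenating an orthonormal basis $\{\psi^0_k\}$ of $L^2(\B;\E^q)$ with one of $L^2(\B;\E^q)^\perp$. The complement vectors are killed by $\Av^q$, hence contribute nothing, while on the $\psi^0_k$ the operator agrees with $\Phi^q_s\big|_{L^2(\B;\E^q)}$, so summing the diagonal entries gives $\Tr(\Phi^q_s\circ\Av^q)=\sum_k(\Phi^q_s\psi^0_k,\psi^0_k)=\Tr(\Phi^q_s\big|_{L^2(\B;\E^q)})$.

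I do not expect any genuine obstacle; the substance was already extracted in Lemma~\ref{AvIsProjector} and in the trace-class lemma just above. The only points needing a line of justification are that one may evaluate the trace of $\Phi^q_s\circ\Av^q$ — once it is known to be trace class, hence its trace basis-independent — in a basis adapted to the orthogonal splitting, and that $\Phi^q_s$ is bounded on all of $L^2(\N;E^q)$ so that the ideal property of the trace class may legitimately be invoked.
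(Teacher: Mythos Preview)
Your proposal is correct and follows essentially the same route as the paper: use that $\Av^q$ is the orthogonal projection onto $L^2(\B;\E^q)$, pick an orthonormal basis adapted to the splitting $L^2(\B;\E^q)\oplus L^2(\B;\E^q)^\perp$, and read off the trace identity from $\Av^q\psi^\perp_k=0$, $\Av^q\psi^0_k=\psi^0_k$. The only cosmetic difference is in the trace-class assertion: the paper simply notes that $\Phi^q_s$ itself is trace class on the full $L^2(\N;E^q)$ (smooth kernel on a compact manifold) and invokes the ideal property with the bounded factor $\Av^q$, whereas you route through the factorization $\jmath\circ(\Phi^q_s|_{L^2(\B;\E^q)})\circ\Av^q$ and the preceding lemma---your own parenthetical already observes that $\Phi^q_s$ is smoothing, which would let you shortcut this.
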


The proof is elementary. The first assertion follows from the fact that $\Av^q$ is a bounded operator and that trace class operators form a two-sided ideal. For the second we use that $\Av^q$ is the orthogonal projection on $L^2(\B;\E^q)$, the fact that $\Phi^q_s$ preserved $L^2(\N;E^q)$, and the basis above:
\begin{multline*}
\Tr (\Phi^q_s\circ \Av^q)=\sum_k \big((\Phi^q_s\circ \Av^q)\psi^0_k,\psi^0_k\big)+\sum_k \big((\Phi^q_s\circ \Av^q) \psi^\perp_k,\psi^\perp_k\big)\\=\sum_k \big((\Phi^q_s\circ \Av^q) \psi^0_k,\psi^0_k\big)=\Tr(\Phi^q_s\big|_{L^2(\B;\E^q)})
\end{multline*}
using $\Av^q\psi^\perp_k=0$ and $\Av^q\psi^0_k=\psi^0_k$.

\medskip
This proposition is the key step. Now the rest of the argument can take advantage of classical distribution theory. Below and later sections we write $K_A$ for the Schwartz kernel of the operator $A$. 

\medskip

The trace of $\Phi^q_s\circ \Av^q$ is of course
\begin{equation*}
\Tr (\Phi^q_s\circ \Av^q) = \int_\N \tr K_{\Phi^q_s\circ \Av^q}(p,p)\,d\m(p).
\end{equation*}
It is convenient to rewrite this as a distributional pairing. Let $\Delta\subset \N\times \N$ be the diagonal, $\iota_\Delta:\N\to \N\times\N$ the diagonal map. The trace of a linear map $S:V\to V$ on a finite dimensional vector space is given as the pairing of its tensor representation, $S_\tensor \in V\otimes V^*$, with the tensor representation, $\calI_\tensor\in V^*\otimes V$, of the identity map $V^*\to V^*$:
\begin{equation*}
\tr S=\langle S_\tensor,\calI_\tensor\rangle.
\end{equation*}
Using this with the section $\calI_\tensor^q\in C^\infty(\N;E^{q,*}\otimes E^q)$  representing the identity map $E^{q,*}\to E^{q,*}$ we get
\begin{equation}\label{BeforeLimit}
\Tr (\Phi^q_s\circ \Av^q) = \int_\N \big\langle K_{\Phi^q_s\circ \Av^q}(p,p), \calI_\tensor^q(p)\big\rangle\,d\m(p)
=\big\langle \iota_\Delta^* K_{\Phi^q_s\circ \Av^q}, \calI_\tensor^q\big\rangle.
\end{equation}

Under the right conditions and an appropriate choice of $\Phi^q_s$ we will have, in Section \ref{Limits}, that $\iota^*_\Delta(K_{\Phi^q_s\circ \Av^q})$ converges as a distribution as $s\to 0^+$, for each $q$. Their pairings with the smooth sections $\calI_\tensor^q$ in the various degrees will give the formula in the style of Atiyah-Bott for the right hand side of the Lefschetz number.


\section{Wave front sets}\label{WaveFront}

Complexes will not be relevant in the next few sections, so we revert to the less specific setting of Section~\ref{ConnectionsGroupsAverage}. 

We are given a Hermitian vector bundle $F\to\N$ with Hermitian connection $\nabla$ whose curvature is a smooth section of $F\otimes \Wedge^2\Hor^*$. The group $\hat G$ is the closure of the one-parameter group of unitary bundle maps $F\to F$ generated by $\Lie_\T=\nabla_\T$, $\wp:\hat G\to G$ is the canonical homomorphism, the action $\A_{\hat g}:F\to F$ covers $\a_{\wp(\hat g)}$. We will be using the averaging operator $\Av:C^\infty(\N;F)\to C^\infty(\N;F)$. 

We are given a bundle homomorphism $\phi:f^*F\to F$ for which the analogue of \eqref{CDforPhi} holds, and define $\Phi:C^\infty(\N;F)\to C^\infty(\N;F)$ in analogy with \eqref{DefOfPhiq}. Let $K_{\Phi}$ be the Schwartz kernel of this operator. We wish to determine its wave front set, as well as that of $K_{\Phi\circ \Av}$. 

The Schwartz kernel of $f^*:C^\infty(\N)\to C^\infty(\N)$ is the $\delta$ along the graph $\Gamma$ of $f$, the distribution $K_{f^*}$ defined by 
\begin{equation*}
\langle K_{f^*},w\rangle =\int_\N \iota_\Gamma^*w\,d\m,\quad w\in C^\infty(\N\times \N),
\end{equation*}
with $\iota_\Gamma:\N\to \N\times \N$ given by $\iota_\Gamma(p)=(p,f(p))$. We will write a formula for $K_\Phi$ using $K_{f^*}$ which will give $\WF(K_{\Phi})=\WF(K_{f*})$.

The vector smooth bundle homomorphism $\phi:f^*F\to F$ can be viewed as a section of the vector bundle $F\boxtensor F^*\to \N\times \N$ along $\Gamma$. At $p\in \N$, the map $\phi(p):(f^*F)_p\to F_p$ sends an element of $F_{f(p)}$ to an element of $F_p$. Writing $\phi_\tensor(p,f(p))$ for the tensor representation of $\phi(p)$ in $F_p\otimes F^*_{f(p)}$ we get the claimed section
\begin{equation}\label{phiAsTensor}
(p,f(p))\mapsto \phi_\tensor(p,f(p)) 
\end{equation}
of $F\boxtensor F^*$ along $\Gamma$. Since $\Gamma$ is closed, there is an element in $C^\infty(\N \times \N;F\boxtensor F^*)$, which we continue to denote $\phi_\tensor$, whose restriction to $\Gamma$ is the section referred to above (Whitney \cite{Whitney1935}). 

We will now modify $\phi_\tensor$ in the complement of $\Gamma$ while keeping it smooth. Let $\A_{\hat g^{-1}}^*:F^*\to F^*$ be the homomorphism (covering $\a_{{\wp(\hat g)}}$) dual to  $\A_{\hat g^{-1}}:F\to F$. The commutativity of the diagram \eqref{CDforPhi} is reflected in the property
\begin{equation}\label{Requivariance}
\A_{\hat g}\boxtensor \A_{\hat g^{-1}}^* \big(\phi_\tensor (p,p')\big)=\phi_\tensor (\a_{\wp(\hat g)}(p),\a_{\wp(\hat g)}(p'))
\end{equation}
of $\phi_\tensor$ when $(p,p')\in \Gamma$ (i.e., when $p'=f(p)$). The section 
\begin{equation*}
\tilde \phi_\tensor(p,p')=\int_{\hat G} \A_{\hat g^{-1}}\boxtimes \A_{\hat g}^* \big(\phi_\tensor (\a_{\wp(\hat g)}(p),\a_{\wp(\hat g)}(p'))\big)\,d\hat \mu(\hat g)
\end{equation*}
satisfies \eqref{Requivariance} for all $(p,p')$ and agrees with the definition of $\phi_\tensor$ along $\Gamma$. Since averaging preserves smoothness we still have that $\tilde \phi_\tensor$ is smooth. So we may assume \eqref{Requivariance} already holds for $\phi_\tensor$ for any $(p,p')$.

The pointwise pairing of $\phi_\tensor$ and a smooth section $w$ of $F^*\boxtimes F$ gives a smooth complex valued function via 
\begin{equation*}
(p,p')\mapsto \langle \phi_\tensor,\psi\rangle_{(p,p')} =  \langle \phi_\tensor(p,p'),w(p,p')\rangle
\end{equation*}
which can be paired with $K_{f^*}$:
\begin{equation*}
\big\langle K_{f^*},\langle \phi_\tensor,w\rangle\big\rangle =\int_\N \iota_\Gamma^*\langle \phi_\tensor,w\rangle\,d\m =\int_\N\langle \phi_\tensor(p,f(p)),w(p,f(p))\rangle\,d\m(p).
\end{equation*}
We will write $\phi_{\tensor}K_{f^*}$ for this distribution:
\begin{equation*}
\langle  \phi_\tensor K_{f^*},w\rangle = \big\langle K_{f^*},\langle \phi_\tensor,w\rangle\big\rangle.
\end{equation*}

If $w=v\boxtimes u\in C^\infty(\N\times \N;F^*\boxtimes F)$, then 
\begin{equation*}
\langle\phi_\tensor(p,f(p)),w(p,f(p))\rangle = \langle\phi\big(p,u(f(p))\big),v(p)\rangle=\langle\Phi(u)(p),v(p)\rangle
\end{equation*}
and the above reduces to 
\begin{equation*}
\int_\N\langle \Phi(u)(p),v(p)\rangle\,d\m(p)=\langle \Phi(u),v\rangle.
\end{equation*}
Thus 
\begin{equation*}
K_{\Phi}=\phi_\tensor K_{f^*}
\end{equation*}
is the Schwartz kernel of $\Phi:C^\infty(\N;F)\to C^\infty(\N;F)$. 

In particular:

\begin{lemma}\label{WFfStar}
The wave front set of $K_\Phi$ is
\begin{equation*}
\WF(K_\Phi)=\set{\pmb \xi\oplus \pmb \xi'\in T^*_{(p,p')}(\N \times \N)\minus 0:p,p'\in \N,\ p'=f(p),\ f^*\pmb \xi'=-\pmb \xi}.
\end{equation*}
\end{lemma}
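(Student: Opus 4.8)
The plan is to reduce the computation of $\WF(K_\Phi)$ entirely to the computation of $\WF(K_{f^*})$, exploiting the already-established identity $K_\Phi = \phi_\tensor K_{f^*}$, in which $\phi_\tensor$ is a \emph{smooth} section of $F\boxtensor F^*$. The guiding principle is that multiplying a distribution by a smooth section of a vector bundle does not enlarge the wave front set, and in this case does not shrink it either: since $\phi_\tensor$ restricted to $\Gamma$ is an isomorphism $F_{f(p)}\to F_p$ at every point (it is a bundle homomorphism covering the identity, and the equivariance \eqref{Requivariance} plus nonvanishing along $\Gamma$ force invertibility fiberwise), we have $\WF(K_\Phi)=\WF(K_{f^*})$. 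So the real content is the scalar statement: the $\delta$-section along the graph $\Gamma$ of $f$ has wave front set equal to the conormal bundle $N^*(\iota_\Gamma)\setminus 0$.

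First I would recall the explicit formula $\langle K_{f^*},w\rangle=\int_\N \iota_\Gamma^* w\,d\m$, which exhibits $K_{f^*}$ as the pushforward $(\iota_\Gamma)_*(1\cdot \m)$ of the density $\m$ under the embedding $\iota_\Gamma:\N\to\N\times\N$, $p\mapsto(p,f(p))$. By H\"ormander's theorem on the wave front set of a pushforward under an embedding (\cite{Hor71}), or equivalently by the standard fact that the Schwartz kernel of the pullback operator along a map is a conormal distribution to the graph, one gets
\begin{equation*}
\WF(K_{f^*})\subset N^*(\iota_\Gamma)\setminus 0 = \set{(\pmb\xi,\pmb\xi')\in T^*_{(p,f(p))}(\N\times\N)\setminus 0 : \iota_\Gamma^*(\pmb\xi,\pmb\xi')=0}.
\end{equation*}
Unwinding $\iota_\Gamma^*(\pmb\xi,\pmb\xi')=\pmb\xi+f^*\pmb\xi'$ gives precisely the condition $f^*\pmb\xi'=-\pmb\xi$ in the statement. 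For the reverse inclusion, one checks that $K_{f^*}$ is a nondegenerate conormal distribution of the appropriate order along $\Gamma$ — its symbol, essentially the density $\m$, is nowhere zero — so no covectors in $N^*(\iota_\Gamma)\setminus 0$ drop out; alternatively one localizes near a point of $\Gamma$, chooses coordinates flattening $\Gamma$, and computes directly that $K_{f^*}$ is (a smooth nonvanishing multiple of) the Dirac mass in the normal variables, whose wave front set is the full conormal.

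Finally I would package the vector-bundle case: write $K_\Phi=\phi_\tensor K_{f^*}$ and invoke $\WF(\phi_\tensor K_{f^*})\subset \WF(K_{f^*})$ (smoothness of $\phi_\tensor$), together with the reverse inclusion coming from fiberwise invertibility of $\phi_\tensor|_\Gamma$ — concretely, pairing $K_\Phi$ against test sections of the form $v\boxtimes u$ and using that $\phi(p):F_{f(p)}\to F_p$ is an isomorphism lets one recover $K_{f^*}$ microlocally from $K_\Phi$. I expect the main obstacle to be purely bookkeeping rather than conceptual: correctly identifying the sign and the direction of the conormal condition $f^*\pmb\xi'=-\pmb\xi$ (as opposed to $f^*\pmb\xi'=\pmb\xi$), which hinges on the convention for $\iota_\Gamma$ and for the pairing defining $K_{f^*}$, and making sure the pushforward-vs-pullback wave front calculus is applied with the orientation matching the paper's sign conventions. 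Once that is pinned down, the argument is the standard conormal-distribution computation.
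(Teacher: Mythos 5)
Your route is the same as the paper's: Section~\ref{WaveFront} derives $K_\Phi=\phi_\tensor K_{f^*}$ with $\phi_\tensor$ a smooth (Whitney-extended, then averaged) section of $F\boxtensor F^*$, identifies $K_{f^*}$ as the conormal delta along the graph $\Gamma$ of $f$, and reads the lemma off from the standard wave-front calculus; the condition $f^*\pmb\xi'=-\pmb\xi$ is exactly $\iota_\Gamma^*(\pmb\xi\oplus\pmb\xi')=0$, with the same sign convention.

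One sub-claim in your write-up is unfounded: you assert that $\phi_\tensor|_\Gamma$ is fiberwise invertible because it covers the identity and satisfies the equivariance \eqref{Requivariance}. The standing hypothesis is only that $\phi:f^*F\to F$ is a bundle morphism (this is the Atiyah--Bott notion of geometric endomorphism), not an isomorphism; equivariance merely propagates the rank of $\phi$ along a $G$-orbit of $\Gamma$ and forces nothing at any particular orbit, and $\phi$ could even vanish identically, in which case $K_\Phi=0$ and the asserted equality fails. So your argument for the reverse inclusion $\WF(K_\Phi)\supset N^*(\iota_\Gamma)\setminus 0$ has a gap. To be fair, the paper itself states $\WF(K_\Phi)=\WF(K_{f^*})$ with no more justification, and only the inclusion $\subset$ is used downstream (it is what makes $\iota_\Delta^*K_{\Phi\circ\Av}$ well defined under \eqref{WFCondition}); but as a free-standing proof of the stated equality you would need either to add a nondegeneracy hypothesis on $\phi$ along $\Gamma$ or to weaken the conclusion to an inclusion.
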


We will also need the wave front sets of $K_{\Av}$ and $K_{\Phi\circ \Av}$. We make use of the definition of $\Av$ given in \eqref{Av} in terms of the maps in \eqref{CDforAv}. Both wave front sets are obtained as the conormal bundle of the graph of a function. First, we have 
\begin{equation*}
\WF'(K_{\hat \pi^*})=\set{\big((\pmb\xi\oplus 0)\oplus \pmb \xi''\big)\in T^*_{((p,g),p'')}\big((\N\times \hat G)\times \N\big)\minus 0:p''=p,\ \pmb \xi''=\pmb \xi}
\end{equation*}
using the notation of \cite[p. 165]{Hor71} which here means 
\begin{equation*}
\big((\pmb\xi\oplus 0)\oplus \pmb \xi''\big)\in \WF'(K_{\hat \pi^*}) \iff (\pmb\xi\oplus 0)\oplus (-\pmb \xi'')\in \WF(K_{\hat \pi^*}).
\end{equation*}
Next, we find $\WF'(K_{\hat \rho_*})$ taking advantage of the relation between the wave front set of the Schwartz kernel of an operator and that of its dual. Thus we first need to get a handle on $\WF(\rho^*)$, $\hat \rho^*:T^*\N\to T^*(\N\times \hat G)$. If $v\oplus w\in T_{(p,\hat g)}(\N\times \hat G)$, then 
\begin{equation*}
d\hat \rho(v\oplus w)=d\a_{\wp(\hat g)}(v)+\hat \kappa_{p,\hat g}(w)
\end{equation*}
for a certain linear map $\hat \kappa_{p,\hat g}:T_{\hat g}\hat G\to T_{\a_{\wp(\hat g)}(p)}\N$. With this notation, $\hat \rho^*$ is given by
\begin{equation*}
T^*_{\a_{\wp(\hat g)}(p)}\N\ni \pmb \xi \mapsto \hat \rho^*(\pmb \xi)=\a_{\wp(\hat g)}^*(\pmb \xi) \oplus \hat \kappa_{p,\hat g}^*(\pmb \xi)\in T_{(p,\hat g)}^*(\N\times \hat G),
\end{equation*}
therefore
\begin{multline*}
\WF'(K_{\hat \rho_*})=\{\pmb \xi\oplus (\pmb \xi'\oplus \pmb \nu')\in T^*_{(p,(\a_{\wp(\hat g^{-1})}(p),\hat g))}\big(\N\times (\N\times \hat G)\big): \\
p\in \N,\ \hat g\in \hat G,\ \pmb \xi'\oplus\pmb \nu'=\a_{\wp(\hat g)}^*(\pmb \xi) \oplus \hat \kappa_{\a_{\wp(\hat g^{-1})}(p),\hat g}^*(\pmb \xi)\}.
\end{multline*}
Consequently $\WF'(\Av)$, coveniently computed as the composition of two relations, consists of those $\pmb \xi\oplus \pmb \xi''\in T^*(\N\times\N)$ for which there is $\pmb \xi'\oplus \pmb \nu'\in T^*(\N\times \hat G)$ such that 
\begin{equation*}
(\pmb \xi'\oplus \pmb \nu')\oplus \pmb \xi''\in \WF'(K_{\hat \pi^*}) \text{ and } \pmb \xi\oplus (\pmb \xi'\oplus \pmb \nu')\in \WF'(K_{\hat \rho_*}).
\end{equation*}
If $\pmb \xi\in T^*_p\N$, $(\pmb \xi'\oplus \pmb \nu')\in T_{(p',\hat g)}^*(\N\times \hat G)$, and $\pmb \xi''\in T^*_{p''}\N$, the first condition implies
\begin{equation*}
\pmb \nu'=0,\ \pmb \xi'=\pmb \xi'', \text{ and }p'=p''
\end{equation*}
which together with the second gives
\begin{equation*}
p''=\a_{\wp(\hat g^{-1})}(p),\ \pmb \xi''=\a^*_{\wp(\hat g)}(\pmb \xi), \text{ and }\hat \kappa_{\a_{\wp(\hat g^{-1})}(p),\hat g}^*(\pmb \xi)=0.
\end{equation*}
Momentarily reverting to wave front set this gives
\begin{multline}\label{WFofAV}
\WF(\Av)=\{\pmb \xi\oplus \pmb \xi''\in T^*_{(p,p'')}(\N\times\N)\minus 0: p''=\a_{\wp(\hat g^{-1})}(p)\text{ for some }\hat g\in \hat G,\\
\hat \kappa_{\a_{\wp(\hat g^{-1})}(p),\hat g}^*(\pmb \xi)=0, \text{ and }\pmb \xi''=-\a^*_{\wp(\hat g)}(\pmb \xi)\}.\phantom{quad}
\end{multline}

In place of the map $\hat \kappa_{p,\hat g}$ we can use the similarly defined map determined by $\N\times G\to \N$, $\rho(p,g)=\a_g(p)$, because $\wp :\hat G\to G$ is surjective with the consequence that $\wp^*$ is injective. 
\begin{lemma}
Let $p\in \N$, let $\Y\subset \N$ be the $G$-orbit through some point of $\N$, and let $N^*\Y\subset T^*\N$ be the conormal bundle of $\Y$. For any $p\in \Y$ and $g\in G$:
\begin{enumerate}[\ $(i)$]
\item \label{Li} the kernel of $\kappa_{\a_{g^{-1}}(p),g}^*:T^*_p\N\to T^*_g G$ is $N^*_p\Y$;
\item \label{Lii} $\a_{g^{-1}}^*$ maps $N^*_p\Y$ onto $N^*_{\a_g(p)}\Y$ .
\end{enumerate}
\end{lemma}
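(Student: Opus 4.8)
The two assertions are local statements about the $G$-orbit $\Y$ and the isometry $\a_{g^{-1}}$. The plan is to unwind the definition of $\hat\kappa_{p,\hat g}$ (and its scalar analogue $\kappa_{p,g}$) coming from the differential of $\rho(p,g)=\a_g(p)$, and then observe that the image of $\kappa_{p,g}$ is exactly the tangent space to the orbit at the relevant point. First I would fix $g\in G$ and write $q=\a_{g^{-1}}(p)$, so $\a_g(q)=p$. By definition, for $v\oplus w\in T_q\N\oplus T_gG$ we have $d\rho(v\oplus w)=d\a_g(v)+\kappa_{q,g}(w)$, where $\kappa_{q,g}:T_gG\to T_p\N$ sends $w$ to $\frac{d}{ds}\big|_{s=0}\a_{g(s)}(q)$ for a curve $g(s)$ in $G$ with $g(0)=g$, $\dot g(0)=w$. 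Using the group structure, write $g(s)=g\exp(s\xi)$ for $\xi$ in the Lie algebra $\g$ of $G$; then $\a_{g(s)}(q)=\a_g(\a_{\exp(s\xi)}(q))$, so $\kappa_{q,g}(w)=d\a_g\big(\xi^\sharp_q\big)$ where $\xi^\sharp$ is the fundamental vector field of the $G$-action associated with $\xi$. Hence the image of $\kappa_{q,g}$ is $d\a_g$ applied to $\{\xi^\sharp_q:\xi\in\g\}=T_q\Y$, i.e.\ it is $T_p\Y$ (since $\a_g$ is a diffeomorphism carrying $\Y$ to itself, as $\Y$ is $G$-invariant).

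Part $(i)$ then follows immediately: for $\pmb\xi\in T^*_p\N$ we have $\kappa_{q,g}^*(\pmb\xi)=0$ iff $\langle\pmb\xi,\kappa_{q,g}(w)\rangle=0$ for all $w$, iff $\pmb\xi$ annihilates the image of $\kappa_{q,g}$, which is $T_p\Y$; that is exactly the statement $\pmb\xi\in N^*_p\Y$. For the replacement of $\hat\kappa$ by $\kappa$ asserted just before the lemma: the analogous computation for $\hat\rho(p,\hat g)=\a_{\wp(\hat g)}(p)$ gives, after differentiating $\wp$, that $\hat\kappa_{q,\hat g}=\kappa_{q,\wp(\hat g)}\circ d\wp_{\hat g}$; since $\wp$ is a surjective homomorphism of tori it is a submersion, so $d\wp_{\hat g}$ is surjective and the images (hence the kernels of the transposes) of $\hat\kappa_{q,\hat g}$ and $\kappa_{q,\wp(\hat g)}$ coincide. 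This justifies passing to the scalar map throughout.

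Part $(ii)$ is a formal consequence of $(i)$ together with the fact that $\a_{g^{-1}}$ is a diffeomorphism mapping $\Y$ onto $\Y$. Indeed, a diffeomorphism $h:\N\to\N$ with $h(\Y)=\Y$ satisfies $dh_p(T_p\Y)=T_{h(p)}\Y$, hence by duality $(h^{-1})^*=(h^*)^{-1}$ carries the annihilator $N^*_p\Y$ of $T_p\Y$ onto the annihilator $N^*_{h(p)}\Y$ of $T_{h(p)}\Y$; more directly, if $\pmb\xi\in N^*_p\Y$ then for any $u\in T_{\a_g(p)}\Y$ we have $d\a_{g^{-1}}(u)\in T_p\Y$, so $\langle\a_{g^{-1}}^*\pmb\xi,u\rangle=\langle\pmb\xi,d\a_{g^{-1}}(u)\rangle=0$, giving $\a_{g^{-1}}^*\pmb\xi\in N^*_{\a_g(p)}\Y$; surjectivity follows by applying the same reasoning to $\a_g$. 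Alternatively one can deduce $(ii)$ from $(i)$ by the cocycle-type identity $\kappa^*_{\a_{(gh)^{-1}}(p),gh}$ expressed in terms of $\kappa^*$ at $g$ and at $h$ composed with $\a^*$, but the direct argument above is cleaner.

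I do not expect a genuine obstacle here; the only point requiring a little care is the bookkeeping identifying $\kappa_{q,g}(w)$ with the value of a fundamental vector field pushed forward by $d\a_g$ — i.e.\ correctly distinguishing the base point $q=\a_{g^{-1}}(p)$ from $p$, and checking that right-translation in $G$ (writing $g(s)=g\exp(s\xi)$ rather than $\exp(s\xi)g$) is the choice that produces the fundamental vector field at $q$ rather than at $p$. Everything else is the standard fact that the orbit directions are precisely the image of the infinitesimal action, combined with $G$-invariance of $\Y$.
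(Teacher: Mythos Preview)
Your proof is correct and follows essentially the same approach as the paper: both identify the image of $\kappa_{\a_{g^{-1}}(p),g}$ with $T_p\Y$ (you via fundamental vector fields, the paper by noting it is the derivative of an orbit map), whence the kernel of the transpose is the annihilator $N^*_p\Y$; and both deduce $(ii)$ from the fact that $d\a_{g^{-1}}$ carries $T_{\a_g(p)}\Y$ isomorphically to $T_p\Y$. Your version is more careful with the base-point bookkeeping and also supplies the justification (which the paper only asserts) for replacing $\hat\kappa$ by $\kappa$ via the surjectivity of $d\wp$.
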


\begin{proof}
For fixed $p\in \N$, the map $G \ni g \mapsto \a_{g^{-1}}(p)\in \N$ is a submersion onto $\Y$ with derivative 
\begin{equation*}
\kappa_{\a_{g^{-1}}(p),g}:T_g G\to T_p\Y\subset T_p\N.
\end{equation*}
Thus the annihilator of the image of $\kappa_{\a_{g^{-1}}(p),g}$, the kernel of the dual map, is the conormal fiber to $\Y$ at $p$. This proves \eqref{Li}.

For \eqref{Lii} we just note that if $g\in G$ is fixed then $d\a_{g^{-1}}$ gives an isomorphism $T_{\a_{g}(p)}\Y \to T_p\Y$. 
\end{proof}

Keeping the notation of the lemma we have that  the part of $\WF(\Av)$ over $\Y\times \Y$ is
\begin{equation*}
\set{\pmb \xi\oplus\pmb \xi''\in N^*(\Y\times \Y)\minus 0: \pmb \xi''=-\a_g^*(\pmb \xi) \text{ for some }g\in G}
\end{equation*}
and $\Av$ has no wave front set over $\Y\times \Y'$ if the $G$-orbit $\Y'$ is disjoint from $\Y$. Note also that the fiber of the conormal bundle of $\Y \times \Y\subset \N\times \N$ at $(p,p')$ is $N^*_p\Y\oplus N^*_{p'}\Y$.

\medskip
Making now use of Lemma \ref{WFfStar} we get
\begin{multline}\label{WFofPhiAv}
\phantom{\quad}\WF(K_{\Phi\circ\Av})=\{\pmb \xi\oplus \pmb \xi''\in N^*\big(\Y \times f(\Y)\big)\minus 0:  \Y\text{ is a $G$-orbit}\\\text{ and }\pmb \xi=-(f\circ \a_g)^*(\pmb \xi'')\text{ for some }g\in G\}.\quad
\end{multline}
Note if $\pmb \xi\oplus \pmb \xi''\in \WF(K_{\Phi\circ \Av})$ then $\pmb \xi$ and $\pmb \xi''$ are conormal to the same orbit. Observe also that we may have $\pmb \xi=0$ with $\pmb \xi''\ne 0$.


\section{Transversality}\label{discreteness}

We continue with the general setup of the previous section. The condition that $\WF(K_{\Phi\circ \Av}) \cap \dot N^*(\iota_\Delta)$ is empty implies  that $\iota_\Delta^*K_{\Phi\circ\Av}$ is defined in a natural way (H\"ormander \cite{Hor71}); $\iota_\Delta:\N\to \N\times\N$ the diagonal embedding. We will now derive a necessary and sufficient condition for this condition to hold (generalizing $\Delta\trans \Gamma$) with the ultimate aim to take the limit in \eqref{BeforeLimit} as $s\to 0^+$ with a simpler regularization of $K_{\Phi^q\circ \Av}$. 

\medskip
In view of \eqref{WFofPhiAv}, a covector $\pmb \xi\oplus \pmb \xi''\in T^*_{(p,p'')}\N\times\N$ belongs to $\WF(K_{\Phi\circ \Av}) \cap \dot N^*(\iota_\Delta)$ if and only if 
\begin{equation*}
\display{300pt}{
$\pmb \xi$ and $\pmb \xi''$ are conormal to the same $G$-orbit $\Y$, there is  $g\in G$ such that $\pmb \xi=-(f\circ \a_g)^*\pmb \xi''$, and $\pmb \xi=-\pmb\xi''$,
}
\end{equation*}
which in particular implies that $f(\Y)\subset \Y$ and
\begin{equation*}
\pmb \xi=(f\circ \a_g)^*\pmb \xi.
\end{equation*}
Let 
\begin{equation*}
\Fix(\pmb f)=\set{\Y:\Y\text{ is a $G$-orbit and }f(\Y)= \Y}
\end{equation*}
If $\Y\in \Fix(\pmb f)$ and $p_0\in \Y$, then $f(p_0)=\a_{g_0^{-1}}(p_0)$ for some $g_0$. Then $\a_{g_0}\circ f$ has $p_0$ as a fixed point, and by equivariance, all points of $\Y$ are fixed by $\a_{g_0}\circ f$. Therefore:

\begin{proposition}\label{WFvsDet}
The condition 
\begin{equation}\label{WFCondition}
\WF(K_{\Phi\circ \Av}) \cap \dot N^*(\iota_\Delta)=\emptyset
\end{equation}
holds if and only if for every $\Y\in \Fix(\pmb f)$, if $g_0 \in G$ is such that $\a_{g_0}\circ  f$ is the identity on $\Y$, then the equation
\begin{equation}\label{DetCondition}
\big(\Id-(\a_{g_0}\circ  f)^*\big)\pmb \xi=0,\quad \pmb \xi \in N^*\Y
\end{equation}
has no nonzero solution. The element $g_0$ is determined only modulo the isotropy group of $p_0$ in $G$.
\end{proposition}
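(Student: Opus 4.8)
The plan is to read the answer off the two descriptions already at hand: the formula \eqref{WFofPhiAv} for $\WF(K_{\Phi\circ\Av})$, and the elementary observation that $\dot N^*(\iota_\Delta)$ consists of the covectors $\pmb\xi\oplus(-\pmb\xi)$, $\pmb\xi\ne 0$, based over diagonal points $(p,p)$. Intersecting these, a nonzero element of $\WF(K_{\Phi\circ\Av})\cap\dot N^*(\iota_\Delta)$ is of the form $\pmb\xi\oplus(-\pmb\xi)$ over some $(p,p)$, with $\pmb\xi\in N^*_p\Y$ for a $G$-orbit $\Y\ni p$, and, by \eqref{WFofPhiAv}, with $p\in f(\Y)$ and $\pmb\xi=-(f\circ\a_g)^*(-\pmb\xi)=(f\circ\a_g)^*\pmb\xi$ for some $g\in G$ — an equation which presupposes $f(\a_g(p))=p$. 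Since distinct $G$-orbits are disjoint and $f(\Y)$ is itself a $G$-orbit, $p\in\Y\cap f(\Y)$ forces $f(\Y)=\Y$, i.e.\ $\Y\in\Fix(\pmb f)$; conversely such data visibly produce a point of the intersection. Thus the first step records: \eqref{WFCondition} \emph{fails} if and only if for some $\Y\in\Fix(\pmb f)$, some $p\in\Y$ and some $g\in G$ with $f(\a_g(p))=p$, the endomorphism $(f\circ\a_g)^*$ of $N^*_p\Y$ fixes a nonzero covector.

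The second step matches these $g$'s with the $g_0$'s of \eqref{DetCondition}. Fix $\Y\in\Fix(\pmb f)$ and a point $p_0\in\Y$; as in the paragraph preceding the statement, pick $g_*\in G$ with $\a_{g_*}\circ f$ the identity on $\Y$, so $f=\a_{g_*^{-1}}$ on $\Y$. Then $f(\a_g(p_0))=\a_{g_*^{-1}g}(p_0)$, which equals $p_0$ exactly when $g\in g_*\isotropy_{p_0}$. Because $G$ is abelian, $\isotropy_p=\isotropy_{p_0}=:\isotropy_\Y$ for every $p\in\Y$, and $\a_h$ is the identity on $\Y$ whenever $h\in\isotropy_\Y$; a short computation then shows that the $g$ with $f(\a_g(p_0))=p_0$ are exactly the $g_0$ with $\a_{g_0}\circ f=\id$ on $\Y$, and that they fill the single coset $g_*\isotropy_{p_0}$ — which is the last assertion of the proposition. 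For any such $g_0$ the map $f\circ\a_{g_0}=\a_{g_0}\circ f$ fixes $p_0$ and fixes $\Y$ pointwise, so $(f\circ\a_{g_0})^*=(\a_{g_0}\circ f)^*$ indeed restricts to an endomorphism of $N^*_{p_0}\Y$ and \eqref{DetCondition} is meaningful.

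The third, essentially cosmetic step is that the base point is immaterial: for $p=\a_h(p_0)$ the identity $(\a_{g_0}\circ f)\circ\a_h=\a_h\circ(\a_{g_0}\circ f)$ conjugates $(\a_{g_0}\circ f)^*\big|_{N^*_p\Y}$ to $(\a_{g_0}\circ f)^*\big|_{N^*_{p_0}\Y}$, so solvability of \eqref{DetCondition} with $\pmb\xi\ne 0$ depends only on $\Y$ and $g_0$. Assembling the three steps, \eqref{WFCondition} fails if and only if there are $\Y\in\Fix(\pmb f)$ and $g_0\in G$ with $\a_{g_0}\circ f=\id$ on $\Y$ for which \eqref{DetCondition} has a nonzero solution, which is precisely the negation of the asserted condition.

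Most of this is bookkeeping, much of it already prepared in the lines preceding the proposition, and I anticipate no real difficulty. The one point demanding care — and where a careless argument would slip — is the quantifier matching in the second step: one must check that as $g$ runs through $\{g:f(\a_g(p_0))=p_0\}$, the operators $(\a_{g_0}\circ f)^*\big|_{N^*_{p_0}\Y}$ one obtains \emph{exhaust}, rather than merely sample, the family attached to the admissible $g_0$'s, so that the condition must be quantified over \emph{all} such $g_0$. It genuinely depends on the choice: two admissible $g_0$'s differ by some $\a_h$ with $h\in\isotropy_\Y$, and such an $\a_h$, though the identity on $\Y$, in general acts nontrivially on $N^*_{p_0}\Y$, so $\Id-(\a_{g_0}\circ f)^*$ can be invertible for one admissible $g_0$ and singular for another.
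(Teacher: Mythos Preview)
Your proof is correct and follows essentially the same approach as the paper: both arguments read off the intersection from the explicit description \eqref{WFofPhiAv} of $\WF(K_{\Phi\circ\Av})$ together with the fact that $\dot N^*(\iota_\Delta)$ consists of anti-diagonal covectors over diagonal points. The paper's argument is the terse paragraph immediately preceding the proposition; your treatment is more thorough about the quantifier matching in your second step and the base-point independence in your third, both of which the paper leaves implicit, and your closing remark---that distinct admissible $g_0$'s can genuinely yield different operators on $N^*_{p_0}\Y$, so the universal quantifier over $g_0$ is essential---is a useful clarification the paper does not spell out.
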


The support of $K_{\Phi\circ \Av}$ is 
\begin{equation*}
\bigcup_\Y \Y\times f(\Y),
\end{equation*}
the preimage of the graph of $\pmb f:\B\to \B$ by the canonical  map $\N\times \N\to \B\times \B$. Indeed, to see that $K_{\Phi\circ \Av}$ vanishes in the complement of this set, consider two open sets $\calU$, $\calV\subset \N$ each invariant under $G$ with $f^{-1}(\calU)\cap \calV=\emptyset$. If $u\in C^\infty(\B;\F)$ has $\supp(u)\subset \calU$, then $\supp (\Phi\circ\Av)(u)\subset f^{-1}(\calU)$, therefore if $v\in C^\infty(\B;\F^*)$ has $\supp v\subset \calV$, then $\langle v,(\Phi\circ\Av)(u)\rangle=0$. Thus $K_{\Phi\circ\Av}$ vanishes on $\calV\times \calU$. As expected, the intersection of $\supp(K_{\Phi\circ \Av})$ and the image of $\iota_\Delta$ is 
\begin{equation*}
\set{(p,p')\in \N\times \N:\text{ there is  $g\in G$ such that $\a_g(p)=p'=f(p)$}}.
\end{equation*}
Thus
\begin{equation*}
\supp( K_{\Phi\circ \Av})\cap \Delta=\bigcup_{\Y\in \Fix(\pmb f)} \Y\times\Y.
\end{equation*}

\begin{theorem}\label{DiscreteFixedPoints}
If \eqref{WFCondition} holds, then $\Fix(\pmb f)$ is a discrete subset of $\B$.
\end{theorem}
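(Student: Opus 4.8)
The plan is to argue by contradiction, exploiting the equivariant analogue of the fact that a transverse intersection of two compact submanifolds is isolated. Suppose $\Fix(\pmb f)$ is not discrete in $\B$. Then there is a sequence of distinct orbits $\Y_k\in\Fix(\pmb f)$ with $\Y_k\to\Y_\infty$ for some $\Y_\infty\in\B$; since $\B$ is Hausdorff and the quotient map $\pi:\N\to\B$ is proper (as $G$ is compact), $\Y_\infty\in\Fix(\pmb f)$ as well, and we may pick $p_k\in\Y_k$, $p_k\to p_\infty\in\Y_\infty$ in $\N$. For each $k$, since $f(\Y_k)=\Y_k$, there is $g_k\in G$ with $\a_{g_k}\circ f$ the identity on $\Y_k$; in particular $(f\circ\a_{g_k})^*$ acts on $N^*_{p_k}\Y_k$. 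By compactness of $G$ we may pass to a subsequence so that $g_k\to g_\infty\in G$, and $\a_{g_\infty}\circ f$ is then the identity on $\Y_\infty$.

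Next I would produce, for each $k$, a nonzero conormal covector fixed by $(f\circ\a_{g_k})^*$ at $p_k$, to contradict Proposition~\ref{WFvsDet}. The key point is that the orbits $\Y_k$ accumulate onto $\Y_\infty$ but are distinct; I would use the Riemannian structure (via the normal exponential map of the foliation, available because by Molino's theorem the foliation by $G$-orbits is Riemannian) to find, for $k$ large, a unit normal vector $v_k\in N_{p_k}\Y_k$ pointing "toward" $\Y_\infty$, or more precisely a vector along which the distance function to $\Y_\infty$ decreases. Because $\a_{g_k}\circ f$ fixes $\Y_k$ pointwise and is an isometry composed with $f$ (with $f_*\T=\T$, so $f$ preserves the foliation and hence $d(\a_{g_k}\circ f)$ preserves the splitting $T_{p_k}\N=T_{p_k}\Y_k\oplus N_{p_k}\Y_k$), the map $A_k:=d(\a_{g_k}\circ f)|_{N_{p_k}\Y_k}$ is a linear automorphism of $N_{p_k}\Y_k$. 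The obstruction to transversality in~\eqref{DetCondition} is precisely that $A_k^*-\Id$ is singular on $N^*_{p_k}\Y_k$, equivalently that $A_k-\Id$ is singular on $N_{p_k}\Y_k$. So it suffices to show $A_k$ has $1$ as an eigenvalue for all large $k$.

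To get that eigenvalue I would compare nearby orbits. Consider the flow of a vector field transporting $\Y_\infty$ a short way toward $\Y_k$; more concretely, let $\gamma$ be a minimizing geodesic from $\Y_\infty$ to $\Y_k$ realizing the distance $d(\Y_\infty,\Y_k)=:d_k\to0$, meeting $\Y_k$ orthogonally at some point $q_k$. The holonomy of the Riemannian foliation identifies small normal slices, and under this identification $\a_{g_k}\circ f$ acts on the normal slice at $q_k$ fixing the origin; since it also fixes (setwise) the leaf $\Y_\infty$ sitting at normal distance $d_k$ — here I use that $\a_{g_k}\to\a_{g_\infty}$ so $\a_{g_k}\circ f$ very nearly fixes $\Y_\infty$, and a limiting/degree argument — the induced linear map on the normal space must fix the direction of $\gamma$ to leading order, forcing an eigenvalue approaching $1$; since the set of eigenvalues of $A_k$ near $1$ that can occur is constrained (the relevant quantity $\det(A_k-\Id)$ varies continuously and would have to vanish in the limit), one concludes $\det(A_k-\Id)=0$ for infinitely many $k$. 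That contradicts Proposition~\ref{WFvsDet}, and hence $\Fix(\pmb f)$ is discrete.

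The main obstacle is making the third paragraph rigorous: turning "$\Y_k$ accumulates on $\Y_\infty$ and both are invariant under a map that is $C^0$-close to an isometry" into the statement "$d(\a_{g_k}\circ f)$ has eigenvalue $1$ on the normal space." The clean way is probably to avoid the geometry of accumulation altogether and instead work with a single $G$-orbit directly: fix $\Y\in\Fix(\pmb f)$, choose $p_0\in\Y$ and $g_0$ with $\a_{g_0}\circ f=\id$ on $\Y$, and show that \emph{if}~\eqref{DetCondition} has only the zero solution then $\Y$ is isolated in $\Fix(\pmb f)$ — i.e. argue the contrapositive locally. For this, in a Riemannian-foliation tubular neighborhood of $\Y$ one linearizes: a nearby invariant orbit would give, in the normal slice, a fixed point of the "transverse return map" induced by $\a_{g}\circ f$ (for a corrected $g$ near $g_0$), and the non-singularity of $\Id-(\a_{g_0}\circ f)^*$ on $N^*\Y$ is exactly the hypothesis of the implicit function theorem ruling out such nearby fixed points. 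This keeps everything in a single chart and is the version I would actually write out, pushing the genuine content into the normal form for Riemannian foliations near a closed leaf.
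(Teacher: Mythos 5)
Your overall strategy --- argue by contradiction with a sequence of distinct fixed orbits accumulating at $\Y_\infty$, pass to a transverse slice, and play the accumulation off against the nonsingularity of $\Id-(\a_{g_0}\circ f)^*$ on the conormal space (Proposition \ref{WFvsDet}) --- is exactly the paper's. But the step you yourself flag as "the main obstacle" is the entire content of the proof, and neither of your two sketches closes it. In the geodesic/holonomy version, "an eigenvalue approaching $1$" does not yield "$\det(A_k-\Id)=0$ for infinitely many $k$"; that inference is a non sequitur. What a correct version of that argument gives is vanishing of the determinant \emph{at the limit orbit} $\Y_\infty$, and to get even that you must actually prove that the normalized displacement vectors converge to a fixed vector of the linearization. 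The paper does this by a rescaling argument: writing $v_j=\mathfrak e^{-1}(p_j)\in N_{p_0}\Y_\infty$ for the slice coordinates of $p_j\in\Y_j\cap\X$, extracting a limit $\hat v$ of $\hat v_j=v_j/|v_j|$, and using first-order Taylor expansions with \emph{uniform} quadratic remainders to conclude $\tilde f^{(1)}(\hat v)=\hat v$ with $\hat v\neq0$. That blow-up computation is what your proposal is missing.

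The second gap is the isotropy correction, which your "implicit function theorem in the normal slice" version quietly absorbs into "a corrected $g$ near $g_0$". The point $\proj(f(p_j))$ and $p_j$ both lie in $\X\cap\Y_j$, so they differ by an element $g_j$ of the isotropy group $\isotropy$ of $p_0$, and this $g_j$ genuinely depends on $j$. You therefore do not have a fixed point of a single transverse return map $\a_{g_0}\circ f$, but of a family $\a_{h_j}\circ\proj\circ f$ with $h_j\in\isotropy$; the inverse function theorem cannot be applied to one map. The paper handles this by compactness of $\isotropy$ (pass to a subsequence $g_j\to g_0$, replace $f$ by $\a_{g_0}\circ f$, so $h_j\to e$) together with remainder estimates uniform in $h\in\isotropy$, which is what makes the rescaling limit come out as the linearization of $f$ alone. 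A related loose end: $df$ need not preserve the orthogonal splitting $T\Y_k\oplus N\Y_k$ since $f$ is not an isometry; the induced map on the quotient $T_{p}\N/T_p\Y$ (equivalently on the conormal $N^*_p\Y$, as in \eqref{DetCondition}) is what is well defined, and the slice projection $\proj$ is the device that realizes it concretely.
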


The following set up will be used in the proof of the theorem and again in Section~\ref{sLimit}. 

Let $\Y\in \Fix(\pmb f)$, pick some $p_0\in \Y$ and let $r>0$ be such that the geodesic ball in $\N$ with center $p_0$ and radius $2r$ is strictly geodesically convex. Let $N_{p_0}\Y$ be the subspace of $T_{p_0}\N$ orthogonal to $T_{p_0}\Y$, let $B\subset N_{p_0}\Y$ be the ball with center $0$ and radius $r$. Finally let $\exp_{p_0}:T_{p_0}\N\to \N$ be the exponential map and let $\X$ the image of $B$. Thus $\X$ is transverse to $\Y$ at $p_0$. We assume $r$ so small that $\Y\cap \X=\set{p_0}$. 

Let $\isotropy$ be the isotropy group of $p_0$, let $G_0$ be a compact connected subgroup of $G$ complementary to $\isotropy$, and let $D\subset G_0$ be a neighborhood of the identity element, a ball in some invariant metric, such that $\X\times D\ni (p,g)\mapsto \a_g(p)\in \N$ is a diffeomorphism onto its image. This image is an open set $\calU\subset \N$ foliated by copies of $D$. Let $\proj:\calU\to \X$ denote the canonical projection. If $\Y'$ is an orbit of $G$ then $\proj(\Y'\cap \calU) = \Y'\cap \X$. Slices such as $\X$ are invariant under the action of $\isotropy$, and moreover, if $p\in \X$ and $g\in G$ is such that $\a_g(p)\in \X$, then $g\in \isotropy$ (see \cite[~p.~64~ff.]{AlexandrinoBettiol2015}). Since $G_0$ is compact and transverse to $\isotropy$ at the identity, $G_0\cap \isotropy$ is finite, all intersections being transversal. 

There is $g_0$ such that $\a_{g_0}(f(p_0))=p_0$. With such $g_0$, the function $f_0=\a_{g_0}\circ f$ leaves $\Y$ pointwise fixed. Then we can find a positive number $r'<r$ such that $f_0(p)\in \calU$ if $p\in \X'=\exp_{p_0}(B')$ where $B'\subset N_{p_0}\Y$ is the ball of radius $r'$ centered at $0$.

\begin{proof}[Proof of Theorem \ref{DiscreteFixedPoints}]
We argue in the context of the preceding paragraphs, with the specified elements $\Y\in \Fix(\pmb f)$ and $p_0\in \Y$; without loss of generality assume already that $f$ itself leaves $\Y$ pointwise fixed. Suppose there is a sequence $\set{\Y_j}_{j=1}^\infty\subset \Fix(\pmb f)$ of distinct elements converging to $\Y$ in $\B$. We will contradict \eqref{WFCondition} by showing that the equation \eqref{DetCondition} has  nontrivial solutions.

Passing to a subsequence of $\set{\Y_j}$ we may assume that $\dist(\Y_j,\Y)<r'$  for each $j$ and then pick $p_j\in \Y_j\cap \X$; this is an element of $\X'$, hence $f(p_j)\in \Y_j\cap \calU$ and $\proj(f(p_j))\in \Y_j\cap \X$.  Consequently there is $g_j\in D$ such that $p_j=\a_{g_j}(\proj(f(p_j)))$. Necessarily $g_j\in \isotropy$.

Using that $\isotropy$ is compact we may, passing to subsequences, assume that $\set{g_j}$ converges to some $g_0$. Denote $h_j=g_jg_0^{-1}$; these are elements of $\isotropy$. The sequence $\set{h_j}$ converges to $e$, the identity element of $\isotropy$.  Replace $f$ by $\a_{g_0}f$. Since $g_0\in \isotropy$, this replacement does not affect the already made assumption that $f$ is the identity on $\Y$ and we now have
\begin{equation*}
h_j\in \isotropy,\ \a_{h_j}(\proj (f(p_j)))=p_j\text{ for all }j, \text{ and }h_j\to e\text{ as }j\to \infty.
\end{equation*}

Write $\mathfrak e$ for the diffeomorphism $\exp_{p_0}\big|_B:B\to \X$, define
\begin{gather*}
\tilde \a_h:B'\to B',\ \tilde \a_h=\mathfrak e^{-1}\circ \a_h\circ \mathfrak e\ \text{ for }h\in \isotropy,\\
 \tilde f:B'\to B,\ \tilde f=\mathfrak e^{-1}\circ \proj \circ f\circ  \mathfrak e.
\end{gather*}
Letting $v_j=\mathfrak e^{-1}(p_j)$  (an element of $B'$) we have
\begin{equation*}
\tilde \a_{h_j}(\tilde f(v_j))=v_j\text{ for all }j\text{ and }v_j\to 0.
\end{equation*}
The sequence with elements $\hat v_j=|v_j|^{-1}v_j$ again has a convergent subsequence. Again passing to subsequences, assume that it converges, let $\hat v$ be its limit. We will show that 
\begin{equation*}
\frac{1}{|v_j|}(\tilde \a_{h_j}(\tilde f(v_j))-v_j)\to \tilde f(\hat v)-\hat v.
\end{equation*}
Since the terms on the left hand side vanish for all $j$, $\tilde f(\hat v)=\hat v$, which by elementary linear algebra means that \eqref{DetCondition} has a nonzero solution. 

Both $\tilde f$ and the functions $\tilde \a_h$, $h\in \isotropy$ vanish at $0$, so their first order Taylor expansion with remainder is
\begin{equation*}
\tilde f(v)=\tilde f^{(1)}(v)+R_{\tilde f}(v),\quad \tilde \a_h(v)=\tilde a_h^{(1)}(v)+R_{\tilde a_h}(v)
\end{equation*}
with quadratic estimates of the remainders, uniform in $h$ in the case of $\tilde \a_h$. Using the linearity of $\tilde f^{(1)}$ and $\tilde a_h^{(1)}$ we obtain
\begin{equation*}
\tilde a_h(\tilde f(v))=\tilde a_h^{(1)}(\tilde f^{(1)}(v))+R_h(v)
\end{equation*}
with 
\begin{equation*}
R_h(v)=\tilde a_h^{(1)}(R_{\tilde f}(v))+R_{\tilde a_h}(\tilde f(v))=\Oh(|v|^2).
\end{equation*}
Thus 
\begin{equation*}
0=\frac{1}{|v_j|}(\tilde \a_{h_j}(\tilde f(v_j))-v_j)=(\tilde \a_{h_j}^{(1)}(\tilde f^{(1)}(\hat v_j))-\hat v_j)+ \frac{1}{|v_j|}R_{h_j}(v_j)
\end{equation*}
which implies, passing to the limit as $j\to \infty$, that $\tilde f^{(1)}(\hat v)-\hat v=0$.
\end{proof}


\section{Limits and wave front sets}\label{Limits}

We now return to the setup of Section \ref{L2Traces} and define specific operators $\Phi^q_s$ satisfying the properties in that section leading to \eqref{LefshetzAsL2trace}. We use Kotake's approach \cite{Kotake} using the heat kernel as mollifier which also has the advantage of providing a way of computing the limit under the assumption that \eqref{WFCondition} holds.

Since $P_q$, defined in \eqref{FiniteCohomology}, is a non-negative elliptic operator that commutes with $\Lie_\T=\nabla_\T$, the heat operator $e^{-sP_q}$ exists for $s\geq 0$ and commutes with $\Lie_\T$. Let $\Phi^q_s=\Phi^q\circ e^{-sP_q}$. The operator $\Phi^q_s$ is smoothing when $s> 0$ because $e^{-sP_q}$ is. This and the fact that no element $(\pmb \xi\oplus \pmb \xi')\in \WF(K_{\Phi^q})$ has $\pmb \xi'=0$ give that $K_{\Phi^q_s}$ is smooth. Thus \eqref{CommuteAndSmoothing} holds. Using the representation of $e^{-sP_q}$ in terms of projections on the joint eigenspaces of $P_q$ and $-\im\Lie_T$ one gets that also \eqref{sGeometricEndo} and \eqref{PhiAt0} are satisfied; for the latter observe that $e^{-s P_q}$ is the identity on $\Ha^q$. Thus all conclusions in Section \ref{L2Traces} hold. 

We now determine convergence as $s\to 0^+$. This will follow from \cite[Theorem 2.5.11$'$, p.~128]{Hor71} and a property of the heat kernel by way of writing the Schwartz kernel of the composition as a succession of pull-back and push-forward operations. 

For an arbitrary manifold $\Z$ and closed conic subset $W\subset T^*\Z\minus 0$, the space 
\begin{equation*}
C^{-\infty}_W(\Z)=\set{u\in C^{-\infty}(\Z):\WF(u)\subset W}
\end{equation*}
with the topology given by all the seminorms of $C^{-\infty}(\Z)$ together with all seminorms $u\mapsto a(Pu)$ where $P$ is a pseudodifferential operator which is microlocally smoothing on $W$ and $a$ is a seminorm of $C^\infty(\Z)$, is a complete topological vector space containing $C^\infty(\Z)$ as a dense subspace. With this topology, if $\X$ is a smooth manifold and $\iota:\X\to \Z$ is smooth and such that $\iota^*W\subset T^*\X$ contains no zero covector, then the restriction operator
\begin{equation*}
\iota^*:C^\infty(\Z)\to C^\infty(\X)
\end{equation*}
is continuous in the respective subspace topology and therefore extends to a continuous operator 
\begin{equation*}
\iota^*:C^{-\infty}_W(\Z)\to C^{-\infty}_{\iota^*W}(\X).
\end{equation*}
Also, for arbitrary manifolds $\Z$, $\Y$, if $\pi:\Z\to \Y$ is a submersion with compact fibers, then $\pi_*$ is well defined on distributions (ignoring densities) and for an arbitrary closed conic subset $W\subset T^*\Z$
\begin{equation*}
\pi_*:C^\infty_W(\Z)\to C^\infty_{\pi_*W}(\Y),\quad \pi_*(W)=\set{\pmb \eta\in T^*\Y:\exists\pmb \zeta\in W,\ \pi^*\pmb \eta=\pmb \zeta}
\end{equation*}
is continuous. 

Let $\pi_\rmL$, $\pi_\rmR$ be the left and right projections, 
\begin{equation*}
\N\times \N\xleftarrow{\pi_\rmL}(\N \times \N)\times(\N\times \N) \xrightarrow{\pi_\rmR}
\N\times\N,
\end{equation*}
$\iota_{\Delta_{\mathrm M}}:\N\times\N\times \N\to (\N \times \N)\times(\N\times \N)$ the middle diagonal inclusion map
\begin{equation*}
\iota_{\Delta_{\mathrm M}}(p,p',p'')=(p,p',p',p''),
\end{equation*}
and 
\begin{equation*}
\pi:\N\times\N\times\N\to \N \times \N
\end{equation*}
the projection $\pi(p,p',p'')=(p,p'')$. Let $W_\rmL,W_\rmR\subset T^*(\N\times \N)\minus 0$ be closed conic subsets, let 
\begin{equation*}
W_{\rmL\rmR}=\pi_\rmL^*W_\rmL\cup\pi_\rmR^*W_{\rmR}\cup (\pi_\rmL^*W_\rmL\times\pi_\rmR^*W_{\rmR}),
\end{equation*}
and suppose 
\begin{equation*}
\iota_{\Delta_{\mathrm M}}^*W_{\rmL\rmR}\subset T^*(\N\times\N\times \N)
\end{equation*}
contains no zero covector. Finally let 
\begin{equation*}
W=\pi_*(\iota_{\Delta_{\mathrm M}}^*W_{\rmL\rmR}).
\end{equation*}

\begin{lemma}
Under the above conditions, the map
\begin{multline*}
C^{-\infty}_{W_\rmL}(\N\times\N)\times C^{-\infty}_{W_\rmR}(\N\times\N)\ni (K_\rmL,K_\rmR)\\\mapsto \pi_*(\iota_{\Delta_{\mathrm M}}^*(\iota_{\rmL}^*K_\rmL\otimes \iota_{\rmR}^*K_\rmR))\in C^{-\infty}_W(\N\times\N)
\end{multline*}
is continuous.
\end{lemma}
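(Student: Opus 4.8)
The plan is to exhibit the indicated map as a composition of three operations, each of which is continuous in the wave-front-refined topologies recalled just above the statement, and to track the wave front sets through the composition so as to land in $C^{-\infty}_W(\N\times\N)$.

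First I would form the external tensor product $(K_\rmL,K_\rmR)\mapsto K_\rmL\otimes K_\rmR$, realized as $\pi_\rmL^*K_\rmL\cdot\pi_\rmR^*K_\rmR$. Since $\pi_\rmL$ and $\pi_\rmR$ are submersions, the pull-back maps on covectors are injective, so $\pi_\rmL^*W_\rmL$ and $\pi_\rmR^*W_\rmR$ contain no zero covector, and the recalled pull-back statement makes $\pi_\rmL^*$ and $\pi_\rmR^*$ continuous from $C^{-\infty}_{W_\rmL}(\N\times\N)$ and $C^{-\infty}_{W_\rmR}(\N\times\N)$ into $C^{-\infty}_{\pi_\rmL^*W_\rmL}\big((\N\times\N)\times(\N\times\N)\big)$ and $C^{-\infty}_{\pi_\rmR^*W_\rmR}\big((\N\times\N)\times(\N\times\N)\big)$. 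A covector of $\pi_\rmL^*W_\rmL$ has vanishing component on the right copy of $\N\times\N$ and a covector of $\pi_\rmR^*W_\rmR$ has vanishing component on the left copy, so none of the first is the opposite of one of the second; hence the product $\pi_\rmL^*K_\rmL\cdot\pi_\rmR^*K_\rmR$ is defined, and the corresponding bilinear multiplication map is continuous into $C^{-\infty}_{W_{\rmL\rmR}}\big((\N\times\N)\times(\N\times\N)\big)$, the set $W_{\rmL\rmR}$ being exactly the standard tensor-product wave front bound written in the statement (H\"ormander \cite{Hor71}).

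Next I would restrict to the middle diagonal: by hypothesis $\iota_{\Delta_{\mathrm M}}^*W_{\rmL\rmR}$ contains no zero covector, so the recalled pull-back statement gives that $\iota_{\Delta_{\mathrm M}}^*$ is continuous from $C^{-\infty}_{W_{\rmL\rmR}}\big((\N\times\N)\times(\N\times\N)\big)$ into $C^{-\infty}_{\iota_{\Delta_{\mathrm M}}^*W_{\rmL\rmR}}(\N\times\N\times\N)$. Finally, $\pi:\N\times\N\times\N\to\N\times\N$ is a submersion whose fibers are copies of the compact manifold $\N$, so the recalled push-forward statement makes $\pi_*$ continuous from $C^{-\infty}_{W'}(\N\times\N\times\N)$ into $C^{-\infty}_{\pi_*W'}(\N\times\N)$ for every closed conic $W'$; taking $W'=\iota_{\Delta_{\mathrm M}}^*W_{\rmL\rmR}$ and recalling $W=\pi_*W'$ gives the asserted target space. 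Composing the three continuous maps proves the lemma.

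The step I expect to be the main obstacle is the \emph{joint} continuity of the bilinear external-tensor-product (equivalently, multiplication) step: on the non-metrizable spaces $C^{-\infty}_W$ joint continuity of a bilinear map does not follow formally from separate continuity, so one must appeal to the precise form of the calculus of \cite{Hor71}. The remaining bookkeeping — that the wave front set of a pull-back, of a push-forward, and of a product is contained in, respectively, the pull-back, the push-forward, and the fiberwise sum of the wave front sets of the ingredients, together with the submersion and compact-fiber hypotheses — is exactly what was recorded above and is routine.
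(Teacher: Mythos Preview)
Your proposal is correct and is precisely the argument the paper has in mind: in fact the paper does not supply a proof of this lemma at all, merely stating it as a consequence of the calculus of \cite{Hor71} set up in the preceding paragraphs. Your decomposition into external tensor product (via the submersions $\pi_\rmL,\pi_\rmR$ and multiplication), restriction along $\iota_{\Delta_{\mathrm M}}$, and push-forward along $\pi$ is exactly the intended one, and your identification of the joint continuity of the bilinear product as the only nontrivial point is apt. (Note incidentally that the paper's displayed formula writes $\iota_\rmL^*,\iota_\rmR^*$ where it means $\pi_\rmL^*,\pi_\rmR^*$; you have silently corrected this.)
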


\begin{proposition}\label{ConvergenceViaHeat}
Let $W=\WF(K_{\Phi^q\circ \Av^q})$. If \eqref{WFCondition} holds, then 
\begin{equation*}
\iota_\Delta^*K_{\Phi^q\circ \Av^q}\in C^{-\infty}_{\iota_\Delta^*W}(\N; \iota_\Delta^* (E^q\boxtimes E^{q,*}))
\end{equation*}
is well defined. Furthermore
\begin{equation}\label{ThePullBack}
\iota_\Delta^* K_{\Phi^q\circ \Av^q} =\lim _{s\to 0^+}\iota_\Delta^*K_{\Phi^q_s\circ \Av^q}\text{ in }C^{-\infty}_{\iota_\Delta^*W}(\N;\iota_\Delta^*(E^q\boxtimes E^{q,*})).
\end{equation}
\end{proposition}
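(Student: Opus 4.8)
The plan is to exhibit $\iota_\Delta^*K_{\Phi^q_s\circ\Av^q}$ as the value at the heat kernel of a composition of continuous linear maps between spaces of the form $C^{-\infty}_W(\,\cdot\,)$, and then to pass to the limit using that $K_{e^{-sP_q}}$ converges, as $s\to0^+$, to the Schwartz kernel $K_{\Id^q}$ of the identity operator on $C^\infty(\N;E^q)$ in the appropriate such space.

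First I would note that $e^{-sP_q}$ commutes with $\Av^q$: both are diagonal for the joint eigenspace decomposition $L^2(\N;E^q)=\bigoplus_{\lambda,\tau}\Eigen^q_{\lambda,\tau}$ of $P_q$ and $-\im\Lie_\T$, since $e^{-sP_q}$ acts as $e^{-s\lambda}$ on $\Eigen^q_{\lambda,\tau}$ while, by Lemma~\ref{AvIsProjector}, $\Av^q$ acts as the identity on $\Eigen^q_{\lambda,0}$ and as $0$ on $\Eigen^q_{\lambda,\tau}$ for $\tau\ne0$. Hence $\Phi^q_s\circ\Av^q=\Phi^q\circ\Av^q\circ e^{-sP_q}$, so that $K_{\Phi^q_s\circ\Av^q}$ is the composition, in the sense of the Lemma preceding this Proposition, of the \emph{fixed} kernel $K:=K_{\Phi^q\circ\Av^q}$, for which $\WF K=W$, with the heat kernel $K_{e^{-sP_q}}$. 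I would then apply that Lemma with $K_\rmL=K$, $W_\rmL=W$ and $W_\rmR=N^*\Delta\minus 0$, noting that $\WF(K_{e^{-sP_q}})=\emptyset\subset W_\rmR$ for $s>0$ while $\WF(K_{\Id^q})=N^*\Delta\minus 0$. A short bookkeeping computation with the explicit forms of $\pi_\rmL^*W$, $\pi_\rmR^*(N^*\Delta)$ and their product shows that $\iota_{\Delta_{\mathrm M}}^*W_{\rmL\rmR}$ contains no zero covector — any covector of $W_{\rmL\rmR}$ pulling back to zero under $\iota_{\Delta_{\mathrm M}}$ is forced to be zero, since neither $W$ nor $N^*\Delta$ meets the zero section — and that $W':=\pi_*(\iota_{\Delta_{\mathrm M}}^*W_{\rmL\rmR})=W$, as expected since composing on the right with the kernel of an operator whose wave front set lies in $N^*\Delta$ does not enlarge the wave front set. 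The Lemma (applied in a local frame of $E^q\boxtimes E^{q,*}$, the wave front estimates being insensitive to the bundle) then gives a continuous linear map
\begin{equation*}
C^{-\infty}_{N^*\Delta}(\N\times\N;E^q\boxtimes E^{q,*})\to C^{-\infty}_{W}(\N\times\N;E^q\boxtimes E^{q,*})
\end{equation*}
sending the Schwartz kernel of an operator $B$ to that of $\Phi^q\circ\Av^q\circ B$; in particular $K_{e^{-sP_q}}\mapsto K_{\Phi^q_s\circ\Av^q}$ and $K_{\Id^q}\mapsto K_{\Phi^q\circ\Av^q}$.

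Next I would use that \eqref{WFCondition} is exactly the statement $W\cap N^*(\iota_\Delta)=\emptyset$ (here $N^*(\iota_\Delta)=N^*\Delta$, and $W$ has no zero covectors), so that $\iota_\Delta^*W$ contains no zero covector and the restriction statement recalled before the composition Lemma extends $\iota_\Delta^*$ to a continuous map
\begin{equation*}
\iota_\Delta^*:C^{-\infty}_{W}(\N\times\N;E^q\boxtimes E^{q,*})\to C^{-\infty}_{\iota_\Delta^*W}(\N;\iota_\Delta^*(E^q\boxtimes E^{q,*})).
\end{equation*}
In particular $\iota_\Delta^*K_{\Phi^q\circ\Av^q}$ is well defined, which is the first assertion. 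Composing the two continuous maps above and evaluating along the family $\{K_{e^{-sP_q}}\}_{s>0}$, the identity \eqref{ThePullBack} follows at once from the remaining input
\begin{equation*}
K_{e^{-sP_q}}\longrightarrow K_{\Id^q}\quad\text{in }C^{-\infty}_{N^*\Delta}(\N\times\N;E^q\boxtimes E^{q,*})\ \text{ as }s\to0^+.
\end{equation*}

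This last convergence is where the real work lies, and I expect it to be the main obstacle, the point being that it must hold in the topology of $C^{-\infty}_{N^*\Delta}$, not merely weakly. For the ordinary seminorms of $C^{-\infty}$ one pairs $K_{\Id^q}-K_{e^{-sP_q}}$ with a test section — equivalently, one computes $\Tr\big((\Id-e^{-sP_q})\Psi\big)$ for a smoothing operator $\Psi$ — and, factoring $\Psi$ through Hilbert--Schmidt operators, reduces to the elementary fact that $\Id-e^{-sP_q}\to0$ in the strong operator topology while remaining uniformly bounded, which forces this trace to $0$. For the microlocal-smoothing seminorms one precomposes with a pseudodifferential operator $Q$ whose wave front set is disjoint from $N^*\Delta$; then $QK_{\Id^q}$ and each $QK_{e^{-sP_q}}$ are smooth, and the same estimate applied after $Q$ — iterated so as to control every $C^\infty$ seminorm — yields convergence in $C^\infty$. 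Alternatively one may simply invoke the standard mollifier property of the heat semigroup used by Kotake~\cite{Kotake}. Granting this, the proof is complete.
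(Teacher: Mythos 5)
Your argument is correct and follows essentially the same route as the paper: both reduce \eqref{ThePullBack} to the convergence $K_{e^{-sP_q}}\to K_{\Id}$ in $C^{-\infty}_{N^*\Delta}$, fed through the continuity of the pull-back/push-forward composition lemma and of $\iota_\Delta^*$ on $C^{-\infty}_W$ under \eqref{WFCondition}. The only differences are cosmetic — you commute $e^{-sP_q}$ past $\Av^q$ so as to apply the composition lemma once rather than twice, and you supply details of the heat-kernel convergence that the paper simply asserts.
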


\begin{proof}
The first assertion is a direct application of H\"ormander's theorem cited above. The second, crucial for us, is also a consequence of that theorem via the just stated lemma, as follows. The fact that 
\begin{equation*}
\lim_{s\to 0^+}K_{e^{-sP_r}}= K_{\Id}\text{ in }C^{-\infty}_{\WF(K_\Id)}(\N\times\N;E^q\boxtimes E^{q,*})
\end{equation*}
implies 
\begin{equation*}
\lim_{s\to 0^+}K_{\Phi^q_s}= K_{\Phi^q}\text{ in }C^{-\infty}_{\WF(K_{\Phi^q})}(\N\times\N;E^q\boxtensor E^{q,*})
\end{equation*}
which in turn implies
\begin{equation*}
\lim_{s\to 0^+}K_{\Phi^q_s\circ \Av^q}= K_{\Phi^q\circ\Av^q}\text{ in }C^{-\infty}_{\WF(K_{\Phi^q\circ \Av^q})}(\N\times\N;E^q\boxtimes E^{q,*}).
\end{equation*}
This and the lemma give \eqref{ThePullBack}.
\end{proof}

Since 
\begin{equation*}
\sum_{q=0}^m (-1)^q\Tr(\Phi^q_s\circ\Av^q)
\end{equation*}
is independent of $s>0$ (see \eqref{LefshetzAsL2trace}) and each term converges as $s\to 0^+$ when \eqref{WFCondition} holds, we have
\begin{corollary}
If \eqref{WFCondition} holds, then 
\begin{equation*}
L_{f,\phi} = \sum_{q=0}^m (-1)^q \langle \iota_\Delta^* K_{\Phi^q\circ \Av^q},\calI_\tensor^q\rangle.
\end{equation*}
\end{corollary}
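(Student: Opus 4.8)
The plan is to combine the $s$-independent expression for $L_{f,\phi}$ as an alternating sum of $L^2$-traces with the convergence statement of Proposition~\ref{ConvergenceViaHeat}, using that the topology of $C^{-\infty}_{\iota_\Delta^*W}$ is finer than the weak distributional topology and hence makes pairing with a fixed smooth section continuous.

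First I would fix an arbitrary $s>0$ and assemble the chain of identities already established: by \eqref{LefshetzAsL2trace}, the proposition identifying $\Tr(\Phi^q_s|_{L^2(\B;\E^q)})$ with $\Tr(\Phi^q_s\circ\Av^q)$, and the computation \eqref{BeforeLimit},
\begin{align*}
L_{f,\phi}
&=\sum_{q=0}^m(-1)^q\Tr\big(\Phi^q_s\big|_{L^2(\B;\E^q)}\big)
=\sum_{q=0}^m(-1)^q\Tr(\Phi^q_s\circ\Av^q)\\
&=\sum_{q=0}^m(-1)^q\big\langle\iota_\Delta^*K_{\Phi^q_s\circ\Av^q},\calI_\tensor^q\big\rangle.
\end{align*}
In particular the last sum does not depend on $s$.

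Next I would let $s\to0^+$ in the last expression. By Proposition~\ref{ConvergenceViaHeat}, when \eqref{WFCondition} holds, $\iota_\Delta^*K_{\Phi^q_s\circ\Av^q}\to\iota_\Delta^*K_{\Phi^q\circ\Av^q}$ in $C^{-\infty}_{\iota_\Delta^*W}(\N;\iota_\Delta^*(E^q\boxtimes E^{q,*}))$, with $W=\WF(K_{\Phi^q\circ\Av^q})$. The key point to invoke is that, by construction, the topology of $C^{-\infty}_{\iota_\Delta^*W}$ contains all the seminorms of $C^{-\infty}(\N;\iota_\Delta^*(E^q\boxtimes E^{q,*}))$, so the linear functional given by pairing with the fixed smooth section $\calI_\tensor^q$ on the compact manifold $\N$ is continuous in it. Hence $\big\langle\iota_\Delta^*K_{\Phi^q_s\circ\Av^q},\calI_\tensor^q\big\rangle\to\big\langle\iota_\Delta^*K_{\Phi^q\circ\Av^q},\calI_\tensor^q\big\rangle$ for each $q$, and therefore the $s$-dependent sum converges to $\sum_{q=0}^m(-1)^q\big\langle\iota_\Delta^*K_{\Phi^q\circ\Av^q},\calI_\tensor^q\big\rangle$ as $s\to0^+$.

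Finally, since the left-hand side $L_{f,\phi}$ equals the $s$-dependent sum for every $s>0$, it equals its limit, which is the claimed right-hand side. I do not expect a real obstacle: all of the substantive work --- finite-dimensionality, the two trace identities, the wave-front computation of $\WF(K_{\Phi^q\circ\Av^q})$, and above all the heat-kernel limit in Proposition~\ref{ConvergenceViaHeat} --- is already done. The one subtlety to keep in mind is that the convergence must be read in the $C^{-\infty}_{\iota_\Delta^*W}$-topology, so that pairing against $\calI_\tensor^q$ remains legitimate in the limit, and that $W$ must indeed be the wave-front set for which \eqref{WFCondition} guarantees $\iota_\Delta^*W$ has no zero covectors.
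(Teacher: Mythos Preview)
Your proposal is correct and follows essentially the same approach as the paper: the paper observes that $\sum_{q=0}^m(-1)^q\Tr(\Phi^q_s\circ\Av^q)$ is independent of $s>0$ by \eqref{LefshetzAsL2trace} and that each term converges as $s\to0^+$ under \eqref{WFCondition}, yielding the corollary. Your version makes the continuity of the pairing in the $C^{-\infty}_{\iota_\Delta^*W}$ topology a bit more explicit, but the logic is identical.
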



\section{Approximating sequences}

We return to the general setting of Sections~\ref{ConnectionsGroupsAverage}, \ref{WaveFront}, and \ref{discreteness}. Let $\Omega_\rmR\to\N\times \N$ be the pull-back of $\Omega\to\N$, the bundle of $1$-densities of $\N$, through the right projection $\pi_\rmR:\N\times\N \to \N$. The Schwartz kernel of  $f^*:C^\infty(\N)\to C^\infty(\N)$ is a generalized section of $\Omega_{\rmR}$ supported along $\Gamma$, the graph of $f$,
\begin{equation*}
K_{f^*}\in C^{-\infty}(\N\times \N;\Omega_{\rmR}).
\end{equation*}
If $\phi_\tensor$ is a smooth section of $F\boxtimes F^*$ extending the section along $\Gamma$ corresponding to the map $\phi$ (see \eqref{phiAsTensor}), then 
\begin{equation*}
\phi_\tensor K_{f^*}
\end{equation*}
is the Schwartz kernel of $\Phi:C^\infty(\N;F)\to C^\infty(\N;F)$, see \eqref{DefOfPhiq}. So get a hold on the latter we only need to get a good handle on $K_{f^*}$ which we will get through the use of a simple mollifier, rather than a heat kernel. If in the resulting approximation the map $f$ is replaced by the identity map, we get (of course) an approximation to the Schwartz kernel of the identity map.

We will define a sequence of smooth sections of $\Omega_{\rmR}$ converging to $K_{f^*}$ (acting on functions). From this we will also easily get a sequence $\set{K_{\Phi\circ \Av,k}}_{k=1}^\infty$ of smooth sections of $F\boxtimes F^*\otimes \Omega_{\rmR}$ converging to $K_{\Phi\circ \Av}$. The important point is that under the hypothesis \eqref{WFCondition} also $\set{\iota_\Delta^*K_{\Phi\circ \Av,k}}_{k=1}^\infty$ will converge.

\medskip

Let $\gg$ be a $\T$-invariant Riemannian metric on $\N$. As in Section~\ref{discreteness} let $r>0$ be such that metric balls of radius $r$ are strictly convex. For each $p\in \N$ let $B_p\subset T_{(p,f(p))}(\N\times \N)$  be the ball of radius $r$ in $0_p\oplus T_{f(p)}\N$. The collection of these balls is a subbundle $B\to \Gamma$ of the part $T_\Gamma(\N\times \N)$ of $T(\N\times\N)$ over the graph of $f$.  Define
\begin{equation*}
\mathfrak e:B\to \N\times\N,\quad \mathfrak e(\pmb v)=\exp(\pmb v), \ \pmb v\in B.
\end{equation*}
Thus $\mathfrak e$ is a diffeomorphism onto some neighborhood $\calW$ of $\Gamma$. Let 
\begin{equation}\label{DefOfGamma}
\gamma:\calW\to B
\end{equation}
be the inverse. 

Denote by $\m_{\rmR}$ the section of $\Omega_{\rmR}$ determined by $\m$: $\m_\rmR=\pi_\rmR^*\m$. Let $\chi_0\in C_c^\infty(\R)$ be non-negative, supported on $(-r,r)$, positive near $0$ and let $\chi:B\to \R$ be given by $\chi(\pmb v)=\chi_0(|\pmb v|)$. 
Define $K_{f^*,k}$ as a section of $\Omega_{\rmR}\to \N\times\N$ by setting, for $k=1,2\dots$, 
\begin{equation*}
K_{f^*,k}(p,p')=k^n\chi(k\,\gamma(p,p'))c(\pi(\gamma(p,p')))\m_{\rmR}(p')
\end{equation*}
if $(p,p')\in \calW$ and $0$ if not, where $\pi:T_\Gamma(\N\times \N)\to \Gamma$ is the canonical projection. The function $c:\Gamma\to \R$, to be specified once we show the convergence of $K_{f^*,k}$, is smooth and positive.

\begin{lemma}\label{KfPullBack}
The sequence $\set{K_{f^*,k}}_{k=1}^\infty$ converges as a distribution.
\end{lemma}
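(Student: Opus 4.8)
The plan is to show that $\{K_{f^*,k}\}$ is a Cauchy sequence (equivalently, converges) when tested against an arbitrary $w\in C^\infty(\N\times\N)$, and to identify the limit as $c^{-1}$ times a constant times $K_{f^*}$ for an appropriate choice of $c$. First I would set up the computation locally: since $\gamma$ restricted to a fiber $B_p$ is just the inverse exponential map in the normal direction $0_p\oplus T_{f(p)}\N$, the pairing $\langle K_{f^*,k},w\rangle$ is an integral over $\calW$ of $k^n\chi_0(k|\gamma(p,p')|)\,c(\pi\gamma(p,p'))\,w(p,p')\,d\m_\rmR(p')$ against a base integration over $\Gamma\cong\N$. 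Fibering $\calW$ over $\Gamma$ via $\pi\circ\gamma$, the inner integral over each fiber $\{p\}\times(\text{ball of radius }r\text{ about }f(p))$ becomes, after the substitution $\pmb v=\gamma(p,p')$ and then rescaling $\pmb v\mapsto \pmb v/k$, an integral of the form $\int_{T_{f(p)}\N} \chi_0(|\pmb v|)\, w(\mathfrak e(\pmb v/k))\, J_p(\pmb v/k)\, d\pmb v$ where $J_p$ is the Jacobian factor relating Lebesgue measure on $T_{f(p)}\N$ to $\m_\rmR$ pulled back through $\exp$.

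Next I would take the limit $k\to\infty$ inside this integral by dominated convergence: $w(\mathfrak e(\pmb v/k))\to w(p,f(p))$ pointwise and $J_p(\pmb v/k)\to J_p(0)$, which is the density ratio at the point $f(p)$ itself, i.e. $1$ if we normalize $\exp$ at the origin to be volume-preserving to first order — more precisely $J_p(0)$ is a specific positive smooth function of $p$ coming from how $\m$ is written in normal coordinates at $f(p)$. Hence
\begin{equation*}
\lim_{k\to\infty}\langle K_{f^*,k},w\rangle=\int_\N c(p,f(p))\,J_p(0)\Big(\int_{|\pmb v|<r}\chi_0(|\pmb v|)\,d\pmb v\Big)\,w(p,f(p))\,d\m(p).
\end{equation*}
Writing $C_0=\int_{|\pmb v|<r}\chi_0(|\pmb v|)\,d\pmb v>0$ and comparing with the definition $\langle K_{f^*},w\rangle=\int_\N w(p,f(p))\,d\m(p)$, the sequence converges to $K_{f^*}$ precisely when $c$ is chosen so that $C_0\,c(p,f(p))\,J_p(0)\equiv 1$; since $J_p(0)$ is smooth and positive in $p$ this determines a legitimate smooth positive $c:\Gamma\to\R$, which is the promised specification of $c$. (If one only wants convergence to some nonzero multiple of $K_{f^*}$, any constant $c$ works; the normalization is cosmetic.)

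The main obstacle is purely bookkeeping: making the change of variables on each normal fiber rigorous and uniform in $p$, i.e. checking that the diffeomorphism $\mathfrak e$ and its Jacobian behave smoothly and are bounded with all derivatives on the compact set $\{\,|\pmb v|\le r\,\}\subset B$, so that dominated convergence applies with a $k$-independent integrable majorant (here $\chi_0(|\pmb v|)\sup_{\calW}|w|\sup|J|$ does the job since the $\pmb v$-domain is a fixed ball of finite volume and the $p$-integration is over the compact manifold $\N$). No genuine analytic difficulty arises because everything is an honest smooth compactly supported mollification transverse to $\Gamma$; the only point requiring care is that the rescaling is done in the normal directions to $\Gamma$ only, which is exactly why $K_{f^*,k}$ is built from $\gamma$ valued in the subbundle $B\subset T_\Gamma(\N\times\N)$ rather than from a full $2n$-dimensional mollifier. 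I would remark at the end that the same argument with $f=\id$ produces an approximating sequence $\{K_{\id,k}\}$ converging to $K_{\id}$, and that multiplying by the smooth section $\phi_\tensor$ and composing with the (smoothing-on-sections) averaging construction immediately yields the convergence of $\{K_{\Phi\circ\Av,k}\}$ asserted in the surrounding text.
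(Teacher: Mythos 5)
Your argument is correct and is essentially the paper's own proof: both substitute $\pmb v=\gamma(p,p')$ fiberwise over $\Gamma$, rescale by $k$, pass to the limit by dominated convergence, and then fix $c$ so as to normalize the resulting fiber integral (your factor $C_0\,J_p(0)$ is exactly the paper's bracketed quantity $\int_{B_{(p,f(p))}}\chi\,\m_0(f(p))\,d\eta$, which the paper checks is coordinate-independent before defining $c$). The only cosmetic difference is that the paper localizes $w$ to a product chart $\calU\times\calU'$ and carries out the computation in explicit coordinates $(y,\eta)$, whereas you phrase the same change of variables invariantly via the normal-fiber Jacobian.
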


\begin{proof}
Define $\Omega_\rmL$ in the way $\Omega_\rmR$ was defined, now using the left projection. If $w\in C^\infty(\N\times\N;\Omega_{\rmL})$ is arbitrary then $w K_{f^*,k}$ is a smooth density on $\N\times \N$. We show that
\begin{equation}\label{TestingId}
\int_{\N\times \N}w K_{f^*,k}
\end{equation}
converges. Let $p_0\in \N$ be arbitrary, $\calU'$ a neighborhood of $f(p_0)$ and $\calU$ one of $p_0$ chosen so that $f(\calU)\subset \calU'$. If $w\in C^\infty(\N\times\N;\Omega_{\rmL})$ is supported in $\calU\times \calU'$ and $w(p,p')=w_0(p,p')\m(p)$, then \eqref{TestingId} reads
\begin{equation}\label{AsAnIntegral}
\int_{\N\times \N}w_0(p,p')k^n\chi(k\,\gamma(p,p'))c(\pi(\gamma(p,p')))\,d\m(p)\,d\m(p').
\end{equation}
We will evaluate the limit as $k\to \infty$ by first introducing the change of variables $\pmb v=\gamma(p,p')$ and using local coordinates.

We choose the neighborhoods $\calU$, $\calU'$ so small that they are domains of local charts,  $x^1,\dots,x^n$ in the first, ${x'}^1,\dots,{x'}^n$ in the second. We get coordinates $(y,\eta)$ on the part of $0\oplus T\N$ over $\Gamma\cap(\calU\times \calU')$ by letting $y^j={x'}^j\circ \pi(\pmb v)$ and $(\eta^1,\dots,\eta^n)$ so that
\begin{equation*}
\pmb v=\sum \eta^i\frac{\partial}{\partial {x'}^i}\Big|_{y,f(y)}.
\end{equation*}
In these coordinates
\begin{equation*}
\mathfrak e(y,\eta)=\big (X(y,\eta),X'(y,\eta)\big ) = (y,f(y)+\eta+\Oh(|\eta|^2))
\end{equation*}
and so
\begin{equation*}
\gamma(x,x')=(Y(x),\Eta(x,x'))=\big(x,x'-f(x)+\Oh(|x'-f(x)|^2)\big)
\end{equation*}

Writing the integrand in \eqref{AsAnIntegral} in the coordinates $(x,x')$ using $\m_0(x)|dx|$ and $\m_0'(x')|dx'|$ for the densities and introducing the change of variables $x=X(y,\eta)$, $x'=X'(y,\eta)$, gives
\begin{equation*}
w_0(X(y,\eta),X'(y,\eta))k^n \chi(k\pmb v(\eta))c(\pi(\pmb v(\eta)))\m_0(X(y,\eta))\m_0'(X'(y,\eta))\,dy\, d\eta
\end{equation*}
where $\pmb v=\sum \eta^i\partial_{{x'}^i}$. The further change of variables $\eta=k^{-1}\tilde \eta$ followed by integration yields
\begin{equation*}
\lim_{k\to \infty}\int_{\N\times\N} wK_{f^*,k}=\\
\int_{B}w_0(y,f(y))\chi(\pmb v(\eta))c(y,f(y))\m_0(y)\m_0(f(y))\,dy\, d\eta.
\end{equation*}
We reorganize the integrand on the right observing that $w_0(x,f(y))\m_0(y)\,dy=\iota_{\Gamma}^*w$ to get
\begin{equation}\label{TheLimitTestingId}
\int_\N c(p,f(p))\Big[\int_{B_{(p,f(p))}}\chi(\pmb v(\eta))\m_0(f(p))\,d\eta\Big]\,\iota_{\Gamma}^*w
\end{equation}
Thus \eqref{TestingId} converges as $k\to \infty$.
\end{proof}

The density $\m_0\,d\eta$ in \eqref{TheLimitTestingId} is independent of the coordinates used. Keeping the notation of the proof of the lemma,  is $\tilde x'{}^1,\dots \tilde x'{}^n$  is another coordinate system near $f(p_0)$, then
\begin{equation*}
\pmb v=\sum\eta^i \frac{\partial }{\partial {x'}^i} = \sum_j \tilde \eta^j \frac{\partial }{\partial {\tilde x'}{}^j}
\end{equation*}
with
\begin{equation*}
\tilde \eta_j=\sum_i\eta_i \frac{\partial {x'}^i}{\partial \tilde x'{}^j},
\end{equation*}
therefore 
\begin{equation*}
\m_0(f(p))\,d\eta=\m_0(f(p))\big|\det\frac{\partial \tilde x'}{\partial x'}\big|\,d\tilde \eta
\end{equation*}
But $\m=\tilde \m_0\, d\tilde x'$ in the new coordinates so $\m_0(f(p))\,d\eta=\tilde \m_0(f(p))\,d\tilde \eta$. Consequently the expression in brackets in \eqref{TheLimitTestingId} is independent of coordinates. By choice it is positive everywhere so we can define $c:\Gamma\to \R$ in such a way that 
\begin{equation*}
\int_{\N\times \N} w K_{f^*,k}\to \int_\N \iota_{\Gamma}^*w\quad\text{ as }k\to\infty. 
\end{equation*}
For later use we note that the expression in brackets in \eqref{TheLimitTestingId} is constant on orbits of $G$, so
\begin{equation}\label{cIsConstantOnY}
c(\a_g(p),f(a_g(p)))=c(p,f(p)).
\end{equation}

\begin{proposition}\label{ApproximatePhi}
The sequence 
\begin{equation*}
\set{K_{\Phi,k}}_{k=1}^\infty,\quad  K_{\Phi,k}=K_{f^*,k}\phi_\tensor.
\end{equation*}
converges to $K_{\Phi}$. 
\end{proposition}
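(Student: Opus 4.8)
The plan is to deduce this directly from the scalar convergence already proved in Lemma~\ref{KfPullBack}, using that multiplication by the fixed smooth bundle morphism $\phi_\tensor$ is a continuous operation. For each finite $k$ the density-valued section $K_{f^*,k}$ is smooth, so $K_{\Phi,k}=K_{f^*,k}\phi_\tensor$ is a bona fide smooth section of $F\boxtimes F^*\otimes\Omega_{\rmR}$ and no regularity question arises in forming the product; the only thing to establish is convergence in the distributional topology, i.e.\ convergence of the pairings against arbitrary test sections.

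First I would take $w\in C^\infty(\N\times\N;F^*\boxtimes F\otimes\Omega_{\rmL})$, with $\Omega_{\rmL}$ the left analogue of $\Omega_{\rmR}$ as in the proof of Lemma~\ref{KfPullBack}, and note that the pointwise contraction $\langle\phi_\tensor,w\rangle$ — pairing the $F\boxtimes F^*$ factor of $\phi_\tensor$ against the $F^*\boxtimes F$ factor of $w$ — is a smooth \emph{scalar} section of $\Omega_{\rmL}$, that is, an ordinary smooth left density on $\N\times\N$. By the definition of $K_{\Phi,k}$ as the product of the density-valued $K_{f^*,k}$ with $\phi_\tensor$,
\begin{equation*}
\int_{\N\times\N} w\,K_{\Phi,k}=\int_{\N\times\N}\langle\phi_\tensor,w\rangle\,K_{f^*,k},
\end{equation*}
and Lemma~\ref{KfPullBack}, applied with the scalar test density $\langle\phi_\tensor,w\rangle$ (a partition of unity reducing to the local computation carried out in its proof, exactly as there), gives that the right-hand side converges as $k\to\infty$, with limit $\int_\N\iota_\Gamma^*\langle\phi_\tensor,w\rangle$. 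Since $K_\Phi=\phi_\tensor K_{f^*}$ — recalled at the start of this section and established in Section~\ref{WaveFront} — this limit is precisely $\langle K_\Phi,w\rangle$. As $w$ is arbitrary, $K_{\Phi,k}\to K_\Phi$ in $C^{-\infty}(\N\times\N;F\boxtimes F^*\otimes\Omega_{\rmR})$.

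I do not expect a genuine obstacle here: the entire analytic content sits in Lemma~\ref{KfPullBack}, and what remains is the elementary fact that multiplication by a fixed smooth bundle morphism is sequentially continuous on $C^{-\infty}$, so that $K_{\Phi,k}=\phi_\tensor K_{f^*,k}\to\phi_\tensor K_{f^*}=K_\Phi$. The one point worth stating carefully is the matching of bundles — that after contracting a $F^*\boxtimes F$-valued test section against $\phi_\tensor$ one is again testing a scalar density, so that Lemma~\ref{KfPullBack} applies verbatim rather than in some bundle-valued reformulation; the companion observation that replacing $f$ by the identity yields in the same way an approximating sequence for $K_{\Id}$ (and hence for $\Av$ and $\Phi\circ\Av$) is then immediate.
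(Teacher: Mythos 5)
Your proposal is correct and follows essentially the same route as the paper: move $\phi_\tensor$ onto the test section to form the scalar density $\langle\phi_\tensor,w\rangle$, invoke the convergence of $K_{f^*,k}$ from Lemma~\ref{KfPullBack}, and identify the limit with $\langle K_\Phi,w\rangle$ via $K_\Phi=\phi_\tensor K_{f^*}$. The only cosmetic difference is that the paper re-derives this last identification by evaluating on split test sections $v_0\otimes u$, whereas you cite the formula already established in Section~\ref{WaveFront}; the content is identical.
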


\begin{proof}
Since $\phi_\tensor$ is a smooth section of $F\boxtimes F^*$, so is $K_{f^*,k}\phi_\tensor$. If $w_0$ is a smooth section of $F^*\boxtimes F$, then $(p,p')\mapsto \langle \phi_\tensor(p,p'), w_0(p,p')\rangle$ is a smooth function and 
\begin{equation*}
\langle \phi_\tensor K_{f^*,k},w_0\m_\rmL\rangle = \big\langle K_{f^*,k},\langle \phi_\tensor, w\rangle \m_\rmL\big\rangle \to  \int_\N \big\langle \phi_\tensor(p,f(p)), w_0(p,f(p))\big\rangle\,d\m(p)
\end{equation*}
as $k\to \infty$. In particular, if $w_0=v_0\otimes u$ with $v_0\in C^\infty(\N;F^*)$ and $u\in C^\infty(\N;F)$, then the definition of $\phi_\tensor$ (see \eqref{phiAsTensor}) gives
\begin{equation*}
 \langle \phi_\tensor(p,f(p)), w_0(p,f(p))\rangle =  \langle v_0(p),\phi(p,u(f(p)))\rangle= \langle v_0(p),\Phi(u)(p)\rangle,
\end{equation*}
therefore $\langle \phi_\tensor K_{f^*,k},w\rangle\to \langle v,\Phi(u)\rangle=\langle K_\Phi,v\boxtimes u\rangle$. Thus $K_{f^*,k}\phi_\tensor$ converges to $K_{\Phi}$ as $k\to \infty$.
\end{proof}

Let $K_{\Phi,k,0}=K_{f^*,k,0}\phi_\tensor$, define $K_{\Phi\circ \Av,k,0}\in C^\infty(\N\times \N;F\boxtimes F^*)$ by
\begin{equation*}
K_{\Phi\circ \Av,k,0}(p,p')= \int_{\hat G} (\Id\boxtimes \A_{\hat g}^*)K_{\Phi,k,0}\big(p,\a_{\wp(\hat g)}(p')\big)\,d\hat \mu(\hat g).
\end{equation*}
and $K_{\Phi\circ \Av,k}=K_{\Phi\circ \Av,k,0}\m_R$.

\begin{proposition} 
We have
\begin{equation*}
\op(K_{\Phi\circ\Av,k})=\op(\phi_\tensor K_{f^*,k})\circ \Av.
\end{equation*}
The sequence $\set{K_{f^*\circ\Av,k}}_{k=1}^\infty$ converges to $K_{\Phi\circ \Av}$.
\end{proposition}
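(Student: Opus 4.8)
The plan is to prove the two assertions separately, the first being purely algebraic and the second a matter of pushing the convergence from Proposition~\ref{ApproximatePhi} through the (smoothing, hence continuity-preserving) averaging operation.

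\textbf{Step 1: the identity $\op(K_{\Phi\circ\Av,k})=\op(\phi_\tensor K_{f^*,k})\circ\Av$.} First I would unwind the definition of $K_{\Phi\circ\Av,k}$. By construction $K_{\Phi\circ\Av,k}=K_{\Phi\circ\Av,k,0}\,\m_\rmR$ with
\begin{equation*}
K_{\Phi\circ\Av,k,0}(p,p')=\int_{\hat G}(\Id\boxtimes\A_{\hat g}^*)K_{\Phi,k,0}\bigl(p,\a_{\wp(\hat g)}(p')\bigr)\,d\hat\mu(\hat g).
\end{equation*}
Apply $\op(K_{\Phi\circ\Av,k})$ to $u\in C^\infty(\N;F)$: the value at $p$ is $\int_\N K_{\Phi\circ\Av,k,0}(p,p')\bigl(u(p')\bigr)\,d\m(p')$. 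Substituting the integral representation and interchanging the (absolutely convergent, compact-domain) integrals over $\N$ and $\hat G$, then for each fixed $\hat g$ changing variables $p'\mapsto \a_{\wp(\hat g^{-1})}(p')$ (under which $\m$ is invariant, since $\m$ is $\a_t$-invariant and $\wp$ surjective so $\a_{\wp(\hat g)}$ lies in the closure of $\{\a_t\}$), one recognizes the inner integral as $\int_\N K_{\Phi,k,0}(p,p')\bigl(\A_{\hat g}^*{}^{-1}\! u(\a_{\wp(\hat g^{-1})}(p'))\bigr)\,d\m(p')$; but $(\A_{\hat g}^*)^{-1}=\A_{\hat g^{-1}}^{*\,*}$ pairs against $u$ to give precisely $\A_{\hat g}(u(\a_{\wp(\hat g^{-1})}p'))$ — which is the integrand of $\Av$ in \eqref{defOfAverage}. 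Hence the $\hat g$-integral reproduces $\op(\phi_\tensor K_{f^*,k})(\Av\, u)$, giving the claimed operator identity. This is routine once the bookkeeping of which bundle map is dual to which is done carefully.

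\textbf{Step 2: convergence of $\{K_{\Phi\circ\Av,k}\}$.} By Step~1, $\op(K_{\Phi\circ\Av,k})=\op(\phi_\tensor K_{f^*,k})\circ\Av$, and $\op(\phi_\tensor K_{f^*,k})=\op(K_{\Phi,k})$ converges (as operators, i.e. in the weak/distributional sense testing against $v\boxtimes u$) to $\op(K_\Phi)=\Phi$ by Proposition~\ref{ApproximatePhi}. Since $\Av:C^\infty(\N;F)\to C^\infty(\N;F)$ is a fixed continuous linear map, post-composition $\Phi_k\mapsto \Phi_k\circ\Av$ is continuous for the relevant topologies, so $\op(K_{\Phi\circ\Av,k})=\op(K_{\Phi,k})\circ\Av$ converges to $\Phi\circ\Av$. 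Concretely: for $v\in C^\infty(\N;F^*)$, $u\in C^\infty(\N;F)$ one has $\langle K_{\Phi\circ\Av,k},v\boxtimes u\rangle=\langle v,\op(K_{\Phi,k})(\Av\,u)\rangle\to\langle v,\Phi(\Av\,u)\rangle=\langle K_{\Phi\circ\Av},v\boxtimes u\rangle$, where in the last equality I use that $\Av$ is a projection onto $C^\infty(\B;\F)$ (the Lemma after \eqref{defOfAverage}, sharpened in Lemma~\ref{AvIsProjector}) so that $\Phi\circ\Av$ indeed has Schwartz kernel $K_{\Phi\circ\Av}$. Since distributions of the form $v\boxtimes u$ span a dense subspace of test sections of $F^*\boxtimes F$ and the $K_{\Phi\circ\Av,k}$ are uniformly bounded on compacta (being continuous images of the bounded family $K_{\Phi,k}$), convergence extends to all test sections.

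\textbf{Anticipated obstacle.} The only genuinely delicate point is the change-of-variables/duality manipulation in Step~1: one must check that $\m$ is invariant under all $\a_{\wp(\hat g)}$ (not merely the one-parameter subgroup), which follows because $\hat g\mapsto \a_{\wp(\hat g)}$ has image in $G=\overline{\{\a_t\}}$ and $\m$-invariance is a closed condition; and one must track carefully that the dual morphism $\A_{\hat g}^*$ appearing in the definition of $K_{\Phi\circ\Av,k,0}$ recombines, after the substitution, into exactly the morphism $\A_{\hat g}$ of \eqref{defOfAverage} rather than $\A_{\hat g^{-1}}$ or its adjoint. Everything else is a transcription of Proposition~\ref{ApproximatePhi} through a continuous linear map.
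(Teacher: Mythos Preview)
Your proposal is correct and follows essentially the same path as the paper's proof: unwind the definition of $K_{\Phi\circ\Av,k}$, change variables using the $G$-invariance of $\m$, recognize the averaging operator, and then derive convergence from Proposition~\ref{ApproximatePhi}. The only cosmetic difference is that the paper carries out the Step~1 computation by testing against a general section $w_0\in C^\infty(\N\times\N;F^*\boxtimes F)$ (rather than applying the operator to $u$ and pairing with $v$ afterwards), which lets the second assertion follow for arbitrary test sections immediately from Proposition~\ref{ApproximatePhi}, avoiding your density-plus-boundedness step.
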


\begin{proof}
Let $w_0\in C^\infty(\N\times \N;F^*\boxtimes F)$, $w=w_0\,d\m_\rmL$. Then 
\begin{align*}
\langle &K_{\Phi\circ\Av,k},w\rangle \\
&= \int_{\N\times \N}\int_{\hat G}\big\langle (\Id\boxtimes \A_{\hat g}^*)K_{\Phi,k,0}\big(p,\a_{\wp(\hat g)}(p')\big),w_0(p,p')\big\rangle\,d\hat \mu(\hat g)\,d\m(p)\,d\m_(p')\\
&= \int_{\N\times \N}\int_{\hat G}\big\langle K_{\Phi,k,0}\big(p,\a_{\wp(\hat g)}(p')\big),(\Id\boxtimes \A_{\hat g})w_0(p,p')\big\rangle\,d\hat \mu(\hat g)\,d\m(p)\,d\m_(p')\\
&= \int_{\N\times \N}\int_{\hat G}\big\langle K_{\Phi,k,0}\big(p,p'\big),(\Id\boxtimes \A_{\hat g})w_0(p,\a_{\wp(\hat g^{-1})}p')\big\rangle\,d\hat \mu(\hat g)\,d\m(p)\,d\m_(p')\\
&= \int_{\N\times \N}\big\langle K_{\Phi,k,0}\big(p,p'\big),\int_{\hat G}(\Id\boxtimes \A_{\hat g})w_0(p,\a_{\wp(\hat g^{-1})}p')\,d\hat \mu(\hat g)\big\rangle\,d\m(p)\,d\m_(p').
\end{align*}
If $w_0=v_0\otimes u$, $v_0\in C^\infty(\N;F^*)$ and $u\in C^\infty(\N;F)$, then 
\begin{equation*}
\int_{\hat G}(\Id\boxtimes \A_{\hat g})w_0(p,\a_{\wp(\hat g^{-1})}p')\,d\hat \mu(\hat g) = v_0(p)\otimes \Av(u)(p')
\end{equation*}
hence
\begin{align*}
\langle K_{\Phi\circ\Av,k},w\rangle &= \int_{\N\times \N}\big\langle K_{\Phi,k,0}\big(p,p'\big),v_0(p)\otimes \Av(u)(p')\rangle \,d\m(p)\,d\m(p')\\
&=\langle v, \op(K_{\Phi,k})(\Av(u))\rangle
\end{align*}
with $v=v_0\,\m_{\rmL}$. This proves the first assertion. The second is immediate from this and the previous proposition.
\end{proof}

Note for later use that if $(p,p')\in \supp K_{\Phi\circ\Av,k}$, then there is $\hat g\in \hat G$ such that $K_{f^*,k}(p,\a_{\wp(\hat g)}(p'))\ne0$, i.e. $d(p,\a_{\wp(\hat g)}(p'))<r/k$. 

\medskip
Let $W=\WF(K_{f^*})$, see \eqref{WFofPhiAv}. From its definition we see that $\WF(K_{\Phi,k})=\WF(K_{f^*})$.
\begin{proposition}
The sequence $\set{K_{f^*,k}}_{k=1}^\infty$ converges to $K_{f^*}$ in $C^{-\infty}_{N^*\Gamma}(\N\times \N;\Omega_R)$. Consequently 
\begin{equation*}
K_{\Phi\circ \Av,k}\to K_{\Phi\circ \Av}\text{ in }C^{-\infty}_{W}(\N\times \N;(F\boxtimes F^*)\otimes \Omega_\rmR).
\end{equation*}
as $k\to \infty$.
\end{proposition}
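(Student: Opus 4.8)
The plan is to upgrade the plain-distributional convergence of $\set{K_{f^*,k}}$ established in Lemma~\ref{KfPullBack} to convergence in the finer topology of $C^{-\infty}_{N^*\Gamma}(\N\times\N;\Omega_\rmR)$, and then to transport this through the averaging and twisting operations exactly as was done with the heat-kernel mollifier in Proposition~\ref{ConvergenceViaHeat}. Recall from the description of the $C^{-\infty}_W$ topology that convergence there amounts to two things: plain weak-$*$ convergence in $C^{-\infty}$ (which is Lemma~\ref{KfPullBack}), together with convergence of $P K_{f^*,k}$ in $C^\infty$ for every pseudodifferential operator $P$ that is microlocally smoothing on $N^*\Gamma$, i.e. with $\WF'(P)\cap N^*\Gamma=\emptyset$. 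So the real content is the second requirement.

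First I would record that $\WF(K_{f^*,k})\subset N^*\Gamma$ uniformly in $k$. This is essentially visible from the construction: $K_{f^*,k}(p,p')=k^n\chi(k\gamma(p,p'))c(\pi(\gamma(p,p')))\m_\rmR(p')$ is, in the fibered coordinates $(y,\eta)$ of the proof of Lemma~\ref{KfPullBack} adapted to the tubular neighborhood $\calW$ of $\Gamma$, a function of $y$ times $k^n\chi_0(k|\eta|)$ in the normal variable; its partial Fourier transform in $\eta$ is $\hat\chi_0(\xi/k)$ up to smooth factors, which is rapidly decreasing in $\xi$ for each $k$, and more importantly decays rapidly in any cone $|\xi|\le C$ uniformly as $k\to\infty$. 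Hence for any $P$ with $\WF'(P)$ disjoint from the conormal directions, $PK_{f^*,k}$ is given by an oscillatory integral whose amplitude is uniformly (in $k$) rapidly decreasing on the relevant support, so $PK_{f^*,k}$ stays in a bounded subset of $C^\infty$ and in fact converges in $C^\infty$ (its limit being $PK_{f^*}$, which is smooth since $\WF(K_{f^*})=N^*\Gamma$ is disjoint from $\WF'(P)$). The cleanest way to package this is to note that $K_{f^*,k}$ is a conormal distribution with respect to $\Gamma$, with symbols lying in a bounded subset of the relevant symbol class as $k$ varies, and that $K_{f^*,k}\to K_{f^*}$ both weakly and in the symbol topology; convergence in $C^{-\infty}_{N^*\Gamma}$ is then immediate from the standard fact that on the space of conormal distributions to a fixed submanifold the $C^{-\infty}_{N^*\Gamma}$-topology is induced by weak convergence plus boundedness of symbols.

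Once $K_{f^*,k}\to K_{f^*}$ in $C^{-\infty}_{N^*\Gamma}(\N\times\N;\Omega_\rmR)$ is in hand, the rest is formal and parallels Proposition~\ref{ConvergenceViaHeat}. Multiplication by the smooth section $\phi_\tensor$ is continuous on $C^{-\infty}_{N^*\Gamma}$, giving $K_{\Phi,k}=\phi_\tensor K_{f^*,k}\to K_{\Phi}$ in $C^{-\infty}_{W}$ with $W=\WF(K_{f^*})=N^*\Gamma$ (in the bundle-valued sense; note $\WF(K_{\Phi,k})=\WF(K_{f^*})$ by the remark preceding the proposition). Next, the passage to $K_{\Phi\circ\Av,k}$ is, by the preceding proposition, nothing but composition with the fixed smoothing-in-the-relevant-directions averaging operator $\Av$; concretely it is realized, exactly as in Section~\ref{Limits}, as a sequence of pull-backs by $\hat\rho$, $\hat\pi$ and the diagonal inclusions followed by a push-forward along a submersion with compact fibers, and each of these operations is continuous on the appropriate $C^{-\infty}_{(\cdot)}$ spaces by H\"ormander's theorem. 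Tracking the wave front sets through these operations reproduces the formula \eqref{WFofPhiAv} for $W=\WF(K_{\Phi\circ\Av})$, and continuity of the composite map gives $K_{\Phi\circ\Av,k}\to K_{\Phi\circ\Av}$ in $C^{-\infty}_{W}(\N\times\N;(F\boxtimes F^*)\otimes\Omega_\rmR)$.

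The main obstacle is the uniform-in-$k$ wave front estimate in the first step: one must check that the scaling $\eta\mapsto k^{-1}\eta$ built into $K_{f^*,k}$, while producing a blow-up of size $k^n$ pointwise, produces no blow-up and in fact decay in the non-conormal codirections, so that the symbols of $K_{f^*,k}$ form a bounded family and converge in the symbol topology rather than merely weakly. Everything downstream (multiplication by $\phi_\tensor$, the averaging composition, and the wave front bookkeeping) is a routine application of continuity of pull-back/push-forward on the $C^{-\infty}_W$ spaces, already used verbatim in Proposition~\ref{ConvergenceViaHeat}, and the identification of the limiting wave front set with $W=\WF(K_{\Phi\circ\Av})$ was computed in Section~\ref{WaveFront}.
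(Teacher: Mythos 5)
Your proposal is correct and follows essentially the same route as the paper: the paper likewise localizes to the fibered coordinates $(y,\eta)$ of Lemma~\ref{KfPullBack}, computes the Fourier transform of $K_{f^*,k}w$ to get the uniform bound $C_N(1+|\upsilon|+|\theta|/k)^{-N}$, hence uniform rapid decay in any closed cone disjoint from $N^*\Gamma=\set{\upsilon=0}$, and then deduces the second statement by the same pull-back/push-forward continuity arguments as in Proposition~\ref{ConvergenceViaHeat}. Your alternative packaging via conormal distributions with uniformly bounded symbols is just a reformulation of the same key estimate, which you correctly identify as the only nontrivial point.
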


\begin{proof}
The second statement will follow from the first by the same arguments as in Proposition \ref{ConvergenceViaHeat}. 

Working in coordinates and with notation in the proof of Lemma \ref{KfPullBack}, let $w$ be smooth supported in $\calU\times \calU'$. The Fourier transform $(K_{f^*,k} w)\hat{\ }(\upsilon,\theta)$ in the coordinates $(y,\eta)$ is an expression of the form
\begin{equation*}
\int \phi(y,\eta)k^n\chi(k\eta) e^{-\im(y\cdot \upsilon+\eta\cdot \theta)}\,dy\,d\eta
=\int \phi(y,\eta/k)\chi(\eta) e^{-\im(y\cdot \upsilon+\eta\cdot \theta/k)}\,dy\,d\eta
\end{equation*}
where $\phi$ incorporates unessential details from $w$ and $c$ except that it is smooth and compactly supported. We thus have the bound
\begin{equation*}
C_N(1+|\upsilon|+|\theta|/k)^{-N}
\end{equation*}
for any $N$ and some $C$, uniformly in $k$. In the region $|\theta|<\kappa|\upsilon|$, $\kappa>0$, this is equivalent to the bound
\begin{equation*}
C_{N,\kappa}(1+|\upsilon|+|\theta|)^{-N}.
\end{equation*}
In the coordinates we are using, the conormal bundle of $\Gamma$ is  $\set{\upsilon=0}$, the conormal bundle of the zero section of $N\Gamma$. Thus we have rapid decrease and uniform convergence in the complement of any closed cone disjoint from $N^*\set{\eta=0}$, which implies that the convergence takes place in $C^{-\infty}_{N^*\Gamma}(\N\times\N)$.
\end{proof}

\begin{corollary} 
If \eqref{WFCondition} holds, then the sequence $\set{\iota_\Delta^* K_{\Phi\circ \Av,k,0}}_{k=1}^\infty$ converges in $C^{-\infty}(\N;F\otimes F^*)$ as $k\to \infty$.
\end{corollary}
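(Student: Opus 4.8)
The plan is to obtain this corollary as a direct consequence of the preceding proposition, combined with the continuity of pull-back by $\iota_\Delta$ on the spaces $C^{-\infty}_W$, exactly as was done for the heat-kernel regularization in Proposition \ref{ConvergenceViaHeat}.

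First I would record, using the preceding proposition, that with $W=\WF(K_{\Phi\circ\Av})$ (described explicitly in \eqref{WFofPhiAv}) one has $K_{\Phi\circ\Av,k}\to K_{\Phi\circ\Av}$ in $C^{-\infty}_W\big(\N\times\N;(F\boxtimes F^*)\otimes\Omega_\rmR\big)$. Since $\m_\rmR=\pi_\rmR^*\m$ is a nowhere vanishing smooth section of $\Omega_\rmR$, multiplication by it is a topological isomorphism preserving wave front sets, so this is equivalent to the convergence $K_{\Phi\circ\Av,k,0}\to K_{\Phi\circ\Av,0}$ in $C^{-\infty}_W(\N\times\N;F\boxtimes F^*)$, where $K_{\Phi\circ\Av,0}=K_{\Phi\circ\Av}/\m_\rmR$ is the density-stripped Schwartz kernel.

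Next I would observe that \eqref{WFCondition} says precisely that $W$ avoids the normal set $N^*(\iota_\Delta)$ of the diagonal embedding, which is the hypothesis of H\"ormander's pull-back theorem in the form recalled at the beginning of Section \ref{Limits}. Hence $\iota_\Delta^*W\subset T^*\N$ contains no zero covector, the complete topological vector space $C^{-\infty}_{\iota_\Delta^*W}\big(\N;\iota_\Delta^*(F\boxtimes F^*)\big)$ is defined, and the restriction map
\begin{equation*}
\iota_\Delta^*:C^{-\infty}_W(\N\times\N;F\boxtimes F^*)\to C^{-\infty}_{\iota_\Delta^*W}\big(\N;\iota_\Delta^*(F\boxtimes F^*)\big)
\end{equation*}
is continuous; the bundle-valued version reduces to the scalar statement by working in a local frame of the smooth bundle $F\boxtimes F^*$. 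Since each $K_{\Phi\circ\Av,k,0}$ is smooth and $C^\infty$ sits in every such space, continuity yields $\iota_\Delta^*K_{\Phi\circ\Av,k,0}\to\iota_\Delta^*K_{\Phi\circ\Av,0}$ in $C^{-\infty}_{\iota_\Delta^*W}$, a fortiori in $C^{-\infty}$. The canonical identification $\iota_\Delta^*(F\boxtimes F^*)\cong F\otimes F^*$ (the fiber over $p$ being $F_p\otimes F_p^*$) then gives the asserted convergence in $C^{-\infty}(\N;F\otimes F^*)$.

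I expect essentially no obstacle: all the analytic content — the explicit description \eqref{WFofPhiAv} of $\WF(K_{\Phi\circ\Av})$, the transversality reformulation of \eqref{WFCondition} in Proposition \ref{WFvsDet}, and the $C^{-\infty}_W$-convergence of the mollified kernels — is already in place. The only points that require a little care are the bookkeeping of the density factors, the identification $\iota_\Delta^*(F\boxtimes F^*)\cong F\otimes F^*$, and checking that H\"ormander's pull-back continuity, stated for scalar distributions, transfers verbatim to sections of the smooth bundle $F\boxtimes F^*$; all of this is routine and can be verified in a local trivialization.
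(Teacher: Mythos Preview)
Your proposal is correct and is exactly the argument the paper has in mind: the corollary is stated without proof because it follows immediately from the preceding proposition (convergence of $K_{\Phi\circ\Av,k}$ in $C^{-\infty}_W$) together with the continuity of $\iota_\Delta^*$ on $C^{-\infty}_W$ under hypothesis \eqref{WFCondition}, just as in Proposition~\ref{ConvergenceViaHeat}. Your remarks on stripping the density factor and identifying $\iota_\Delta^*(F\boxtimes F^*)\cong F\otimes F^*$ are the only bookkeeping needed, and they are routine.
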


We find the limit in the next section.



\section{Limit}\label{sLimit}

We continue with the objects defined in the previous section, in particular smooth approximations $K_{\Phi\circ \Av,k}$, $k=1,2,\dots$ to $K_{\Phi\circ \Av}$. Assuming the transversality condition \eqref{WFCondition}, we will find a formula for $\iota_\Delta^*K_{\Phi\circ\Av}$.

Define
\begin{equation*}
\mathbf{Fix}=\bigcup_{\Y\in \Fix(\pmb f)} \Y.
\end{equation*}
As a union of finitely many closed sets, $\mathbf{Fix}$ is closed. The following lemma allows us to restrict the analysis to arbitrarily small neighborhood of the fixed orbits (as expected). 

\begin{lemma}
Let $A\subset \N$ be closed and disjoint from $\mathbf{Fix}$. There is $k_0$ such that $k>k_0$ implies $\supp(\iota_\Delta^*K_{\Phi\circ \Av,k})\cap A=\emptyset$.
Thus, if $k$ is large enough and $w\in C^\infty(\N;F^*\otimes F)$ has $\supp w\subset A$, then $\langle \iota_\Delta^*K_{\Phi\circ\Av,k},w\rangle=0$.
\end{lemma}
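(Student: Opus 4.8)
The plan is to exploit the near-diagonal localization already built into the construction of $K_{\Phi\circ\Av,k}$, namely the support estimate noted just before the statement: if $(p,p')\in\supp K_{\Phi\circ\Av,k}$, then there exists $\hat g\in\hat G$ with $d\big(p,\a_{\wp(\hat g)}(p')\big)<r/k$. Pulling back by $\iota_\Delta$ sets $p'=p$, so a point $p$ can lie in $\supp(\iota_\Delta^*K_{\Phi\circ\Av,k})$ only if there is $\hat g\in\hat G$ with $d\big(p,\a_{\wp(\hat g)}(p)\big)<r/k$, which says that $p$ is within $r/k$ of its own $G$-orbit after moving along the group — more precisely, recalling that the support of $K_{\Phi\circ\Av,k}$ sits within $r/k$ of $\bigcup_\Y \Y\times f(\Y)$, the diagonal restriction is supported within $r/k$ of the set $\{p: \a_g(p)=f(p)\text{ for some }g\in G\}=\bigcup_{\Y\in\Fix(\pmb f)}\Y=\mathbf{Fix}$.

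First I would make the previous sentence precise: show that $\supp(\iota_\Delta^*K_{\Phi\circ\Av,k})\subset \{p\in\N: d(p,\mathbf{Fix})\le C/k\}$ for a constant $C$ depending only on $r$ and the geometry (the diameter of $G$-orbits in the invariant metric), using the support description of $K_{\Phi\circ\Av}$ in Section~\ref{discreteness} together with the $r/k$-estimate. The key point is that $\supp K_{\Phi\circ\Av,k}\subset\{(p,p'):d\big((p,\a_{\wp(\hat g)}p'),\Gamma\big)<r/k\text{ for some }\hat g\}$, which after imposing $p'=p$ forces $(p,f(p))$ to be within a controlled distance of the diagonal-times-something, hence $p$ near $\mathbf{Fix}$.

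Second, since $A$ is closed and disjoint from the closed set $\mathbf{Fix}$ and $\N$ is compact, $\dist(A,\mathbf{Fix})=:\delta>0$. Choose $k_0$ so that $C/k_0<\delta$; then for $k>k_0$ the neighborhood $\{p:d(p,\mathbf{Fix})\le C/k\}$ is disjoint from $A$, whence $\supp(\iota_\Delta^*K_{\Phi\circ\Av,k})\cap A=\emptyset$. The final assertion about $w$ is then immediate: if $\supp w\subset A$ and $k>k_0$, the supports are disjoint and $\langle\iota_\Delta^*K_{\Phi\circ\Av,k},w\rangle=0$.

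The only mildly delicate point — the ``main obstacle'' such as it is — is bookkeeping the passage from the $r/k$-estimate on $\supp K_{\Phi\circ\Av,k}$ in $\N\times\N$ to a clean $O(1/k)$-estimate on $\supp(\iota_\Delta^*K_{\Phi\circ\Av,k})$ relative to $\mathbf{Fix}$, keeping track of the averaging over $\hat G$ (which enlarges the relevant set by at most the $G$-orbit through the point, a set of bounded diameter) and of the fact that the relevant comparison set is $\{p:\a_g(p)=f(p)\text{ for some }g\}$, whose closure is exactly $\mathbf{Fix}$. This is entirely elementary once the support statements from Sections~\ref{discreteness} and the previous section are in hand; no new analysis is required.
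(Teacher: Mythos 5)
Your overall strategy is the paper's: combine the near-diagonal support estimate for $K_{\Phi\circ\Av,k}$ with the fact that $G$ acts by isometries, then use compactness of $A$ together with $A\cap\mathbf{Fix}=\emptyset$ to produce a uniform positive lower bound that $r/k$ eventually undercuts. One intermediate claim, however, is stronger than what you can justify, and it is the claim your choice of $k_0$ actually leans on: the inclusion $\supp(\iota_\Delta^*K_{\Phi\circ\Av,k})\subset\set{p: d(p,\mathbf{Fix})\le C/k}$. What the support estimate really gives (note that $|\gamma(p,q)|=d(f(p),q)$, so the relevant inequality is $d(f(p),\a_{\wp(\hat g)}(p'))<r/k$) is that a point $p$ with $(p,p)\in\supp K_{\Phi\circ\Av,k}$ satisfies $d(p,\a_g(f(p)))<r/k$ for some $g\in G$, i.e.\ $d(p,f(\Y_p))<r/k$. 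The function $h(p)=\inf_{g\in G}d(p,\a_g(f(p)))$ is continuous and vanishes exactly on $\mathbf{Fix}$ (your identification of its zero set is correct), but smallness of $h(p)$ does not yield $d(p,\mathbf{Fix})\le C\,h(p)$ without additional quantitative control on how $h$ degenerates near its zero set; $h$ could, for instance, vanish to second order, giving only $d(p,\mathbf{Fix})=\Oh(k^{-1/2})$. So the step ``choose $k_0$ with $C/k_0<\dist(A,\mathbf{Fix})$'' is not licensed as written.

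The repair is immediate and is exactly what the paper does: do not compare $p$ to $\mathbf{Fix}$ at all, but bound $h$ from below directly on $A$. The map $A\times G\ni(p,g)\mapsto d(p,\a_g(f(p)))$ is continuous and strictly positive, since a zero would give $f(p)\in\Y_p$ and hence $\Y_p\in\Fix(\pmb f)$, contradicting $A\cap\mathbf{Fix}=\emptyset$; by compactness it is bounded below by some $c_0>0$, and $k>r/c_0$ then forces $\supp(\iota_\Delta^*K_{\Phi\circ\Av,k})\cap A=\emptyset$. With that substitution (and the correction $d(p,\cdot)\mapsto d(f(p),\cdot)$ in the quoted support estimate) your argument is complete and coincides with the paper's; the final assertion about $w$ follows as you say.
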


\begin{proof}
We use $d$ to denote distance both on $\N$ and on $\N\times \N$, the latter with the product metric. Since $A$ is compact and $A\cap \mathbf{Fix}=\emptyset$, the function $A\times G\ni (p,g)\mapsto d(p,a_g(f(p)))\in \R$ is bounded below by a positive number $c_0$. 

Let $k>r/c_0$. If $(p,p')\in \supp K_{\Phi\circ \Av,k}$, then $|\gamma(p,\a_{\wp(\hat g')}(p'))|<r/k$ for some $g'\in G$. Therefore if $(\a_{\wp(\hat g)}(p),p) \in \supp K_{\Phi\circ \Av,k}$, then $|\gamma\big(\a_{\wp(\hat g)}(p),\a_{{\wp(\hat g')}}(p)\big)|<r/k$ for some $\hat g'$. Thus $d\big(f(\a_{\wp(\hat g)}(p)),\a_{\wp(\hat g')}(p)\big)<r/k$. But since $G$ acts by isometries, this gives $d(p,\a_{g''}(f(p)))<r/k$ for some $g''$. But  $r/k<c_0$, so $p\notin A$. The second assertion is evident given the first.
\end{proof}

By the lemma, the support of $\lim_{k\to\infty} \iota_\Delta^* K_{\Phi\circ \Av,k}$ is contained in \begin{equation*}
\Delta\cap \bigcup_{\Y\in \mathbf{Fix}(f)} \Y\times \Y. 
\end{equation*}
Pick $\Y\in \Fix(\pmb f)$. We now aim at finding 
\begin{equation}\label{FindTheLimit}
\lim_{k\to\infty}\langle \iota^*_\Delta K_{\Phi\circ \Av,k},w\rangle
\end{equation}
for an arbitrary $w =w_0\,\m\in C^\infty(\N;F\otimes F^*)$ supported in a neighborhood of $\Y$ disjoint from $\Y' \in \Fix(\pmb f)$ if $\Y'\ne \Y$. 

In view of the definition of $K_{\Phi,k}=K_{\Phi,k,0}\m$
\begin{multline*}
\langle \iota^*_\Delta K_{\Phi\circ \Av,k},w\rangle\\
=\int_\N \int_{\hat G} K_{f^*,k,0}(p,\a_{\wp(\hat g)}(p))\big\langle (I\otimes \A_{\hat g}^*)(\phi_\tensor(p,\a_{\wp(\hat g)}(p))), w_0(p)\big\rangle\,d\hat\mu(\hat g) \,d\m(p).
\end{multline*}
Defining $\mathrm w:\N \times \hat G\to \C$ by
\begin{equation*}
\mathrm w(p,\hat g)=\big\langle (I\otimes \A_{\hat g}^*)(\phi_\tensor(p,\a_{\wp(\hat g)}(p))), w_0(p)\big\rangle,
\end{equation*}
a smooth function, we rewrite the above as
\begin{equation}\label{FinallyPullBack}
\langle \iota_\Delta^* K_{\Phi^*\circ \Av,k},w_0\,\m\rangle
=\int_\N \int_{\hat G} K_{f^*,k,0}\big(p,\a_{\wp(\hat g)}(p)\big) \mathrm w(p,\hat g)\,d\hat \mu(\hat g)\,d\m(p).
\end{equation}
Pick $g_0\in G$ such that  $f_0=\a_{g_0}\circ f$ fixes $\Y$ pointwise, let $\hat g_0$ be such that $g_0=\wp(\hat g_0)$, write the right hand side in \eqref{FinallyPullBack} using $f_0$: since 
\begin{equation*}
|\gamma(p,p')|=d(f(p),p')=|\gamma(f_0(p),\a_{g_0}(p'))|
\end{equation*}
if $(p,p')\in \calW$ (see \eqref{DefOfGamma}),
\begin{equation*}
K_{f^*,k,0}(p,p')=K_{f_0^*,k,0}(p,\a_{g_0}p')
\end{equation*}
with $K_{f_0^*,k,0}$ defined as $K_{f^*,k,0}$ was but with $f_0$ in place of $f$. Recall from \eqref{cIsConstantOnY} that the normalization constant $c(p,f(p))$ used in the definition of $K_{f^*,k,0}$ is constant on orbits of $G$. With this and a change of variable of integration, \eqref{FinallyPullBack} becomes
\begin{equation}\label{Kforf_0}
\int_\N \int_{\hat G} K_{f_0^*,k,0}\big(p,\a_{\wp(\hat g)}(p)\big) \mathrm w(p,\hat g\hat g_0^{-1})\,d\hat \mu(\hat g)\,d\m(p)
\end{equation}
We will find its limit as $k\to \infty$ with the aid of a partition of unity subordinate to open sets of the kind described in the next paragraph taking advantage of the setup preceding the proof of Theorem \ref{DiscreteFixedPoints}.

\smallskip
Pick $p_0\in \Y$ and let $r$, $B$, $d=\dim\Y$, and $\X$ be as in Section \ref{discreteness}. Assume $r$ so small that $\Y\cap \X=\set{p_0}$ and $\X$ does not intersect any of the other fixed orbits of $f$. Let $\isotropy\subset G$ be the isotropy group of $p_0$, let $\preisotropy =\wp^{-1}(\isotropy)\subset \hat G$, a compact subgroup. There is a compact connected subgroup $\hat G_0\subset \hat G$ transversal to $\preisotropy$ at the identity. Since the codimension of $\preisotropy$ in $\hat G_0$ is equal to that of $\isotropy$ in $\hat G$, $G_0=\wp(\hat G_0)$ is locally isomorphic (through $\wp$) to $\hat G_0$ ($\hat G_0\to G_0$ is a finitely sheeted cover of $G_0$). Let $\hat D$ be a ball in $\hat G_0$ about the identity element with respect to some invariant metric and let $D=\wp(\hat D)$ (see Example \ref{ExampleOfAction}). Assume $\hat D$ closed under inversion and so small that the translates of $\hat D \cdot \hat D$ by the elements of $\hat G_0\cap \preisotropy$ are pairwise disjoint ($D$ is evenly covered). Further, assume $\hat D$ small enough that, as in Section \ref{discreteness}, the map $\X\times D \ni (p,g)\mapsto \a_g p\in \N$ is a diffeomorphism. Let $\calU=\set{\a_g(p):p\in \X,\ g\in D}$, $\proj:\calU\to \X$ the projection as in Section \ref{discreteness}.

Further, let $\X''=\exp_{p_0}(B'')$ where $B''\subset N_{p_0}\Y$ is the ball of radius $r''<r$, let $\hat D''\subset \hat D$ be another ball about the identity, closed under inversion and such that $\hat D''\cdot \hat D''\subset \hat D$. Finally, choose $r'<r$ and $\hat D'\subset \hat D''$ with the requirement that the image of $\calU'=\set{\a_g(p):p\in \X',\ g\in \wp(D')}$ by $f_0$ is contained in $\calU''=\set{\a_g(p):p\in \X'',\ g\in \wp(\hat D'')}$. Observe that every neighborhood of $p_0$ in $\N$ contains a neighborhood of $p_0$ like $\calU'$.

\begin{lemma}
Let $\calK\Subset \calU'$ be compact. There is $k_0$ such that if $k>k_0$, $p\in \calK$ and $K_{f_0^*,k,0}(p,\a_g(p))\ne 0 $ implies $g\in D\cdot \isotropy$.
\end{lemma}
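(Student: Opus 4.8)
The plan is to argue by contradiction, producing from a hypothetical failure a limiting relation $\a_{g_*}(p_*)=f_0(p_*)$ with $p_*\in\calK$ and $g_*\notin D\cdot\isotropy$, and then to rule this out with the slice structure of $\calU$ together with the fact that $G$ is abelian. First I would spell out the content of the hypothesis. The function $K_{f_0^*,k,0}$ is, by construction (the analogue of $K_{f^*,k}$ with $f_0$ in place of $f$ and the $\m_{\rmR}$-factor stripped), supported in the neighborhood of the graph of $f_0$ on which $\gamma_0$ is defined, and there it equals a positive multiple of $\chi_0\big(k\,|\gamma_0(p,p')|\big)$. Since $\chi_0$ vanishes outside $(-r,r)$ and $|\gamma_0(p,p')|=d(f_0(p),p')$ inside the convex ball, the hypothesis $K_{f_0^*,k,0}(p,\a_g(p))\ne0$ forces $d\big(f_0(p),\a_g(p)\big)<r/k$.

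So suppose the lemma were false. Then for each $j$ there would be $k_j>j$, a point $p_j\in\calK$, and $g_j\in G\minus(D\cdot\isotropy)$ with $d\big(f_0(p_j),\a_{g_j}(p_j)\big)<r/k_j$. The set $D\cdot\isotropy$ equals $q^{-1}(q(D))$ for the quotient map $q:G\to G/\isotropy$, and $q(D)$ is open because $q$ restricted to $G_0$ is a covering onto $G/\isotropy$; hence $D\cdot\isotropy$ is open in $G$ and $G\minus(D\cdot\isotropy)$ is compact, as is $\calK$. Passing to a subsequence I may assume $p_j\to p_*\in\calK$ and $g_j\to g_*\in G\minus(D\cdot\isotropy)$, and letting $j\to\infty$, using continuity of $f_0$ and of the action, $\a_{g_*}(p_*)=f_0(p_*)$.

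Now I would insert this into the tube decomposition of $\calU$. Since $\calK\Subset\calU'$ we have $p_*\in\calU'$, so $p_*=\a_{\beta_*}(q_*)$ with $q_*=\proj(p_*)\in\X'$ and $\beta_*\in\wp(\hat D')$; moreover $f_0(p_*)\in f_0(\calU')\subset\calU''$ by the defining property of $\calU'$ and $\calU''$, so $f_0(p_*)=\a_{\mu_*}(q_*^{f})$ with $q_*^{f}=\proj(f_0(p_*))\in\X''$ and $\mu_*\in\wp(\hat D'')$. Both $q_*$ and $q_*^{f}$ lie in $\X$ (as $\X',\X''\subset\X$), and from $\a_{g_*}(p_*)=f_0(p_*)$ we get $\a_{\mu_*^{-1}g_*\beta_*}(q_*)=q_*^{f}\in\X$; by the slice property of $\X$ (whose isotropy group at $p_0$ is $\isotropy$, see \cite{AlexandrinoBettiol2015}) this forces $\mu_*^{-1}g_*\beta_*\in\isotropy$. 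Since $G$ is abelian we may rewrite $g_*=(\mu_*\beta_*^{-1})\,h$ with $h\in\isotropy$, and, using $\hat D'\subset\hat D''$, that $\hat D''$ is closed under inversion, and that $\hat D''\hat D''\subset\hat D$,
\[
\mu_*\beta_*^{-1}\in\wp(\hat D'')\,\wp(\hat D')^{-1}\subset\wp\big(\hat D''\hat D''\big)\subset\wp(\hat D)=D .
\]
Thus $g_*\in D\cdot\isotropy$, contradicting the choice of $g_*$, and the lemma follows.

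The step requiring care is the slice-theoretic bookkeeping: one must be sure the nested data $\hat D'\subset\hat D''\subset\hat D$ and $\X'\subset\X''\subset\X$ were arranged so that $f_0(\calU')\subset\calU''\subset\calU$ and $\hat D''\hat D''\subset\hat D$, and that the equality $\a_{g_*}(p_*)=f_0(p_*)$ of points of $\N$ is faithfully transcribed into an equality inside the single slice $\X$, where the slice property applies. The other essential ingredient — easy to overlook — is the commutativity of $G$, which is precisely what lets $\mu_*h\beta_*^{-1}$ be rewritten as $(\mu_*\beta_*^{-1})h\in D\cdot\isotropy$; without it one would instead need $\isotropy$ to normalize $\hat G_0$.
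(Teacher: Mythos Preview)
Your proof is correct and uses the same slice-theoretic core as the paper's argument: both reduce to the fact that two points of the slice $\X$ related by some $\a_g$ force $g\in\isotropy$, together with the inclusion $D''\cdot D''\subset D$. The only difference is organizational: the paper argues directly, choosing $k_0$ so that $d(f_0(\calK),p')<r/k_0$ already implies $p'\in\calU''$ (possible since $f_0(\calK)\Subset\calU''$), and then runs your slice computation for \emph{every} such $p$ and $g$ rather than for a single limit point. Your contradiction-and-compactness route reaches the same slice identity $\mu_*^{-1}g_*\beta_*\in\isotropy$ at the limit; the paper simply skips the limiting step. Both invoke commutativity of $G$ in the same place, to rewrite the resulting element as something in $D\cdot\isotropy$.
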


\begin{proof}
Since $\calK \Subset \calU'$, $f_0(\calK)\Subset \calU''$, so there is $k_0$ such that $d(f_0(\calK),p')<r/k$ implies $p'\in \calU''$ whenever $k> k_0$. Assuming this of $k$, we have $p\in \calK$ and $d(f_0(p),\a_g(p))<r/k$ imply $\a_g(p)\in \calU''$, and thus $\a_g(p)=\a_{g''}(p'')$ for some $(p'',g'')\in \X''\times D''$. Since $p=\a_{g'}(\proj(p))$ for some $g'\in D'$, $\a_{gg'}(\proj(p))=\a_{g''}(p'')$, hence $p''=\a_{gg'{g''}^{-1}}(\proj(p))$. Since $p''$ and $\proj (p)$ both lie in $\X$, $gg'{g''}^{-1}\in \isotropy$. But $g'{g''}^{-1}\in D''\cdot D''\subset D$, hence $g\in D\cdot \isotropy$.
\end{proof}

It follows that if $k$ is large enough and $p\in \calK$, then $K_{f_0^*,k,0}\big(p,\a_{\wp(\hat g)}(p)\big)\ne 0$ implies $\hat g\in \hat D\cdot \preisotropy$. 

\begin{lemma}
Let $\hat g\in \hat D\cdot \preisotropy$. The factorization $\hat g=\hat g'\hat h$ with $\hat g'\in \hat D$ and $\hat h\in \preisotropy$ is unique.
\end{lemma}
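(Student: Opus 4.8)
The plan is to read uniqueness off directly from the property imposed on $\hat D$ at the moment it was chosen: $\hat D$ is closed under inversion, and the translates of $\hat D\cdot\hat D$ by the elements of $\hat G_0\cap\preisotropy$ are pairwise disjoint inside $\hat G_0$ (this is what it means for $D$ to be evenly covered).

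First I would suppose $\hat g=\hat g_1'\hat h_1=\hat g_2'\hat h_2$ with $\hat g_1',\hat g_2'\in\hat D$ and $\hat h_1,\hat h_2\in\preisotropy$, and rearrange this to $\hat g_2'^{-1}\hat g_1'=\hat h_2\hat h_1^{-1}$. Since $\hat D$ is symmetric, the left-hand side lies in $\hat D\cdot\hat D$, hence in the subgroup $\hat G_0$; the right-hand side lies in the subgroup $\preisotropy$. Therefore the common element $s:=\hat g_2'^{-1}\hat g_1'$ belongs to $\hat G_0\cap\preisotropy$.

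Next, from $\hat g_1'=\hat g_2' s$ together with $e\in\hat D\subset\hat D\cdot\hat D$, I would observe that $\hat g_1'\in\hat D\cdot\hat D=(\hat D\cdot\hat D)e$ while also $\hat g_1'=\hat g_2' s\in(\hat D\cdot\hat D)s$. Thus the two translates of $\hat D\cdot\hat D$ by the elements $e,s\in\hat G_0\cap\preisotropy$ meet; since translates by distinct elements of $\hat G_0\cap\preisotropy$ are disjoint, the translating elements cannot be distinct, i.e. $s=e$. Hence $\hat g_1'=\hat g_2'$, and then $\hat h_1=\hat g_1'^{-1}\hat g=\hat g_2'^{-1}\hat g=\hat h_2$, which is the asserted uniqueness.

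There is essentially no obstacle here; the only point needing care is to keep every intermediate product inside $\hat G_0$ so that the evenly-covered property of $D$ is applicable, and to be sure that the two translating elements invoked in that property, namely $e$ and $s$, genuinely lie in $\hat G_0\cap\preisotropy$ — which is exactly what the first step verifies. Note that abelianness of $\hat G^q$ (available from Proposition \ref{OnConnections}) is not needed for the argument, though it would let one ignore the distinction between left and right translations.
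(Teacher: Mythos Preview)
Your proof is correct and follows essentially the same route as the paper's: both arguments rearrange the two factorizations to produce an element of $(\hat D\cdot\hat D)\cap(\hat G_0\cap\preisotropy)$ and then invoke the pairwise disjointness of the $\hat G_0\cap\preisotropy$-translates of $\hat D\cdot\hat D$ to force that element to be the identity. The only cosmetic difference is that the paper applies the disjointness to the element $s$ itself (noting $s\in(\hat D\cdot\hat D)e$ and $s=e\cdot s\in(\hat D\cdot\hat D)s$), whereas you apply it to $\hat g_1'$; the content is identical.
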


\begin{proof}
Suppose $\hat g\hat h=\hat g'\hat h'$ with $\hat g,\hat g'\in \Hat D$ and $\hat h,\hat h'\in \preisotropy$. Then $\hat h^{-1}\hat h'=\hat g{\hat g'}{}^{-1}\in (\hat D\cdot \hat D)\cap \preisotropy$. Since the translates of $\hat D\cdot \hat D$ by the elements of $\preisotropy_{p_0}\cap \hat G_0$ are pairwise disjoint, $\hat g{\hat g'}{}^{-1}$ is the identity element of $\hat G_0$.
\end{proof}

We now return to the evaluation of the integral in \eqref{Kforf_0}, replacing $\mathrm w$ by $\vartheta\mathrm w$ where $\vartheta\in C_c^\infty(\calU')$ plays the role of an element of a partition of unity. We let $\hat \mu_\preisotropy$ be the normalized Haar measure on $\preisotropy$ and $\hat \mu_{\hat G_0}$ the Haar measure on $G_0$ such that $\hat\mu_{\hat G}=\hat \mu_\preisotropy\otimes \hat \mu_{\hat G_0}$. The following proposition is immediate in view of the last two lemmas. We use that $\a_{\wp(\hat h)}$ is an isometry.

\begin{proposition}
Let $\vartheta\in C_c^\infty(\calU')$. There is $k_0$ such that 
\begin{multline}\label{findLimit}
\langle \iota_\Delta^* K_{\Phi^*\circ \Av,k},\vartheta w\rangle \\=\int_{\preisotropy}\int_{\calU'}\int_{\hat D} K_{f_0^*,k,0}\big(p,\a_{\wp(\hat g\hat h)}(p)\big) \vartheta(p)\mathrm w(p,\hat g\hat h\hat g_0^{-1})\,d\hat \mu_{\hat G_0}(\hat g)\,d\m(p)\,d\mu_\preisotropy(\hat h)
\end{multline}
if $k>k_0$.
\end{proposition}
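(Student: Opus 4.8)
The plan is to insert the cutoff $\vartheta$ into \eqref{Kforf_0} and then slice the integration over $\hat G$ according to the local product structure $\hat G\approx \hat D\times\preisotropy$ supplied by the two lemmas just proved; once these are in hand the identity \eqref{findLimit} is essentially a change of variables followed by Fubini's theorem.

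Concretely, I would begin from \eqref{Kforf_0} with $\mathrm w$ replaced by $\vartheta\,\mathrm w$, and set $\calK=\supp\vartheta$, a compact subset of $\calU'$. By the first of the two preceding lemmas together with the remark following it, there is a $k_0$ — uniform over $p\in\calK$ since $\calK$ is compact — such that for $k>k_0$ the integrand $K_{f_0^*,k,0}\big(p,\a_{\wp(\hat g)}(p)\big)\,\vartheta(p)\,\mathrm w(p,\hat g\hat g_0^{-1})$ vanishes unless $\hat g\in\hat D\cdot\preisotropy$. For such $k$ the inner integral over $\hat G$ may therefore be replaced by one over $\hat D\cdot\preisotropy$.

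Next I would invoke the second lemma: on $\hat D\cdot\preisotropy$ every element factors uniquely as $\hat g'\hat h$ with $\hat g'\in\hat D$ and $\hat h\in\preisotropy$, so $(\hat g',\hat h)\mapsto\hat g'\hat h$ is a bijection $\hat D\times\preisotropy\to\hat D\cdot\preisotropy$ (I use here that $\hat G$ is abelian, so the order of the factors is immaterial); and by the normalization of $\hat\mu_{\hat G_0}$ and $\hat\mu_\preisotropy$ fixed in the setup, the restriction of $\hat\mu_{\hat G}$ to $\hat D\cdot\preisotropy$ pushes forward from $\hat\mu_{\hat G_0}\big|_{\hat D}\otimes\hat\mu_\preisotropy$ under this map. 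Carrying out the substitution $\hat g=\hat g'\hat h$ in the $\hat G$-integral, using that $\a_{\wp(\hat h)}$ is an isometry (as noted just before the statement) so that $K_{f_0^*,k,0}\big(p,\a_{\wp(\hat g'\hat h)}(p)\big)$ is exactly the term produced, renaming $\hat g'$ back to $\hat g$, and observing that $\vartheta$ is supported in $\calU'$ (so that the $p$-integral can be written over $\calU'$), an application of Fubini's theorem to the continuous, compactly supported integrand rearranges the result into the triple integral on the right-hand side of \eqref{findLimit}.

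I do not expect a genuine obstacle here: every step is elementary once the two lemmas are available. The only points that need a little care are the uniformity of $k_0$ over the compact set $\calK$ and the bookkeeping of the product decomposition $\hat\mu_{\hat G}=\hat\mu_\preisotropy\otimes\hat\mu_{\hat G_0}$ on $\hat D\cdot\preisotropy$, and both of these are exactly what the preceding lemmas and the setup were arranged to provide.
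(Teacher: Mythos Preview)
Your proposal is correct and follows exactly the route the paper takes: the paper declares the proposition ``immediate in view of the last two lemmas'' and notes only that $\a_{\wp(\hat h)}$ is an isometry, which is precisely the argument you spell out (restrict to $\hat D\cdot\preisotropy$ via the first lemma, decompose uniquely via the second, use $\hat\mu_{\hat G}=\hat\mu_\preisotropy\otimes\hat\mu_{\hat G_0}$, and apply Fubini). You have simply written out in detail what the paper leaves implicit.
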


We will rewrite \eqref{findLimit} using coordinates. Let $d$ be the dimension of $\Y$. Pick an orthonormal basis $\pmb v_1,\dots \pmb v_{n-d}$ of $N_{p_0}\Y$ and let $x^1,\dots x^{n-d}:\calU\to \R$ be such that 
\begin{equation*}
\proj(p)=\exp_{p_0}\big(\sum_j x^j(p)\pmb v_j\big), \quad p\in \X.
\end{equation*}
In particular, the $x^j$ are constant on the fibers of $\proj$. Next, identifying the ball $\hat D\subset \hat G_0$ linearly with a ball centered at $0$ in $\R^d$ (really the Lie algebra of $\hat G_0$ via the exponential map), let $\tilde y^1,\dots, \tilde y^d$ be the resulting coordinates on $\hat D$, transferred to functions $y^1,\dots,y^d$ on $\calU$  via the diffeomorphism $\calU\approx\X\times \hat D$, i.e. setting $y^j(p)=\tilde y^j(\hat g)$ if $p=\a_{\wp(\hat g)}(\proj(p))$. In these coordinates $d\mu_{\hat G_0}=\kappa|d\tilde y|$ on $\hat D$ with some constant $\kappa$. 

Using these coordinates (both on $\calU'$ and $\calU$), if $(x,y)$ corresponds to a point in $\calU'$, then $f_0(x,y)=(X_{f_0}(x,y),Y_{f_0}(x,y))$ with $X_{f_0}(x,y)$ vanishing at $x=0$ and $Y_{f_0}(0,y)=y$ (thus preserving $\Y$). Further, $X_{f_0}$ is independent of $y$, because $f_0$ commutes with $\a_g$, $g\in G_0$. For the same reason,  the function $Y_{f_0}$ is of the form $Y_{f_0}(x,y)=Y_{f_0}(x)+y$, where we are using additive notation for the operation on $G_0$ in the neighborhood $D'\subset D$ of the identity in $G_0$. Since $\Y$ is pointwise fixed by $f_0$, we further have $Y_{f_0}(0)=0$. Thus, using additive notation for the operation on $G_0$ and identifying $y$ with an element of $D$,
\begin{equation*}
f_0(x,y)=(X_{f_0}(x),Y_{f_0}(x)+y)
\end{equation*}
with $X_{f_0}$ and $Y_{f_0}$ as functions of $x$ only, both vanishing when $x=0$.

Next, if $\hat g\in \hat D$, then $\a_{\wp(\hat g)}(x,y)=(x,y+\wp(\hat g))$, and if $\hat h\in \preisotropy_{p_0}$, then $\a_{\wp(\hat h)}(x,y)=(X_{\a_{\wp(\hat h)}}(x),y)$ because $\isotropy$ preserves each slice $\a_g\X\subset \calU$, $g\in D$. 

Observe that since $G_0$ acts as isometries of the metric $\gg$, the tangent spaces of the orbits of $G_0$ and those of the slices $\a_g(\X)$ are mutually orthogonal. Therefore 
\begin{equation*}
\gg=\sum_{i,j=1}^{n-d} \gg_{ij}(x)\,dx^i\otimes dx^j+\sum_{a,b=1}^d \gg_{ab}(x)\, dy^a\otimes dy^b.
\end{equation*}
The coefficients are independent of $y$ since $\a_g^*\gg=\gg$ for all $g$.

\smallskip
 
Changing  $p$ to $\a_{\wp(\hat h^{-1})}(p)$ in \eqref{findLimit} preserves $\calU'$. Doing this and replacing the definition of $K_{f_0^*,k,0}$ the resulting integrand is 
\begin{equation}\label{findLimitIntegrand}
c(p)k^n\chi_0\big(kd(\a_{\wp(\hat h^{-1})}\circ f_0(p),\a_{\wp(\hat g)}(p))\big) \vartheta(\a_{\wp(\hat h^{-1})}(p))\mathrm w(\a_{\wp(\hat h^{-1})}(p),\hat h \hat g\hat g_0^{-1})
\end{equation}
with $p\in \calU'$, $\hat g\in \hat D$, and $\hat h\in \preisotropy$. In coordinates, with $(x,y)$ corresponding to $p$, 
\begin{equation*}
\a_{\wp(\hat h^{-1})}\circ f_0(x,y)=(X_{\a_{\wp(\hat h^{-1})}}\circ X_{f_0}(x),Y_{f_0}(x)+y)=(X_{\a_{\wp(\hat h^{-1})}\circ f_0}(x),Y_{f_0}(x)+y)
\end{equation*}
and
\begin{equation*}
\a_{\wp(\hat g)}(p)=\a_{\wp(\hat g)}(x,y)=(x,y+\tilde y),\quad \tilde y=\wp(\hat g),
\end{equation*}
(viewing $y, \tilde y \in D\approx\wp(\hat D)$) so if $p\in \calU'$ and $d(\a_{\wp(\hat h^{-1})}\circ f_0(p),\a_{\wp(\hat g)}(p))<r/k$ with $k$ large, then
\begin{multline*}
d(\a_{\wp(\hat h^{-1})}\circ f_0(p),\a_{\wp(\hat g)}(p))^2 =\\ \sum_{i,j}\gg_{ij}(x)(X_{\wp(\a_{\hat h^{-1}})\circ f_0}-x^i)(X_{\wp(\a_{\hat h^{-1}})\circ f_0}^j(x)-x^j)\\
+\sum_{a,b}\gg_{ab}(x)(Y_{f_0}^a(x)-\tilde y^a)(Y_{f_0}^b(x)-\tilde y^b)+\epsilon
\end{multline*}
where $\epsilon=\Oh(1/k^3)$ incorporates the cross terms.  The transversality assumption (Proposition \ref{WFvsDet}) gives that 
\begin{equation*}
x\mapsto z(x)=X_{\a_{\wp(\hat h^{-1})}\circ f_0}(x)-x\text{ is invertible near }x=0.
\end{equation*}
Introducing the change of variable $z=z(x)$ followed by $z =\tilde  z/k$ and $\tilde y=y'/k$ in the inner iterated pair of integrals in \eqref{findLimit} with fixed $\hat h\in \preisotropy$ and relabeling get 
\begin{multline}\label{AlmostDone}
\int_D\int_{\R^{n-d}}\int_{\R^d} c(x(\frac{z}{k}))\chi_0\big(\Psi_k(z,y'-kY_{f_0}(x(\frac{z}{k})))\big)\vartheta\big(X_{\a_{\wp(\hat h^{-1})}}(x(\frac{z}{k})),y\big) \\
\x \frac{\mathrm w\big((X_{\a_{\wp(\hat h^{-1})}}(x(\frac{z}{k})),y),\hat h\hat g_0^{-1}+\frac{y'}{k}\big) \kappa\, \m_0(x(\frac{z}{k}))}{|\det\big[JX_{\a_{\wp(\hat h^{-1})}\circ f_0}(x(\frac{z}{k}))-\Id\big]|}\,dy'\,dz\,dy.
\end{multline}
The expression $\hat h\hat g_0^{-1}+y'/k$ means $\hat h\hat g_0^{-1}\Exp(y'/k)$, $J X_{\a_{h^{-1}}\circ f_0}$ is the Jacobian matrix of the transformation $x\mapsto X_{\a_{h^{-1}}\circ f_0}(x)$ with respect to the coordinates $x^1,\dots,x^{n-d}$ both in the domain and the codomain, 
\begin{equation*}
\Psi_k(z,\tilde y)^2=\sum \gg_{ij}\big(x(\frac{z}{k})\big)z^i z^j+\sum \gg_{ab}\big(x(\frac{z}{k})\big)\tilde y^a\tilde y^b+\epsilon
\end{equation*}
now with $\epsilon=O(1/k)$, and  $d\m=\m_0(x)\,dx\,dy$ incorporating the invariance of $\m$; because of \eqref{cIsConstantOnY}, the normalization factor $c$ is independent of $y$. 

The change of variable $y'=\tilde y+kY_{f_0}(x(z/k))$ reduces the argument of $\chi_0$ to $\Psi_k(z,\tilde y)$. The integral is over a compact set. The limit of $\Psi_k(z,\tilde y)$ as $k\to\infty$ is the norm of 
\begin{equation*}
\sum_j z^j \frac{\partial}{\partial x^j}+\sum_a \tilde y^a\frac{\partial}{\partial y^a}\in T_{p_0}\N
\end{equation*}
with respect $\gg$, therefore
\begin{equation*}
\lim_{k\to\infty} c\big(x(\frac{z}{k})\big)\int \chi_0(\Psi_k(z,\tilde y))\,\m\big(x(\frac{z}{k})\big)\,d\tilde y=1.
\end{equation*}
It follows that the limit of \eqref{AlmostDone} as $k\to \infty$ is
\begin{equation*}
\int_D \vartheta(0,y)\frac{\mathrm w\big((0,y),\hat h\hat g_0^{-1}\big)}{|\det\big[JX_{\a_{\wp(\hat h^{-1})}\circ f_0}(0)-\Id\big]|}\,\kappa dy.
\end{equation*}
To rewrite this in invariant terms we begin by noting that $(0,y)$ corresponds to a point $p=\a_{\wp(\hat g)}(p_0)$ with a unique $\hat g\in \hat D$  and that $\kappa dy=d\mu_{\hat G_0}$. The denominator is the absolute value of the determinant of 
\begin{equation*}
(\a_{\wp(\hat h^{-1})}\circ f_0)^*-\Id:N^*_{p_0}\Y\to N^*_{p_0}\Y.
\end{equation*}
which is equal to that of 
\begin{equation*}
(\a_{\wp(\hat h^{-1})}\circ f_0)^*-\Id:N^*_{\a_{\wp(\hat g)}(p_0)}\Y\to N^*_{\a_{\wp(\hat g)}(p_0)}\Y, \quad \hat g\in \hat D
\end{equation*}
for our previously fixed $\hat h\in \preisotropy$. Integrating over $\preisotropy$ with respect to $\mu_\isotropy$ and recalling that $\mu=\mu_\isotropy\otimes \mu_{G_0}$ yields
\begin{multline*}
\lim_{k\to\infty} \langle \iota_\Delta^* K_{\Phi\circ \Av,k},\vartheta w\rangle =\\ \int_{\hat D} \int_{\preisotropy} \frac{\vartheta(\a_{\wp(\hat g)}(p_0))\mathrm w(\a_{\wp(\hat g)}(p_0),\hat h \hat g_0^{-1})}{|\det\big[(\a_{\wp(\hat g_0\hat h^{-1})}\circ f)^*\big|_{N^*_{\a_{\wp(\hat g)}(p_0)}\Y}-\Id\big]|}\, d\hat \mu_\preisotropy (\hat h)\,d\hat \mu_{\hat{G_0}}(\hat g).
\end{multline*}
We now use this formula with $w_0=\calI_\tensor$ which is the case of interest. Replacing $\wp(\hat g_0^{-1})=f(p_0)$, using \eqref{Requivariance} and duality we get
\begin{align*}
\mathrm w(\a_{\wp(\hat g)}(p_0),\hat h\hat g_0^{-1})&= \big\langle (I\otimes \A_{\hat h\hat g_0^{-1}}^*)(\phi_\tensor(\a_{\wp(\hat g)}(p_0),\a_{\wp(\hat g)}(f(p_0)))), \calI_\tensor(\a_{\wp(\hat g)}(p_0))\big\rangle\\
&=\big\langle (I\otimes \A_{\hat h\hat g_0^{-1}}^*)(\phi_\tensor(p_0,f(p_0))), \A_{\wp(\hat g)}^*\otimes \A_{\wp(\hat g^{-1})}\calI_\tensor(\a_{\wp(\hat g)}(p_0))\big\rangle\\
&=\big\langle (I\otimes \A_{\hat h\hat g_0^{-1}}^*)(\phi_\tensor(p_0,f(p_0))),\calI_\tensor(p_0)\big\rangle.
\end{align*}
which is independent of $\hat g\in \hat G_0$. The element $\phi_\tensor(p_0,f(p_0))$ is the tensor representation of the map $\phi(p_0):F_{f(p_0)}\to F_{p_0}$ at $p_0$, $(I\otimes \A_{\hat h\hat g_0^{-1}}^*)(\phi_\tensor(p_0,f(p_0)))$ is the tensor representation of
\begin{equation*}
\phi(p_0)\circ \A_{\hat h\hat g_0^{-1}}:F_{p_0}\to F_{p_0},
\end{equation*}
therefore 
\begin{equation*}
\mathrm w(\a_{\wp(\hat g)}(p_0),\hat h\hat g_0^{-1})=\tr(\phi(p_0)\circ \A_{\hat h\hat g_0^{-1}}).
\end{equation*}

Let now $\hat g_j\in \hat G_0$, $j=1,\dots,N$,  be such that the sets $\calU_j=\a_{\wp(\hat g_j)}(\calU')$ cover $\Y$. Starting from a locally finite partition of unity subordinate to the open set $\bigcup \calU_j'\subset \N$ we get smooth  functions $\vartheta_j$ supported on $\calU_j'$ such that $\sum \vartheta_j=1$ on $\Y$. We then obtain 

\begin{theorem}\label{TheoremC}
Let $\mathfrak k$ is the number of sheets of the covering $\wp|_{\hat G_0}:\hat G_0\to \Y$. Let $\vartheta\in C^\infty(\N)$ be supported in a neighborhood of $\Y$ disjoint all other elements of $\Fix(\pmb f)$ with $\vartheta(p)=1$ if $p\in \Y$. Then
\begin{equation*}
\lim_{k\to\infty} \langle \iota_\Delta^* K_{\Phi\circ \Av,k},\vartheta\phi_\tensor\rangle = \frac{\mu_{\hat G_0}(\hat G_0)}{\mathfrak k}\int_{\preisotropy} \frac{\tr(\phi(p_0)\circ \A_{\hat h \hat g_0^{-1}})}{\big|\det\big[(\a_{\wp(\hat g_0\hat h^{-1})}\circ f)^*\big|_{N^*_{p_0}\Y}-\Id\big]\big|}\, d\hat \mu_\preisotropy (\hat h)
\end{equation*}
where $p_0\in \Y$ is arbitrary and $\hat g_0\in \hat G_0$ is such that $a_{\wp(\hat g_0)}\circ f(p_0)=p_0$.
\end{theorem}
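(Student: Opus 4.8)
The plan is to feed the preceding proposition — which localizes the pairing to the model neighborhood $\calU'$ — together with the explicit evaluation of the limit carried out just above the statement into a partition of unity along $\Y$; the only genuinely new work is the bookkeeping of the $\mathfrak k$-sheeted covering $\wp|_{\hat G_0}\colon\hat G_0\to\Y$.

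First I would dispose of the part of $\vartheta$ away from $\Y$. Choose $\hat g_1,\dots,\hat g_N\in\hat G_0$ so that the sets $\calU_j'=\a_{\wp(\hat g_j)}(\calU')$ cover $\Y$, and pick $\chi\in C_c^\infty\big(\bigcup_j\calU_j'\big)$ with $\chi\equiv1$ on a neighborhood $V$ of $\Y$. Then $\vartheta(1-\chi)$ is supported in a closed set disjoint from $\mathbf{Fix}$: it misses $\Y$ because $V\supset\Y$, and it misses the other fixed orbits because $\supp\vartheta$ does. Hence, by the first lemma of this section, $\langle\iota_\Delta^*K_{\Phi\circ\Av,k},\vartheta(1-\chi)\phi_\tensor\rangle=0$ for all large $k$, and it suffices to compute the limit with $\vartheta$ replaced by $\vartheta\chi$, which is compactly supported in $\bigcup_j\calU_j'$ and still equals $1$ on $\Y$. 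Picking a partition of unity $\set{\vartheta_j}$ subordinate to $\set{\calU_j'}$ with $\sum_j\vartheta_j=1$ on $\supp(\vartheta\chi)$, we write $\vartheta\chi=\sum_j\vartheta_j\vartheta\chi$, each summand lying in $C_c^\infty(\calU_j')$ and satisfying $\sum_j(\vartheta_j\vartheta\chi)\big|_\Y=1$.

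Next I would apply the preceding proposition and the evaluation of the limit to each summand, working in coordinates centred at $p_j=\a_{\wp(\hat g_j)}(p_0)$. By the $G_0$-equivariance of $f_0$, of the metric $\gg$, of $\phi$ and of the averaging operator — and using that $\a_{\wp(\hat g_0)}\circ f$ fixes every point of $\Y$ while the trace and the determinant appearing below are constant along $\Y$ — this reduces to the computation at $p_0$ already done, giving
\begin{equation*}
\lim_{k\to\infty}\big\langle\iota_\Delta^*K_{\Phi\circ\Av,k},(\vartheta_j\vartheta\chi)\phi_\tensor\big\rangle
=\Big(\int_{\hat D}(\vartheta_j\vartheta\chi)\big(\a_{\wp(\hat g)}(p_j)\big)\,d\hat\mu_{\hat G_0}(\hat g)\Big)\,I_\Y ,
\end{equation*}
where
\begin{equation*}
I_\Y=\int_{\preisotropy}\frac{\tr\big(\phi(p_0)\circ\A_{\hat h\hat g_0^{-1}}\big)}{\big|\det\big[(\a_{\wp(\hat g_0\hat h^{-1})}\circ f)^*\big|_{N^*_{p_0}\Y}-\Id\big]\big|}\,d\hat\mu_{\preisotropy}(\hat h)
\end{equation*}
is independent of $j$. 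It then remains to evaluate the sum $\sum_j\int_{\hat D}(\vartheta_j\vartheta\chi)(\a_{\wp(\hat g)}(p_j))\,d\hat\mu_{\hat G_0}(\hat g)$. Setting $\psi_0(\hat g)=\a_{\wp(\hat g)}(p_0)$ and $\psi_j(\hat g)=\a_{\wp(\hat g)}(p_j)=\psi_0(\hat g\hat g_j)$, the right invariance of Haar measure shows that $(\psi_j)_*\hat\mu_{\hat G_0}$ is a $j$-independent $G_0$-invariant measure $\nu_\Y$ on $\Y$ of total mass $\mu_{\hat G_0}(\hat G_0)$; and since $\psi_j^{-1}(q)$, for $q$ near $p_j$, consists of the $\mathfrak k$ translates of one point of $\hat D$ by the elements of $\preisotropy\cap\hat G_0$ — on which $(\vartheta_j\vartheta\chi)\circ\psi_j$ takes a single value, those elements acting trivially on $p_0$ after $\wp$ — integration over $\hat D$ recovers exactly $1/\mathfrak k$ of the integral over $\hat G_0$, i.e. $\int_{\hat D}(\vartheta_j\vartheta\chi)\circ\psi_j\,d\hat\mu_{\hat G_0}=\tfrac1{\mathfrak k}\int_\Y(\vartheta_j\vartheta\chi)\,d\nu_\Y$. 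Summing over $j$ and using $\sum_j(\vartheta_j\vartheta\chi)\big|_\Y=1$ gives the value $\mu_{\hat G_0}(\hat G_0)/\mathfrak k$, whence $\lim_k\langle\iota_\Delta^*K_{\Phi\circ\Av,k},\vartheta\phi_\tensor\rangle=\big(\mu_{\hat G_0}(\hat G_0)/\mathfrak k\big)I_\Y$, which is the claimed identity.

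The analytic heart of the matter — the pull-back by $\iota_\Delta$, the change of variables $z=z(x)$ legitimized by transversality, the rescaling $z=\tilde z/k$, $\tilde y=y'/k$, and the evaluation of the limit of the rescaled integrand — is all in place before the statement. The main obstacle here is therefore purely combinatorial: keeping straight that the model-neighborhood integral runs over the small ball $\hat D$ rather than over all of $\hat G_0$, and checking that this discrepancy is compensated exactly by the sheet count $\mathfrak k$, so that the partition-of-unity sum telescopes to the single intrinsic orbit integral $I_\Y$.
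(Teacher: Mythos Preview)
Your proposal is correct and follows the same route as the paper: the paper carries out the full local computation in coordinates (everything up to and including the formula for $\lim_{k\to\infty}\langle\iota_\Delta^*K_{\Phi\circ\Av,k},\vartheta w\rangle$ with $w_0=\calI_\tensor$), then in a single sentence introduces $\hat g_j\in\hat G_0$ with $\calU_j'=\a_{\wp(\hat g_j)}(\calU')$ covering $\Y$ and a partition of unity $\set{\vartheta_j}$ with $\sum\vartheta_j=1$ on $\Y$, declaring ``We then obtain'' the theorem. Your write-up spells out what that sentence hides: the cutoff $\chi$ disposing of $\supp\vartheta\setminus\bigcup_j\calU_j'$ via the support lemma, the equivariance reducing each local contribution to the one already computed at $p_0$, and the sheet count $\mathfrak k=|\hat G_0\cap\preisotropy|$ converting $\sum_j\int_{\hat D}(\vartheta_j\vartheta\chi)\circ\psi_j\,d\hat\mu_{\hat G_0}$ into $\hat\mu_{\hat G_0}(\hat G_0)/\mathfrak k$.
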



\appendix
\section{Curvature and compact groups}\label{CurvatureAndGroups}

We return to the context and notation of Section \ref{ConnectionsGroupsAverage} to  discuss the existence of connections with curvature valued in $F\otimes \Wedge^2\Hor^*$. 

\begin{proposition}\label{GoodConnection}
Suppose $\hat {\mathcal G}=\set{\A_t:F\to F:t\in \R}$ is a smooth one-parameter group of unitary morphisms with $\A_t$ covering $\a_t$. If the closure $\hat G$ of $\hat {\mathcal G}$ in the compact-open topology of continuous vector bundle homeomorphisms $F\to F$ is a compact Lie group acting smoothly on $F$, then there is a Hermitian connection on $F$ with curvature valued in $F\otimes \Wedge^2\Hor^*$ for which $t\mapsto \A_t$ is parallel transport along $t\mapsto \a_t$.
\end{proposition}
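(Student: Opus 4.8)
The plan is to obtain $\nabla$ by averaging an arbitrary Hermitian connection over $\hat G$ and then correcting it in the flow direction. Fix a $G$-invariant Riemannian metric $\gg$ on $\N$ (one exists by hypothesis) and put $\theta=\gg(\T,\cdot)/\gg(\T,\T)$: a $G$-invariant $1$-form with $\inner_\T\theta=1$, so in particular $\a_{\wp(\hat g)}^*\theta=\theta$ for every $\hat g\in\hat G$. Pick any Hermitian connection $\nabla^0$ on $F$; such a connection exists by a partition-of-unity argument. Since $\hat G$ is a compact Lie group acting smoothly on $F$ by unitary morphisms, it acts smoothly, via the bundle automorphisms $\A_{\hat g}$, on the affine space of all connections on $F$, preserving the affine subspace of Hermitian connections; hence
\[
\nabla^1:=\nabla^0+\int_{\hat G}\big(\A_{\hat g}\!\cdot\!\nabla^0-\nabla^0\big)\,d\hat\mu(\hat g)
\]
is again a Hermitian connection, and it is $\hat G$-invariant by invariance of the Haar measure $\hat\mu$.

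This $\nabla^1$ need not have $\A_t$ as its parallel transport along the integral curves of $\T$, so next comes a correction. Let $\Lie^{\A}_\T$ be the first order operator on $C^\infty(\N;F)$ given by $(\Lie^{\A}_\T s)(p)=\tfrac{d}{dt}\big|_{t=0}\A_{-t}\big(s(\a_t(p))\big)$; by construction a section satisfies $\Lie^{\A}_\T s=0$ along an orbit of $\T$ exactly when it is of the form $t\mapsto\A_t(\eta)$ there, so a Hermitian connection has parallel transport $\A_t$ along the orbits of $\T$ if and only if its $\T$-covariant derivative equals $\Lie^{\A}_\T$. Now $\nabla^1_\T$ and $\Lie^{\A}_\T$ are first order operators with the same principal symbol, so $B:=\nabla^1_\T-\Lie^{\A}_\T$ is a bundle endomorphism of $F$; and both are metric ($\T h(u,v)=h(\nabla^1_\T u,v)+h(u,\nabla^1_\T v)$ since $\nabla^1$ is Hermitian, and likewise with $\Lie^{\A}_\T$ since the $\A_t$ are unitary), so $B$ is skew-Hermitian. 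Set
\[
\nabla:=\nabla^1-\theta\otimes B .
\]
Then $\theta\otimes B$ is a skew-Hermitian-valued $1$-form, so $\nabla$ is a Hermitian connection, and $\nabla_\T=\nabla^1_\T-(\inner_\T\theta)\,B=\nabla^1_\T-B=\Lie^{\A}_\T$; thus the parallel transport of $\nabla$ along $t\mapsto\a_t(p)$ is $\A_t$.

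It remains to check the curvature condition. First, $\nabla$ is still $\hat G$-invariant: $\nabla^1$ is; $\theta$ is invariant under every $\a_{\wp(\hat g)}$; and $B$ is $\hat G$-invariant because $\hat G$ is \emph{abelian}, so the action of $\hat g$ on sections commutes both with $\Lie^{\A}_\T$ (as $\A_{\hat g}\A_t=\A_t\A_{\hat g}$) and with $\nabla^1_\T$ (by $\hat G$-invariance of $\nabla^1$ and $(\a_{\wp(\hat g)})_*\T=\T$). In particular $\nabla$ is invariant under the flow, which in operator form reads $\pmb\A_t\circ\nabla_Y\circ\pmb\A_{-t}=\nabla_{(\a_t)_*Y}$ for every vector field $Y$. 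Differentiating this at $t=0$ and using $\nabla_\T=\Lie^{\A}_\T$ gives $[\nabla_\T,\nabla_Y]=\nabla_{[\T,Y]}$, whence $\inner_\T\Omega(Y)=[\nabla_\T,\nabla_Y]-\nabla_{[\T,Y]}=0$ for all $Y$. Since $\Wedge^2\Hor^*=\ker\inner_\T$, the curvature $\Omega$ of $\nabla$ is valued in $F\otimes\Wedge^2\Hor^*$. Thus $\nabla$ has all the required properties.

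The main obstacle is not any single hard estimate but coordinating the three demands: the correction $\theta\otimes B$ must keep $\nabla$ Hermitian, realize $\A_t$ exactly as parallel transport, and not destroy the $\hat G$-invariance gained from the averaging step. The invariance is the delicate one, and it survives the correction precisely because $\hat G$ is abelian, so that $\A_{\hat g}$ commutes with $\A_t$ and hence with $\Lie^{\A}_\T$. Apart from that, the only care required is routine: checking that averaging a smooth $\hat G$-family of Hermitian connections yields a smooth Hermitian connection, and keeping track of signs in the differentiation that produces $[\nabla_\T,\nabla_Y]=\nabla_{[\T,Y]}$.
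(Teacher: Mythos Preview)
Your proof is correct and follows the same overall strategy as the paper's---modify an arbitrary Hermitian connection by averaging over $\hat G$ and by a correction in the $\T$-direction---but with the two steps in the opposite order. The paper first corrects an arbitrary Hermitian connection $\nabla'$ to $\nabla^\one=\nabla'-\nabla'_\T\otimes\theta+\Lie_\T\otimes\theta$ so that $\nabla^\one_\T=\Lie_\T$, and then averages the family $\nabla^{\hat g}=\pmb{\calA}_{\hat g}\circ\nabla^\one\circ\pmb{\A}_{\hat g^{-1}}$, verifying that each $\nabla^{\hat g}$ still satisfies $\nabla^{\hat g}_\T=\Lie_\T$; you average first to obtain a $\hat G$-invariant $\nabla^1$ and then correct by $\theta\otimes B$, verifying that the correction is itself $\hat G$-invariant. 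In both arguments the crucial point---that the property gained in the first step survives the second---rests on $\hat G$ being abelian, exactly as you isolate. Your derivation of the curvature condition is somewhat cleaner than the paper's: there a local frame with $\Lie_\T\eta_\mu=0$ is chosen, the connection forms are shown to satisfy $\a_t^*\omega^\nu_\mu=\omega^\nu_\mu$, and Cartan's formula then gives $\inner_\T d\omega^\nu_\mu=0$; your operator identity $[\nabla_\T,\nabla_Y]=\nabla_{[\T,Y]}$ yields $\Omega(\T,Y)=0$ directly and without frames.
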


\begin{proof}
The group $\hat G$ is isomorphic to a torus. As before, we let $\A_{\hat g}$ denote the morphisms associated to $\hat g$ but still write $\A_t$ for the originally given morphisms. Using that $\A_t$ covers $\a_t$ and density we again get a surjective map $\wp:\hat G\to G$ such that \eqref{wp} commutes. 

Let $\Lie_\T:C^\infty(\N;F)\to C^\infty(\N;F)$ be defined by 
\begin{equation*}
(\Lie_\T u)(p)=\frac{d}{dt}\Big|_{t=0}\A_{-t}(u(\a_t(p))).
\end{equation*}

There is a Hermitian connection $\nabla^\one$ on $F$ for which the given family of unitary morphisms, $\A_t:F_p\to F_{\a_t p}$, is parallel transport along the curve $t\mapsto \a_t(p)$. Namely pick an arbitrary Hermitian connection $\nabla'$ and define the operator  by 
\begin{equation*}
\nabla^\one:C^\infty(\N;F)\to C^\infty(\N;F\otimes T^*\N),\quad \nabla^\one u=\nabla'u-\nabla'_\T u\otimes \theta+\Lie_\T u\otimes \theta,
\end{equation*}
where $\theta$ is the one-form $\T\contr \gg$ defined in \eqref{PseudoConnection}. It is easy to check that $\nabla^\one$ is again a Hermitian connection, and as easily seen,
\begin{equation}\label{HorizontalIsLie}
\nabla^\one_\T u = \Lie_\T u.
\end{equation}
This implies that $\A_t$ is parallel transport along $\a_t$.

If $u$ is a section of $F$, then $\pmb \A_{\hat g} u$ will mean the section
\begin{equation*}
p\mapsto \A_{\hat g}\big (u(\a_{\wp(\hat g)}^{-1}(p))\big),
\end{equation*}
thus $(\pmb \A_{\hat g}(u))(\a_{\wp(\hat g)}(p))=\A_{\hat g}(u(p))$. 
Define 
\begin{equation*}
\calA_{\hat g}:F\otimes T^*\N\to F\otimes T^*\N,\quad \calA_{\hat g}=\A_{\hat g}\otimes \a_{{\wp(\hat g)}^{-1}}^*;
\end{equation*}
this is a bundle morphism covering $\a_{\wp(\hat g)}$. The corresponding map on sections is
\begin{equation*}
\pmb {\calA}_{\hat g}:C^\infty(\N;F\otimes \T^*\N)\to C^\infty(\N;F\otimes \T^*\N), \quad \pmb {\mathcal A}_{\hat g}(u)(p)=\mathcal A_{\hat g}(u(\a_{\wp(\hat g^{-1})}(p))
\end{equation*}
 Using the connection $\nabla^\one$ constructed above, let
\begin{equation*}
\nabla^{\hat g}=\pmb \calA_{\hat g}\circ\nabla^ \one\circ\pmb \A_{\hat g^{-1}},\quad \hat g\in \hat G.
\end{equation*}
This is again a Hermitian connection. 
The simplest proof is using frames. Let $\eta_\mu$ be a frame of $F$ over some open set $\calU$ with connection forms $\omega^\nu_\mu$. Let $\tilde \eta_\mu=\A_{\hat g}\eta_\mu$, a frame over $\a_{\wp(\hat g)}(\calU)$.  Then
\begin{equation*}
\nabla^\one \A_{\hat g^{-1}}\tilde \eta_\mu=\nabla^\one \eta_\mu=\eta_\rho\otimes \omega^\rho_\mu
\end{equation*}
so 
\begin{equation*}
\nabla^{\hat g}\tilde \eta_\mu= \calA_{\hat g} \nabla^\one \A_{\hat g^{-1}}\tilde \eta_\mu=\A_{\hat g}\eta_\rho\otimes \a_{\wp(\hat g)^{-1}}^*\omega^\rho_\mu=\tilde \eta_\rho\otimes \a_{\wp(\hat g)^{-1}}^*\omega^\rho_\mu.
\end{equation*}
On the other hand, since $\A_{\hat g}$ is unitary, $h_p(\tilde \eta_\mu,\tilde \eta_\nu)=h_{\a_{\wp(\hat g^{-1})}p}(\eta_\mu,\eta_\nu)$, that is, $h(\tilde \eta_\mu,\tilde \eta_\nu)=\a_{\wp(\hat g)^{-1}}^*h(\eta_\mu,\eta_\nu)$, so, since $\nabla^\one$ is Hermitian,
\begin{align*}
dh(\tilde \eta_\mu,\tilde \eta_\nu)=\a_{\wp(\hat g)^{-1}}^*dh(\eta_\mu,\eta_\nu)
&=\a_{\wp(\hat g)^{-1}}^*[h(\nabla^\one \eta_\mu,\eta_\nu)+h(\eta_\mu,\nabla^\one\eta_\nu)] \\
&=\a_{\wp(\hat g)^{-1}}^*[h(\eta_\rho,\eta_\nu)\omega^\rho_\mu+h(\eta_\mu,\eta_\rho)\overline \omega ^\rho_\nu]\\
&=h(\tilde \eta_\rho,\tilde \eta_\nu)\a_{\wp(\hat g)^{-1}}^*\omega^\rho_\mu+h(\tilde \eta_\mu,\tilde \eta_\rho)\a_{\wp(\hat g)^{-1}}^*\overline \omega ^\rho_\nu\\
&=h(\nabla^{\hat g} \tilde \eta_\mu,\tilde \eta_\nu)+h(\tilde \eta_\mu,\nabla^{\hat g}\tilde \eta_\nu).
\end{align*}

We also have $\nabla^{\hat g}_\T=\Lie_\T$. Namely, if $u\in C^\infty(\N;F)$, then 
\begin{multline*}
\nabla^{\hat g}_\T u = \langle (\A_{\hat g}\otimes \a_{{\wp(\hat g)}^{-1}}^*)\circ\nabla^\one \A_{\hat g}^{-1} u,\T\rangle=\A_{\hat g}\circ\nabla^\one_{d\a_{{\hat g}^{-1}}\T} \A_{\hat g}^{-1} u\\
=\A_{\hat g}\circ\nabla^\one_\T \A_{\hat g}^{-1} u= \A_{\hat g}
\Lie_\T\A_{\hat g}^{-1} u=\Lie_\T u
\end{multline*}
using $\Lie_\T\A_{\hat g}^{-1}=\A_{\hat g}^{-1}\Lie_\T$. Thus $t\mapsto \A_t$ is paralel transport along $t\mapsto \a_t$ also relative to $\nabla^{\hat g}$. Also, clearly
\begin{equation*}
\pmb \calA_{\hat g'}\circ\nabla^{\hat g}\circ\pmb \A_{\hat g'}^{-1}=\nabla^{\hat g\hat g'}
\end{equation*}
for any $\hat g,\hat g'\in \hat G$. 

Let $\hat \mu$ denote the normalized Haar measure of $\hat G$. Given $u\in C^\infty(\N;F)$, let 
\begin{equation*}
(\nabla u)(p)=\int_{\hat G} (\nabla^{\hat g}u)(p)\,d\hat \mu(\hat g).
\end{equation*}
This is: $\hat g\mapsto (\nabla^{\hat g}u)(p)$ is a map $\hat G\to F_p\otimes T_p^*\N$, and its integral is defined to be $(\nabla u)(p)$. We show that $\nabla$ is a connection. The map 
\begin{equation*}
\N\times \hat G \ni (p,\hat g)\mapsto (\nabla^{\hat g}u)(p) \in F\otimes T^*\N
\end{equation*}
is smooth, so $\nabla u$ is a smooth section of $F\otimes T^*\N$ if $u$ is smooth. Furthermore, if $f\in C^\infty(\N)$, then
\begin{equation*}
\nabla^{\hat g}(f u)=f\nabla^{\hat g}(u)+u\otimes df,
\end{equation*}
so
\begin{align*}
(\nabla fu)(p)&=\int_{\hat G} \big((f(p)\nabla^{\hat g}u)(p)+u(p)\otimes df(p)\big)\,d\hat \mu(\hat g)\\
&=f(p)(\nabla u)(p)+u(p)\otimes df(p).
\end{align*}
Thus $\nabla$ is indeed a connection. It inherits from the $\nabla^{\hat g}$ the property that $\nabla_\T=\Lie_\T$. Also, again with smooth $u$,
\begin{multline*}
\pmb \calA_{\hat g'}(\nabla u)(\a_{\wp(\hat g')}(p))
=\calA_{\hat g'}(\nabla u(p))
=\calA_{\hat g'}\big( \int_{\hat G} (\nabla^{\hat g}u)(p)\,d\hat \mu(\hat g)\big)\\
= \int_{\hat G} \calA_{\hat g'}\big((\nabla^{\hat g}u)(p)\big)\,d\hat \mu(\hat g)
= \int_{\hat G} \calA_{\hat g'}\big((\nabla^{\hat g}\pmb \A_{\hat g'}^{-1}\pmb \A_{\hat g'}u)(p)\big)\,d\hat \mu(\hat g)\\
= \int_{\hat G} \big( (\nabla^{\hat g\hat g'}\pmb \A_{\hat g'}u)(\a_{\wp(\hat g')}(p))\big)\,d\hat \mu(\hat g)
= \int_{\hat G} \big( (\nabla^{\hat g}\pmb \A_{\hat g'}u)(\a_{\wp(\hat g')}(p))\big)\,d\hat \mu(\hat g)\\
=(\nabla\pmb \A_{\hat g'}u )(\a_{\wp (\hat g')(p)})
\end{multline*}

In particular,
\begin{equation*}
\nabla u(p)=(\pmb \calA_{-t}\nabla\pmb \A_t u)(p).
\end{equation*}

Let $\eta_\mu$ be a frame of $F$ near $p_0$ such that $\Lie_\T \eta_\mu=0$. So $\eta_\mu(\a_t p)=\A_t(\eta_\mu(p))$ for small $t$ and $p$ near $p_0$. Let the $\omega^\nu_\mu$ denote the connection forms of $\nabla$ with respect to this frame. Then
\begin{multline*}
\sum_\nu \eta_\nu\otimes \omega^\nu_\mu=\nabla \eta_\mu=  \calA_{-t} \nabla \A_t\eta_\mu=\calA_{-t} \nabla \eta_\mu\\=\sum_\nu (\A_{-t}\otimes \a_t^*)(\eta_\nu\otimes \omega^\nu_\mu)=\sum_\nu \A_{-t}\eta_\nu\otimes \a_t^*\omega^\nu_\mu = \sum_\nu\eta_\nu\otimes \a_t^*\omega^\nu_\mu.
\end{multline*}
It follows that $\omega^\nu_\mu=\a_t^*\omega^\nu_\mu$ for all $\nu,\mu$. Thus $\Lie_\T\omega^\nu_\mu=0$, which implies $\inner_\T d\omega^\nu_\mu=0$ for all $\nu,\mu$. Thus $\inner_\T \Omega=0$. 
\end{proof}


\section*{}

\textbf{Acknowledgments.} Hartmann would like to thank the hospitality of the Department of Mathematics at Temple University for its hospitality during several visits. Likewise, Mendoza thanks the hospitality of the Departamento de Matem\'atica of Universidade Federal de S\~ao Carlos.


\bibliography{SingularAtiyahBott}
\bibliographystyle{amsplain}

\end{document}